\newtheorem{thm}{Theorem}
\newtheorem{cor}[thm]{Corollary}
\newtheorem{lem}[thm]{Lemma}
\newtheorem{prop}[thm]{Proposition}
\newtheorem{defn}[thm]{Definition}
\theoremstyle{definition}
\newcommand{\rr}{\mathbb{R}}
\newcommand{\nn}{\mathbb{N}}
\newcommand{\ee}{\varepsilon}
\newcommand{\ff}{\mathcal{F}}
\newcommand{\g}{\mathcal{G}}
\newcommand{\meg}{\geqslant}
\newcommand{\mik}{\leqslant}
\begin{document}

\title[The structure of the higher order spreading models]{On the structure of the set of higher order spreading models}

\author{B\"unyamin Sar\i\ and Konstantinos Tyros}

\address{Department of Mathematics, University of North Texas, Denton, TX 76203--5017 }
\email{bunyamin@unt.edu}

\address{Mathematics Institute, University of Warwick, Coventry, CV4 7AL, UK}
\email{k.tyros@warwick.ac.uk}
\thanks{The first author was partially supported by Simons Foundation Grant \#208290.}

\thanks{The second author was partially supported by ERC grant 306493}

\thanks{\textit{Key words}: Banach spaces, asymptotic structure, spreading model.}

\thanks{2010 \textit{Mathematics Subject Classification}: 46B06, 46B25, 46B45}

\begin{abstract}
We generalize some results concerning the classical notion of a spreading model for the spreading models of order $\xi$. Among them, we prove that the set $SM_\xi^w(X)$ of the $\xi$-order spreading models of a Banach space $X$ generated by subordinated weakly null $\ff$-sequences endowed with the pre-partial order of domination is a semi-lattice.
Moreover, if $SM_\xi^w(X)$  contains an increasing sequence of length $\omega$ then it contains an increasing sequence of length $\omega_1$.
Finally, if $SM_\xi^w(X)$ is uncountable, then it contains an antichain of size the continuum.
\end{abstract}
\maketitle

\section{Introduction}
In 1974, A. Brunel and L. Sucheston introduced the notion of a spreading model, a notion that possesses a quite central role in the asymptotic Banach space theory (see for example \cite{AK,Kr,OS}). A higher order extension of this notion has been introduced and studied in \cite{AKT2} and \cite{AKT1}. In particular, for every countable ordinal $\xi$, the $\xi$-order spreading models of a Banach space $X$ are defined. The order one spreading models coincide with the classical ones. In this note, we extend some of the results concerning the structure of the set of classical spreading models to the setting of the $\xi$-order ones. Consider the set $SM_w(X)$ of (equivalence classes of) spreading models of a Banach space $X$ generated by weakly null sequences endowed with the partial order given by the domination of bases. This partially ordered set is proven to have interesting features:

(i) Every countable subset of $SM_w(X)$ admits an upper bound in the set, in particular, $SM_w(X)$ is an upper semi-lattice \cite{AOST};

(ii) Existence of an increasing sequence (of length $\omega$) in $SM_w(X)$ yields the existence of an increasing sequence of length $\omega_1$ \cite{Sa};

(iii) Suppose $X$ is separable. If $SM_w(X)$ is uncountable, then it contains an antichain of size the continuum. If $SM_w(X)$ contains a decreasing sequence of length $\omega_1$ then it contains an increasing sequence of length $\omega_1$. If $SM_w(X)$ does not contain any infinite increasing sequence then there exists $\zeta<\omega_1$ such that $SM_w(X)$ contains no decreasing sequence of length $\zeta$ \cite{Do}.

We show that for every $\xi<\omega_1$, these results extend to the set $SM_\xi^{w}(X)$ of $\xi$-order spreading models of $X$ generated by subordinated weakly null $\ff$-sequences. Subordinated $\ff$-sequences are higher order analogue of the ordinary weakly convergent sequences.

A brief background to the higher order spreading models and some new facts regarding subordinated $\ff$-sequences are given in Section  \ref{section_prel} and \ref{section_subord}. Each of the following Sections \ref{section_AOST}, \ref{section_S} and \ref{section_D} is devoted to the proof of the generalization of the results (i), (ii), and (iii) mentioned above, respectively.  In particular, the main results of the paper are Corollary \ref{long_chains} and Theorems \ref{lattice}, \ref{lattice_countable}, \ref{thm_gen_D} and \ref{thm_gen_D_2}.

It is worth to mention that in general $SM_\xi^{w}(X)$ does not coincide with $SM_1^{w}(X)$, and therefore the transfinite hierarchy $(SM_\xi^{w}(X))_{\xi<\omega_1}$ is not trivial. In fact, for every $k$ there exists a Banach space $X$ such that $SM_k^{w}(X)$ is a proper subset of $SM_{k+1}^{w}(X)$ \cite{AKT1}. Moreover, there are reflexive Banach spaces $X$ and $Y$ which have, up to equivalence, the same set of spreading models of the first order but not of the second order. In Section \ref{section_subord} we also recall these known examples.

\section{The $\xi$-order spreading models of a Banach space}\label{section_prel}
By
$\nn=\{1,2,\ldots\}$ we denote the set of all positive integers.
We will use capital letters $L,M,N,...$
to denote infinite subsets and lower case letters $s,t,u,...$ to
denote finite subsets of $\nn$. For every infinite subset $L$ of
$\nn$, $[L]^{<\infty}$ (resp. $[L]^\infty$) stands for the set of
all finite (resp. infinite) subsets of $L$. For an
infinite subset $L=\{l_1<l_2<...\}$ of $\nn$ and a positive integer
$k\in\nn$, we set $L(k)=l_k$. Similarly, for a finite subset
$s=\{n_1<..<n_m\}$ of $\nn$ and  for $1\mik k\mik m$ we set
$s(k)=n_k$. For an infinite subset $L=\{l_1<l_2<...\}$ of $\nn$ and a finite
subset $s=\{n_1<..<n_m\}$ (resp. for an infinite subset
$N=\{n_1<n_2<...\}$ of $\nn$), we set
$L(s)=\{l_{n_1},...,l_{n_m}\}=\{L(s(1)),...,L(s(m))\}$ (resp.
$L(N)=\{l_{n_1},l_{n_2},...\}=\{L(N(1)), L(N(2)),...\}$).


The $\ff$-spreading models are generated by sequences indexed by the elements of
a regular thin family, which we are about to define. Let
$\mathcal{R}$ be a family of finite subsets of $\nn$. The family $\mathcal{R}$
is called \emph{compact} if the set of all characteristic functions of the
elements of $\mathcal{R}$ forms a compact subset of the set of all functions
from $\nn$ into $\{0,1\}$ endowed with the product topology. The family
$\mathcal{R}$ is called \emph{hereditary} if it contains every subset of
its elements. The family $\mathcal{R}$ is called \emph{spreading} if for
every $s\in \mathcal{R}$ and $t\in[\nn]^{<\infty}$ of the same cardinality,
say $n$, such that $s(i)\mik t(i)$ for all $1\mik i\mik n$, we have that $t$
also belongs to $\mathcal{R}$.
A family of finite subsets of $\nn$ is called \emph{regular} if it is
compact, hereditary and spreading. Finally, a family
$\ff$ of finite subsets of $\nn$ is called \emph{regular thin}, if it consists of the
maximal elements, under inclusion, of some regular family $\mathcal{R}$. A family
$\mathcal{H}$ of finite subsets of $\nn$ is called \emph{thin} if there are no
$s$ and $t$ in $\mathcal{H}$ such that $s$ is proper initial segment of $t$.
Clearly every regular thin family is also thin.
A brief presentation of the regular and the regular thin families as well as the relation
between them can be found in Section 2 of
\cite{AKT2}.
Finally, given a regular thin family $\ff$,
an $\ff$-sequence in a Banach space is a sequence of the form
$(x_s)_{s\in\ff}$ indexed by $\ff$, while an $\ff$-subsequence is a sequence
of the form $(x_s)_{s\in\ff\upharpoonright L}$ indexed by $\ff\upharpoonright L$,
where $L$ is an infinite subset of $\nn$ and the restriction
$\ff\upharpoonright L$ of $\ff$ on $L$ is defined by
\begin{equation}
\label{eq001}
\ff\upharpoonright L =\{s\in\ff:s\subseteq L\}.
\end{equation}
The connection between the $\ff$-spreading models and the $\ff$-sequences generating
them is described by the notion of plegma families.
\begin{defn}\label{defn plegma}
    Let $l$ be a positive integer and $s_1,...,s_l$ nonempty finite subsets of $\nn$.
    The $l-$tuple $(s_j)_{j=1}^l$ is called a \textit{plegma}
  family if the following are satisfied.
    \begin{enumerate}
      \item[(i)] For every $i,j$ in $\{1,...,l\}$ and $k$ in $\nn$ with $i<j$ and $k\mik\min(|s_i|,|s_j|)$, we have that $s_i(k)<s_j(k)$.
      \item[(ii)] For every $i,j$ in $\{1,...,l\}$ and $k$ in $\nn$ with $k\mik \min (|s_i|,|s_j| -1)$, we have that $s_i(k)<s_j(k+1)$.
    \end{enumerate}
  \end{defn}

  For instance,
a pair of doubletons $(\{n_1,m_1\},\{n_2,m_2\})$ is plegma if and only if $n_1<n_2<m_1<m_2$.
More generally for two non empty $s,t\in [\nn]^{<\infty}$ with
$|s|\mik |t|$ the pair   $(s,t)$ is a plegma pair if and only if
$s(1)<t(1)<s(2)<t(2)<...<s(|s|)<t(|s|)$.

The properties of the plegma families are explored in Section 3 of \cite{AKT2}.
Moreover, for every regular thin family $\ff$, every infinite
subset $L$ of $\nn$ and every positive integer $k$ we set
\begin{equation}
  \label{eq002}
  \mathrm{Plm}_k(\ff\upharpoonright L)=\big\{(s_i)_{i=1}^k:s_1,...,s_k\in \ff\upharpoonright L\text{
  such that }(s_i)_{i=1}^k\text{ plegma}\big\}.
\end{equation}
Now we are ready to state the definition of the $\ff$-spreading models.
\begin{defn}\label{Definition_of_spreading_model}
   Let   $X$ be a Banach space, $\ff$ a regular thin family and
   $(x_s)_{s\in\ff}$ an $\ff$-sequence in $X$. Also let $(E,\|\cdot\|_*)$ be an infinite dimensional
   seminormed linear space with Hamel basis
   $(e_n)_{n}$. Finally, let $M$ in $[\nn]^\infty$ and $(\delta_n)_n$
   be a null sequence of positive real numbers.

  We say that the $\ff$-subsequence
  $(x_s)_{s\in\ff\upharpoonright M}$ generates  $(e_n)_{n}$ as an
  $\ff$-spreading model \big(with respect to $(\delta_n)_n$\big) if for every $l,k$ in $\nn$ with $1\mik k\mik l$, every finite sequence $(a_i)_{i=1}^k$  in $[-1,1]$ and every
$(s_j)_{j=1}^k$ in $\mathrm{{Plm}}_k(\ff\upharpoonright M)$
  with $s_1(1)\geq M(l)$, we have
  \begin{equation}
  \label{eq003}
  \Bigg{|}\Big{\|}\sum_{j=1}^k a_j x_{s_j}\Big{\|}-\Big{\|}\sum_{j=1}^k a_j
  e_j\Big{\|}_* \Bigg{|}\mik\delta_l.
  \end{equation}
We also say that $(x_s)_{s\in\ff}$ admits $(e_n)_{n}$ as an
$\ff$-spreading model
   if there exists  $M$ in $[\nn]^\infty$ such that  $(x_s)_{s\in\ff\upharpoonright M}$ generates  $(e_n)_{n}$ as an
  $\ff$-spreading model.

  Finally, for a subset $A$ of $X$,  $(e_n)_{n}$ is an $\ff$-spreading model of $A$ if there exists an $\ff$-sequence $(x_s)_{s\in\ff}$ in $A$ which admits $(e_n)_{n}$ as an $\ff$-spreading model.
\end{defn}

The existence of the $\ff$-spreading models is established by Theorem 4.5 in \cite{AKT2}.
For every regular family $\mathcal{R}$ its \emph{order} $o(\mathcal{R})$ is defined to be the rank
of the partially ordered set $\mathcal{R}$ endowed with the reverse inclusion.
By the compactness of $\mathcal{R}$ its order is well defined and it is a countable
ordinal number (see also Section 2 of \cite{AKT2}). The \emph{order} $o(\ff)$ of a regular thin family
$\ff$ is defined to be the order of the regular family for which $\ff$ is the set of maximal elements under inclusion. It turns out that the order of the regular thin family $\ff$ involved
in the definition of the $\ff$-spreading models is the important feature in the following sense.
By Corollary 4.7 from \cite{AKT2}, for every subset $A$ of a Banach space
and every pair $\ff,\g$ of regular thin families of the same order, a
sequence $(e_n)_n$ is an $\ff$-spreading model of $A$ if and only if it is a
$\g$-spreading model of $A$. This fact gives rise to the following definition.
\begin{defn}
  Let $A$ be a subset of a Banach space $X$ and $\xi\meg 1$ be
  a countable ordinal. We say that $(e_n)_{n}$ is a $\xi$-order spreading model of $A$ if there
  exists a regular thin family $\ff$ with $o(\ff)=\xi$ such that
  $(e_n)_{n}$ is an $\ff$-spreading model of $A$.
\end{defn}

An $\ff$-subsequence $(x_s)_{s\in\ff\upharpoonright L}$ in a Banach
space $X$ is defined to be \emph{weakly null} if for every $x^*$ in the dual of $X$
and every $\ee>0$ there exists some $n_0$ such that for every
$s\in\ff\upharpoonright  L$ with $\min s\meg n_0$, we have that $|x^*(x_s)|<\ee$.
For a regular thin family $\ff$ its closure
is defined by
\begin{equation}
\label{eq004}
\widehat{\ff}=\{s\in[\nn]^{<\infty}:\text{ there exists }t\in \ff\text{ such that }
s\subseteq t\}.
\end{equation}
Identifying each subset of $\nn$ with its characteristic map and
endowing $\{0,1\}^\nn$ with the product topology, we have that $\ff$
(resp. $\ff\upharpoonright L$) is a discrete subset of $\{0,1\}^\nn$ and
$\widehat{\ff}$ (resp. $\widehat{\ff}\upharpoonright L$) is its
topological closure.
\begin{defn}
Let $\ff$ be a regular thin family and $M$ an infinite subset of $\nn$. Let $(x_s)_{s\in\ff}$ an $\ff$-sequence in a Banach space $X$.
We say that the $\ff$-subsequence $(x_s)_{s\in\ff\upharpoonright L}$ is
subordinated (with respect to the weak topology)
if there exists a continuous map $\widehat{\varphi}:\widehat{\ff}\upharpoonright L\to X$,
where $X$ is considered with the weak topology, such that $x_s=\widehat{\varphi}(s)$ for all
$s\in \ff\upharpoonright L$.
\end{defn}
It is easy to see that
a subordinated $\ff$-subsequence $(x_s)_{s\in\ff\upharpoonright L}$
is weakly convergent to $\widehat{\varphi}(\emptyset)$, where $\widehat{\varphi}$ is
the map witnessing the fact that $(x_s)_{s\in\ff\upharpoonright L}$ is subordinated.
Thus a subordinated $\ff$-sequence $(x_s)_{s\in\ff\upharpoonright L}$ is weakly null if and only
if $\widehat{\varphi}(\emptyset)=0$.
\begin{defn}
Let $X$ be a Banach space and $\xi$ a countable ordinal. By
$SM_\xi^w(X)$ we denote the set of all $\xi$-order spreading models
generated by subordinated weakly null $\ff$-subsequences in $X$,
for some $\ff$ regular thin family of order $\xi$.
\end{defn}

\section{On subordinated weakly null $\ff$-sequences}\label{section_subord}
Our first goal is the following strengthening of Theorem 3.17
from \cite{AKT2}.
\begin{thm}\label{continuous_plegma_embending}
  Let $\ff$ and $\g$ be regular thin families with $o(\ff)\mik o(\g)$. Then for every
  infinite subsets $M$ and $N$ of $\nn$ there exist $M'$ in $[M]^\infty$,
  $N'$ in $[N]^\infty$ and a continuous map
  $\widehat{\varphi}:\widehat{\g}\upharpoonright N'\to \widehat{\ff}\upharpoonright M'$ satisfying the following. Let $\varphi$ be the restriction of $\widehat{\varphi}$
  on $\g \upharpoonright N'$. Then $\varphi$ is a plegma preserving map
  onto $\ff\upharpoonright M'$ such that $\min\varphi(t)\meg M'(l)$
  for every $l\in\nn$ and
  $t\in \g \upharpoonright N'$ with $\min t\meg N'(l)$.
\end{thm}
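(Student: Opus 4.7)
I plan to bootstrap from Theorem 3.17 of \cite{AKT2}. Apply that theorem to $\ff,\g,M,N$ to obtain $M_0 \in [M]^\infty$, $N_0 \in [N]^\infty$, and a continuous map $\widehat{\varphi}_0 : \widehat{\g} \upharpoonright N_0 \to \widehat{\ff} \upharpoonright M_0$ whose restriction $\varphi_0$ to $\g \upharpoonright N_0$ is plegma-preserving and onto $\ff \upharpoonright M_0$. The only property missing from Theorem 3.17 is the pointwise growth bound, and the strategy is to thin $M_0$ and $N_0$ further to install it without disturbing the other properties. A preliminary observation is that $\widehat{\varphi}_0(\emptyset) = \emptyset$: if $i_0 \in \widehat{\varphi}_0(\emptyset)$ then continuity would force $i_0 \in \varphi_0(t)$ for every $t \in \g\upharpoonright N_0$ with $\min t$ sufficiently large, contradicting Definition \ref{defn plegma}(i) applied to a plegma pair of such $t$'s (passing to an infinite subset of $N_0$ if needed).

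The key tool is then continuity of $\widehat{\varphi}_0$ at $\emptyset$ in the product topology on $\{0,1\}^\nn$: it furnishes a modulus $\rho:\nn\to\nn$ such that every $t \in \widehat{\g}\upharpoonright N_0$ with $\min t \meg N_0(\rho(k))$ satisfies $\min \widehat{\varphi}_0(t) \meg k$. Given $\rho$, the growth estimate of the theorem reduces to arranging $N'(l) \meg N_0(\rho(M'(l)))$ for every $l$. I then construct $M' = \{m_1 < m_2 < \cdots\}$ and $N' = \{n_1 < n_2 < \cdots\}$ by simultaneous induction, maintaining at stage $l$ three invariants: (a) $n_l \meg N_0(\rho(m_l))$, giving the growth bound; (b) every $s \in \ff$ with $s \subseteq \{m_1,\ldots,m_l\}$ admits a preimage $t_s \in \varphi_0^{-1}(s)$ with $t_s \subseteq N'$, so that in the limit $\varphi := \varphi_0|_{\g \upharpoonright N'}$ is onto $\ff \upharpoonright M'$; and (c) for every $t \in \g$ with $t \subseteq \{n_1,\ldots,n_l\}$, $\varphi_0(t) \subseteq M'$, so that in the limit $\varphi_0(\g \upharpoonright N') \subseteq \ff \upharpoonright M'$. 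Only finitely many preimages and images must be controlled per stage since $\ff \upharpoonright \{m_1,\ldots,m_l\}$ is finite, and plegma preservation of $\varphi$ and continuity of $\widehat{\varphi}$ are inherited automatically from the corresponding properties of $\widehat{\varphi}_0$.

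The main obstacle is coordinating these three invariants. Preimages of targets $s$ with small $\min s$ are forced by the modulus to have bounded minima and hence must occupy early positions in $N'$, capping how fast $\{n_l\}$ can grow and therefore how large the $m_l$ can be chosen under invariant (a); moreover, the image of an arbitrary $t \subseteq \{n_1,\ldots,n_l\}$ may spill outside $\{m_1,\ldots,m_l\}$ unless $M'$ is chosen to catch every value that appears. The resolution is a back-and-forth diagonal construction in which $m_{l+1}$ is picked extremely sparse in $M_0$ to create room via $\rho(m_{l+1})$, while $\{m_1,\ldots,m_l\}$ is enlarged on the fly to absorb any new image elements introduced by the newly enrolled preimages; finiteness of $\ff \upharpoonright \{m_1,\ldots,m_l\}$ together with the plegma structure and the hereditary and spreading properties of $\ff$ allow one to close off each stage, and the construction terminates to produce the desired $M'$, $N'$ and $\widehat{\varphi}$.
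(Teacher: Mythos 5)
Your plan rests on two steps that do not hold up. First, you take as your starting point that Theorem 3.17 of \cite{AKT2} already supplies a continuous map $\widehat{\varphi}_0$ defined on the closure $\widehat{\g}\upharpoonright N_0$. It does not: that theorem produces a plegma preserving surjection $\varphi_0:\g\upharpoonright N_0\to\ff\upharpoonright M_0$ only, and the continuous extension to the closure is precisely the strengthening the present theorem adds (it is what makes Corollary \ref{generating_subord_sm} work). Nothing in your sketch constructs such an extension for a black-box $\varphi_0$, and an arbitrary plegma preserving surjection need not admit one; so the hardest part of the statement is assumed rather than proved. Second, even granting continuity and your modulus $\rho$, the back-and-forth is not closed. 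Surjectivity onto $\ff\upharpoonright M'$ after restricting the domain to $\g\upharpoonright N'$ demands, for every $s\in\ff$ with $s\subseteq M'$, a preimage $t_s\subseteq N'$; but preimages are dictated by $\varphi_0$, not chosen by you. Your own modulus forces $\min t_s<N_0(\rho(\min s+1))$, so $\min t_s$ must be one of the finitely many small elements already frozen into $N'$, and there is no reason that $\varphi_0^{-1}(s)$ contains an element with such support at all. Moreover each enlargement of $M'$ made to absorb images creates new members of $\ff$ inside $M'$ and hence new preimage obligations, and you give no argument (Ramsey-theoretic or structural) that this regress terminates at each stage; the appeal to ``finiteness plus the spreading and hereditary properties'' does not address it. Incidentally, choosing $m_{l+1}$ very sparse works against you: it tightens the requirement $n_{l+1}\meg N_0(\rho(m_{l+1}))$ rather than creating room.

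The paper avoids all of this by not going through Theorem 3.17 at all. It transfers $\g$ to $\g'=N^{-1}(\g)$, applies Corollary 2.17 of \cite{AKT2} (Proposition \ref{cx21}) to obtain $L_0$ and $L\in[L_0(M)]^\infty$ with $L_0(\ff)\upharpoonright L\sqsubseteq\g'\upharpoonright L$, and defines the map by sending each node of $\widehat{\g}'\upharpoonright L$ to its unique initial segment lying in $L_0(\widehat{\ff})$. For that specific map, continuity, plegma preservation, surjectivity onto $L_0(\ff)\upharpoonright L$, and the identity $\min\varphi_1(t)=\min t$ (which yields the growth bound after re-indexing by $N$ and $L_0^{-1}$) are all immediate. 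To salvage your route you would have to either recover this initial-segment structure of the map (at which point you are reproducing the paper's argument) or supply both an independent construction of the continuous extension and a genuine combinatorial argument that surjectivity survives the simultaneous thinning; neither is present in the proposal.
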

For the proof of the above theorem we will need Corollary 2.17 from
\cite{AKT2}.
To state it we recall some notation.
For two families $\mathcal{H}_1$ and $\mathcal{H}_2$ of finite subsets of $\nn$, write
$\mathcal{H}_1\sqsubseteq\mathcal{H}_2$ if every element in $\mathcal{H}_1$ has
an extension in $\mathcal{H}_2$ and
every element in $\mathcal{H}_2$ has an  initial segment in $\mathcal{H}_1$.
Moreover, for every infinite subset $L$ of $\nn$ and every family $\mathcal{H}$ of
finite subsets of $\nn$, let
\begin{equation}
  \label{eq005}
  L(\mathcal{H})=\{L(s):s\in\mathcal{H}\}.
\end{equation}

\begin{prop}[Corollary 2.17 \cite{AKT2}]
 \label{cx21}
 Let $\ff$ and $\g$ be regular thin families with $o(\ff)\mik o(\g)$.
 Then there exists $L_0$ in $[\nn]^\infty$ such that for every
 $M$ in $[\nn]^\infty$ there exists $L$ in $[L_0(M)]^\infty$ satisfying $L_0(\ff)\upharpoonright L\sqsubseteq\g\upharpoonright L$.
\end{prop}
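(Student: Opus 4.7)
I would apply Corollary 2.17 (Proposition \ref{cx21}) to produce an auxiliary set $L_0$, and then define $\varphi$ as the ``take unique initial segment in $L_0(\ff)\upharpoonright L$'' map, composed with the identification $L_0\colon M'\leftrightarrow L$. The first step is to arrange $L_0\subseteq N$: inspecting the proof of Corollary 2.17 in \cite{AKT2}, $L_0$ is constructed by iterative Ramsey-type refinements, and these can be performed inside any prescribed infinite set (equivalently, one may pass to the pullback family $\g_N=\{t\in[\nn]^{<\infty}:N(t)\in\g\}$, which is regular thin of order $o(\g)$, and apply the corollary to $(\ff,\g_N)$). With $L_0\subseteq N$ in hand, apply the conclusion of the corollary to the given $M$, obtaining $L\in[L_0(M)]^\infty$ with $L_0(\ff)\upharpoonright L\sqsubseteq\g\upharpoonright L$. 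Set $N'=L$ (hence $N'\subseteq L_0\subseteq N$) and $M'=L_0^{-1}(L)\subseteq M$; both are infinite and $L_0(M')=L$.

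\textbf{Defining the map and checking its properties.} For each $t\in\g\upharpoonright N'$, the $\sqsubseteq$ relation together with thinness of $L_0(\ff)\upharpoonright L$ supplies a unique initial segment $\psi(t)$ of $t$ lying in $L_0(\ff)\upharpoonright L$; set $\varphi(t)=L_0^{-1}(\psi(t))\in\ff\upharpoonright M'$. Extend to $\widehat{\varphi}\colon\widehat{\g}\upharpoonright N'\to\widehat{\ff}\upharpoonright M'$ by sending each $u$ to $L_0^{-1}$ applied either to the unique initial segment of $u$ lying in $L_0(\ff)\upharpoonright L$ (if one exists) or to $u$ itself otherwise. Surjectivity follows from the other half of $\sqsubseteq$: each $L_0(s)$ with $s\in\ff\upharpoonright M'$ has an extension in $\g\upharpoonright L=\g\upharpoonright N'$. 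Plegma preservation is immediate since $\psi(t)(k)=t(k)$ for $k\mik|\psi(t)|$ and $L_0^{-1}$ is an increasing bijection. The min condition reduces to $\min\psi(t)=\min t\meg N'(l)=L(l)=L_0(M'(l))$, whence $\min\varphi(t)=L_0^{-1}(\min\psi(t))\meg M'(l)$. Continuity of $\widehat{\varphi}$ splits into two cases at each $u$: if $u$ already contains an initial segment in $L_0(\ff)\upharpoonright L$, then thinness makes $\widehat{\varphi}$ locally constant on a neighborhood of $u$; otherwise, for $u'\supseteq u$ with $\min(u'\setminus u)$ large, either $\widehat{\varphi}(u')=L_0^{-1}(u')$ is close to $L_0^{-1}(u)$, or $\widehat{\varphi}(u')=L_0^{-1}(s)$ for some initial segment $s\supsetneq u$ with $\min(s\setminus u)$ large, which is again close to $L_0^{-1}(u)$.

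\textbf{Main obstacle.} The principal technical point is arranging the simultaneous constraints $M'\subseteq M$ and $N'\subseteq N$. A plain invocation of Proposition \ref{cx21} produces some $L_0\in[\nn]^\infty$ with no control over $L_0\cap N$; consequently $L\subseteq L_0(M)\subseteq L_0$ may intersect $N$ only finitely, leaving no room to choose an infinite $N'\subseteq N$. Localizing $L_0$ to $N$ (or equivalently, transferring to the pullback $\g_N$) resolves this; the remaining plegma, continuity, surjectivity and min-condition verifications are then routine.
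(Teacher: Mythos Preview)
Your proposal does not prove Proposition \ref{cx21}: you explicitly invoke it as a black box (``I would apply Corollary 2.17 (Proposition \ref{cx21}) to produce an auxiliary set $L_0$\ldots''). What you have actually written is a proof of Theorem \ref{continuous_plegma_embending}, the statement immediately preceding Proposition \ref{cx21} in the paper. Proposition \ref{cx21} itself is quoted verbatim from \cite{AKT2} and carries no proof in this paper, so there is nothing to compare against for that statement.

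If instead we compare your argument to the paper's proof of Theorem \ref{continuous_plegma_embending}, the approaches coincide. The paper handles the $N'\subseteq N$ constraint exactly via your ``pullback'' alternative: it sets $\g'=\{N^{-1}(t):t\in\g\}$, applies Proposition \ref{cx21} to the pair $(\ff,\g')$ to obtain $L_0$ and $L\in[L_0(M)]^\infty$ with $L_0(\ff)\upharpoonright L\sqsubseteq\g'\upharpoonright L$, and then takes $M'=L_0^{-1}(L)$ and $N'=N(L)$. The intermediate map $\widehat{\varphi}_1$ is the unique-initial-segment assignment you describe, and the final map is $\widehat{\varphi}=L_0^{-1}\circ\widehat{\varphi}_1\circ N^{-1}$. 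One small point: your two options for localizing to $N$ are not quite ``equivalent'' as stated. Running the Ramsey refinements inside $N$ would yield $L_0\subseteq N$ and allow you to take $N'=L$ directly, but applying Proposition \ref{cx21} to $(\ff,\g_N)$ gives $L_0\in[\nn]^\infty$ with no reason to lie in $N$; the relation $L_0(\ff)\upharpoonright L\sqsubseteq\g_N\upharpoonright L$ then concerns $\g_N$, not $\g$, and you must push forward by $N$ at the end to land in $\g\upharpoonright N(L)$ --- which is exactly the paper's route with $N'=N(L)$ rather than $N'=L$.
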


\begin{proof}[Proof of Theorem \ref{continuous_plegma_embending}]
Let $M$ and $N$ be two infinite subsets of $\nn$. For every finite subset $t$
of $\nn$, there exists a unique finite subset $s$ of $\nn$ such that $N(s)=t$.
Put $N^{-1}(t)=s$, and
$\g'=\{N^{-1}(t)\in[\nn]^{<\infty}:t\in \g\}$. It follows that $\g'$ is
a regular thin family with $o(\g')=o(\g)$. Moreover,
\begin{equation}
  \label{eq006}
  N(\g')=\g\upharpoonright N.
\end{equation}
By Proposition \ref{cx21}, there exist infinite subsets $L_0$ of $\nn$
and $L$
of $L_0(M)$ such that
\begin{equation}
  \label{eq007}
  L_0(\ff)\upharpoonright L\sqsubseteq\g'\upharpoonright L.
\end{equation}
We essentially need to prove the
following.

  \noindent \textbf{Claim:} There exists a continuous map
  $\widehat{\varphi}_1:\widehat{\g}'\upharpoonright L\to L_0(\widehat{\ff})\upharpoonright L$
  such that, setting $\varphi_1$ to be the restriction of $\widehat{\varphi}_1$
  on $\g'\upharpoonright L$, the following is satisfied. The map $\varphi_1$ is a plegma preserving map
  onto $L_0(\ff)\upharpoonright L$ such that for every
  $t\in \g' \upharpoonright L$,  $\varphi_1(t)$
  is an initial segment of $t$ and therefore $\min t=\min\varphi_1(t)$.

  \noindent{\em Proof of Claim.} Since $\ff$ is a thin family,
     so is $L_0(\ff)$.
     Hence, by \eqref{eq007}, for every $t\in\g'\upharpoonright L$ there
     exists a unique $s_t\in L_0(\ff)$ such that $s_t$ is an initial segment of $t$. Let
     $\mathcal{A}= (\widehat{\g}'\upharpoonright L)\setminus
     (L_0(\widehat{\ff})\upharpoonright L)$.
     Also observe that for every $\widehat{t}\in\mathcal{A}$ and every $t,t'$ in
     $\g'\upharpoonright L$ that both end-extend $\widehat{t}$ we have that
     $s_t=s_{t'}$ and both $s_t,s_{t'}$ are initial segments of $\widehat{t}$. For every $\widehat{t}\in\mathcal{A}$ we set
     $s_{\widehat{t}}=s_t$ where $t$ is any element from $\g'\upharpoonright L$ that
     end-extends $\widehat{t}$. Finally, for every $\widehat{t}\in L_0(\widehat{\ff})\upharpoonright L$
     we set $s_{\widehat{t}}=\widehat{t}$. Setting $\widehat{\varphi}_1(\widehat{t})=s_{\widehat{t}}$ for all
     $\widehat{t}\in\g'\upharpoonright L$, we have that $\widehat{\varphi}_1$ is
     as desired and the proof of the claim is complete.

  Since $L\in[L_0(M)]^\infty$, there exists $M'$ in $[M]^\infty$ such that $L_0(M')=L$. Then $L_0(\widehat{\ff}\upharpoonright M')=L_0(\widehat{\ff})\upharpoonright L$.
  Moreover, let $N'=N(L)$. It is easy to check that $\widehat{\g}\upharpoonright N'=N(\widehat{\g}'\upharpoonright L)$.
  Define $\widehat{\varphi}:\widehat{\g}\upharpoonright N'\to \widehat{\ff}\upharpoonright M'$
  by setting for every $\widehat{t}\in \widehat{\g}\upharpoonright N'$,
  $\widehat{\varphi}(\widehat{t})=L_0^{-1}(\widehat{\varphi}_1(N^{-1}(\widehat{t})))$, where $L_0^{-1}(s)$
  is defined similarly to $N^{-1}(s)$ for every $s\in[L_0]^{<\infty}$. It follows readily
  that $\widehat{\varphi}$ is as desired and the proof is complete.
\end{proof}

By Theorem \ref{continuous_plegma_embending}, we have the following
immediate corollary.
\begin{cor}
\label{generating_subord_sm}
Let $X$ be a Banach space, $\xi$ a countable ordinal and $(e_n)_n\in SM_\xi^w(X)$. Then for every regular
thin family $\ff$ of order at least $\xi$ and every infinite subset $M$ of $\nn$ there exist a
further infinite subset $L$ of $M$  and an $\ff$-sequence $(x_s)_{s\in\ff}$
in $X$ such that the $\ff$-subsequence $(x_s)_{s\in\ff\upharpoonright L}$ is
subordinated, weakly null and generates $(e_n)_n$ as an $\ff$-spreading model.
\end{cor}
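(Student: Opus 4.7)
The plan is to construct the desired $\ff$-sequence by pulling back the known generator of $(e_n)_n$ through the continuous plegma embedding provided by Theorem~\ref{continuous_plegma_embending}. First I would unpack the hypothesis: since $(e_n)_n\in SM_\xi^w(X)$, there exist a regular thin family $\g$ of order $\xi$, an infinite set $N\subseteq\nn$ and a subordinated weakly null $\g$-subsequence $(y_t)_{t\in\g\upharpoonright N}$ in $X$ that generates $(e_n)_n$ as a $\g$-spreading model with respect to some null sequence $(\delta_n)_n$. Let $\widehat{\psi}_0:\widehat{\g}\upharpoonright N\to X$, with $X$ carrying the weak topology, be the continuous map witnessing subordination; since the sequence is weakly null, $\widehat{\psi}_0(\emptyset)=0$.

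Next, since $o(\g)=\xi\mik o(\ff)$, I would apply Theorem~\ref{continuous_plegma_embending} with the roles of $\ff$ and $\g$ interchanged, and with $N$ playing the role of the lower-order infinite set and $M$ the role of the higher-order one. This produces $N_0\in[N]^\infty$, $L\in[M]^\infty$ and a continuous map $\widehat{\varphi}:\widehat{\ff}\upharpoonright L\to\widehat{\g}\upharpoonright N_0$ whose restriction $\varphi$ to $\ff\upharpoonright L$ is a plegma-preserving surjection onto $\g\upharpoonright N_0$, satisfying $\min\varphi(s)\meg N_0(l)$ whenever $s\in\ff\upharpoonright L$ with $\min s\meg L(l)$. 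I would then define $x_s=y_{\varphi(s)}$ for $s\in\ff\upharpoonright L$ and extend to $\ff$ arbitrarily (say by $0$).

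The three required properties are essentially bookkeeping. For \emph{subordination}, the composition $\widehat{\psi}:=\widehat{\psi}_0\circ\widehat{\varphi}$ is continuous from $\widehat{\ff}\upharpoonright L$ into $X$ (weak) and agrees with $x_s$ on $\ff\upharpoonright L$. For \emph{weak nullity}, the min-control forces $\widehat{\varphi}(\emptyset)=\emptyset$: any sequence $(s_n)$ in $\ff\upharpoonright L$ with $\min s_n\to\infty$ produces $\min\varphi(s_n)\to\infty$, hence $\varphi(s_n)\to\emptyset$ in $\{0,1\}^\nn$, and continuity of $\widehat{\varphi}$ yields the claim; therefore $\widehat{\psi}(\emptyset)=\widehat{\psi}_0(\emptyset)=0$. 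For the \emph{generating property}, any plegma family $(s_j)_{j=1}^k$ in $\ff\upharpoonright L$ with $s_1(1)\meg L(l)$ is sent by $\varphi$ to a plegma family in $\g\upharpoonright N_0$ with $\varphi(s_1)(1)\meg N_0(l)\meg N(l)$, and the spreading model inequality established for $(y_t)_{t\in\g\upharpoonright N}$ transfers verbatim to $(x_s)_{s\in\ff\upharpoonright L}$ with the same constants $\delta_l$.

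The whole argument is a pullback and presents no serious obstacle; the only item requiring attention is matching the index conventions between the two families and recognising that the min-control clause of Theorem~\ref{continuous_plegma_embending} serves a dual purpose, delivering both $\widehat{\varphi}(\emptyset)=\emptyset$ (for weak nullity) and the correct threshold $\varphi(s_1)(1)\meg N(l)$ (for the generating estimate).
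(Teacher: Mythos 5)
Your proposal is correct and follows essentially the same route as the paper: unpack the generator over a family $\g$ of order $\xi$, apply Theorem~\ref{continuous_plegma_embending} (with the families' roles swapped so the map goes from $\widehat{\ff}\upharpoonright L$ onto $\widehat{\g}\upharpoonright N'$), pull back via composition to get subordination, use the min-control plus continuity to get $\widehat{\varphi}(\emptyset)=\emptyset$ and hence weak nullity, and use plegma preservation with the min-control threshold to transfer the generating estimates. No gaps; the bookkeeping matches the paper's argument.
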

\begin{proof}
Since $(e_n)_n$ belongs to $SM_\xi^w(X)$, there exist a regular thin family
$\g$ of order $\xi$, an infinite subset $N$ of $\nn$ and a
$\g$-sequence $(y_s)_{s\in\g}$ such that the $\g$-subsequence
$(y_s)_{s\in\g\upharpoonright N}$ is subordinated, weakly null and generates
$(e_n)_n$ as a $\g$-spreading model. Let $\widehat{\varphi}_1:
\widehat{\g}\upharpoonright N\to X$ be the continuous map witnessing that
the $\g$-subsequence
$(y_s)_{s\in \g\upharpoonright N}$ is subordinated.
Fix a regular thin family $\ff$ of order at least $\xi$ and
an infinite subset $M$ of $\nn$.
By Theorem \ref{continuous_plegma_embending}
 there exist an infinite subset $L$ of $M$, an infinite subset $N'$ of $N$
and a continuous map $\widehat{\varphi}:\widehat{\ff}\upharpoonright L\to
\widehat{\g}\upharpoonright N'$ such that setting $\varphi$ to be the
restriction of $\widehat{\varphi}$ on $\ff \upharpoonright L$ we have
that $\varphi$ is a plegma preserving map onto $\g\upharpoonright N'$
satisfying for every $l\in\nn$ and $t\in \ff \upharpoonright L$ ,
\begin{equation}
  \label{eq008}
  \text{if }\min t\meg L(l)\text{ then }\min\varphi(t)\meg N'(l).
\end{equation}
Set
$\widehat{\varphi}_2=\widehat{\varphi}_1\circ\widehat{\varphi}$ and $x_s=y_{\widehat{\varphi}_2(s)}$
for all $s\in\widehat{\ff}\upharpoonright L$.
Then $\widehat{\varphi}_2$ is continuous and therefore $(x_s)_{s\in\ff\upharpoonright L}$ is subordinated.
Since the $\g$-subsequence $(y_s)_{s\in\g\upharpoonright N}$ generates $(e_n)_n$ as a $\g$-spreading model,
it follows that $(y_s)_{s\in\g\upharpoonright N'}$ also does and therefore, invoking \eqref{eq008},
we get that $(x_s)_{s\in\ff\upharpoonright L}$ generates $(e_n)_n$ as an $\ff$-spreading model too.
The continuity of $\widehat{\varphi}$ and \eqref{eq008} yields that
$\widehat{\varphi}(\emptyset)=\emptyset$. Thus, since $(y_s)_{s\in\g\upharpoonright N}$ is weakly
null, $(x_s)_{s\in\ff\upharpoonright L}$ is weakly null too.
\end{proof}

Corollary \ref{generating_subord_sm} yields that the transfinite hierarchy $(SM_\xi^w(X))_{\xi<\omega_1}$ is an increasing one. In general, this hierarchy is nontrivial and the class $SM_{\xi}^w(X)$ for $\xi>1$ is richer than $SM_1^w(X)$.  In fact, for every positive integer $k$ there exists a reflexive space $\mathfrak{X}_{k+1}$ with an unconditional basis such that $\mathfrak{X}_{k+1}$ has no $k$-order spreading model equivalent to the standard basis of $\ell^1$ while $\mathfrak{X}_{k+1}$ admits a
 $k+1$-order spreading model equivalent to the standard basis of $\ell^1$ (Section 12, \cite{AKT1}). Since $\mathfrak{X}_{k+1}$ is reflexive, it follows that $SM_{k}^w(\mathfrak{X}_{k+1})$ is a proper subset of $SM_{k+1}^w(\mathfrak{X}_{k+1})$. This is due to the fact that for reflexive Banach spaces $X$ the set of (Schauder basic) spreading models of order $\xi$ coincides with the set of spreading models generated by subordinated weakly null $\ff$-sequences of the same order. Indeed, if $(e_n)$ is a spreading model of order $\xi$ and $\ff$ a regular thin family of order $\xi$, then by the reflexivity of $X$ and Proposition 6.16 of \cite{AKT2}, $(e_n)$ is generated by a subordinated $\ff$-subsequence $(x_s)_{s\in\ff\upharpoonright M}$. If $(e_n)$ is not equivalent to $\ell^1$ basis, then by  Theorem 6.14 of \cite{AKT2} $(x_s)_{s\in\ff\upharpoonright M}$ is weakly null. If $(e_n)$ is equivalent to $\ell^1$ basis, then, again by  Theorem 6.14 of \cite{AKT2}, $(e_n)$ belongs to $SM_\xi^w(X)$.

It is important to point out that the higher order spreading models generated by subordinated weakly null $\ff$-sequences do form a new isomorphic invariance for Banach spaces. In particular, there exist two Banach spaces $X$ and $Y$ such that $SM_1^w(X)=SM_1^w(Y)$ and $SM_2^w(X)\neq SM_2^w(Y)$. This was known to the authors of \cite{AKT3}, though it wasn't explicitly stated.  Indeed, let $X$ be the space $X^2_{T,2,3}$ given in Theorem 12.11 in \cite{AKT3} and $Y$ the direct sum of the Tsirelson space and $\ell^2$. Both spaces are reflexive and therefore for every countable ordinal $\xi$ and every Schauder basic $\xi$-order spreading model $(e_n)_n$ of $X$ (resp. $Y$), we have that $(e_n)_n$ belongs to $SM_\xi^w(X)$ (resp. $SM_\xi^w(Y)$). By Theorems 7.3 and 9.18 in \cite{AKT3}, it is easy to see that every Schauder basic spreading model of $Y$ of any order $\xi$ is equivalent to either the standard basis of $\ell^1$ or the standard basis of $\ell^2$, and, of course, both cases occur for every $\xi$. On the other hand, it is shown there that while every $(e_n)_{n}$ in $SM_1^w(X)$ is equivalent to either the standard basis of $\ell^1$ or the standard basis of $\ell^2$, $SM_2^w(X)$ contains a sequence equivalent to the standard basis of $\ell^3$.

\section{The semi-lattice structure of $SM_\xi^w(X)$}\label{section_AOST}
Let $X$ be a Banach space and $\xi$ be a countable ordinal.
By Theorem 6.11 of \cite{AKT2}, every sequence $(e_n)_n$ in $SM_\xi^w(X)$ is either $1$-suppression unconditional or $\|\sum_{i=1}^na_ie_i\|=0$ for all $n\in \nn$ and $(a_i)_1^n\in\rr$.
We endow $SM_\xi^w(X)$ with the pre-partial order $\preccurlyeq$ of
domination. That is, for two sequences $(e^1_n)_n$ and $(e^2_n)_n$ in
$SM_\xi^w(X)$ and $C>0$ we say that $(e^2_n)_n$ $C$-dominates $(e^1_n)_n$ if
\begin{equation}
  \label{eq009}
  \Big\|\sum_{i=1}^na_ie^1_i\Big\|\mik C\Big\|\sum_{i=1}^na_ie^2_i\Big\|
\end{equation}
for all $n\in \nn$ and $(a_i)_1^n\in\rr$. Write $(e^1_n)_n\preccurlyeq(e^2_n)_n$ if $(e^2_n)_n$ $C$-dominates $(e^1_n)_n$ for some $C>0$.
$(e^1_n)_n$ and $(e^2_n)_n$ are equivalent, denoted by $(e^1_n)_n\sim(e^2_n)_n$, if $(e^1_n)_n\preccurlyeq(e^2_n)_n$ and $(e^2_n)_n\preccurlyeq(e^1_n)_n$. We write $(e^1_n)_n\prec(e^2_n)_n$, if $(e^1_n)_n\preccurlyeq(e^2_n)_n$ and $(e^1_n)_n\not\sim(e^2_n)_n$.
It is easy to see that $\sim$ is an equivalence relation on $SM_\xi^w(X)$.
Let $\mathbf{SM}_\xi^w(X)=SM_\xi^w(X)/_\sim$ endowed with the partial order induced by $\preccurlyeq$, which will be denoted again by $\preccurlyeq$.

In this section we will generalize some results from \cite{AOST}.
The arguments are similar to the ones contained in \cite{AOST}. In particular,
we will show that $\mathbf{SM}_\xi^w$ is a semi-lattice and that
every countable subset of $\mathbf{SM}_\xi^w$ admits an upper bound in $\mathbf{SM}_\xi^w$.
Towards achieving that we introduce a further generalization of the $\ff$-spreading models which we call the joint $\ff$-models.
First, we need some additional notation. For every
$k$ in $\nn$, define a map $i_k:\nn\to\nn$ by setting for every $j\in\nn$
\begin{equation}
  \label{eq010}
  i_k(j)=((j-1) \mod k) +1.
\end{equation}

\begin{defn}\label{joint_spreading_model}
   Let $X$ be a Banach space, $\ff$ a regular thin family, $k\in \nn$ and
   $((x^i_s)_{s\in\ff})_{i=1}^k$ a $k$-tuple of $\ff$-sequences in $X$.
   Let $(E,\|\cdot\|_*)$ be an infinite dimensional
   seminormed linear space with Hamel basis
   $(e_n)_{n}$. Let $M\in[\nn]^\infty$ and $\delta_n\searrow 0$

  We say that the $k$-tuple $((x^i_s)_{s\in\ff\upharpoonright M})_{i=1}^k$
  generates $(e_n)_{n}$ as a joint
  $\ff$-model \big(with respect to $(\delta_n)_n$\big) if for every $m,n$ in $\nn$ with $1\mik m\mik n$, every finite sequence $(a_i)_{i=1}^m$  in $[-1,1]$ and every
$(s_j)_{j=1}^m$ in $\mathrm{{Plm}}_m(\ff\upharpoonright M)$
  with $s_1(1)\geq M(n)$, we have
  \begin{equation}
  \label{eq011}
  \Bigg{|}\Big{\|}\sum_{j=1}^m a_j x^{i_k(j)}_{s_j}\Big{\|}-\Big{\|}\sum_{j=1}^m a_j
  e_j\Big{\|}_* \Bigg{|}\mik\delta_n.
  \end{equation}
A $k$-tuple $((x^i_s)_{s\in\ff})_{i=1}^k$ is said to admit $(e_n)_{n}$ as a joint
$\ff$-model
   if there exists  $M\in[\nn]^\infty$ such that the $k$-tuple $((x^i_s)_{s\in\ff\upharpoonright M})_{i=1}^k$ generates  $(e_n)_{n}$ as a joint
  $\ff$-model.
%
\end{defn}

Note that a joint $\ff$-model is not necessarily spreading.
The arguments establishing the existence
of the joint $\ff$-models (see Theorem \ref{existence_joint_sm} below)
are similar to the ones concerning the $\ff$-spreading models.
We will need the following result
(see Theorem 3.6 of \cite{AKT2}) which establishes
the Ramsey property for the plegma families.

\begin{thm} \label{ramseyforplegma}
Let $\ff$ be a
regular thin family, $M$ an infinite subset of $\nn$ and $l\in\nn$. Then for every finite partition
$\mathrm{{Plm}}_l(\ff\upharpoonright M)=\bigcup_{i=1}^p \mathcal{P}_i$,
there exist $L\in[M]^\infty$ and $1\mik i_0\mik p$ such that
$\mathrm{{Plm}}_l(\ff\upharpoonright L)\subseteq \mathcal{P}_{i_0}$.
\end{thm}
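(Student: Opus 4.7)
The plan is to reduce the statement to the classical Nash--Williams partition theorem for regular thin families (which is precisely the $l=1$ case) by coding plegma $l$-tuples as single finite subsets of $\nn$ via their unions. The interleaving conditions in Definition \ref{defn plegma} are rigid enough that a plegma $l$-tuple can be recovered from its union once the sizes of its constituents are known.

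First I would refine $M$ to stabilize the size profile of plegma $l$-tuples. Because $\ff$ is regular thin, a preliminary Nash--Williams pigeonhole (applied to colorings determined by the cardinalities of the $s_j$'s, iterated if necessary) produces $M_0\in[M]^\infty$ on which the vector $(|s_1|,\ldots,|s_l|)$ of cardinalities of the constituents of any plegma tuple from $\ff\upharpoonright M_0$ is controlled in a uniform way. Granted a fixed size pattern, the assignment $(s_1,\ldots,s_l)\mapsto s_1\cup\cdots\cup s_l$ is injective: the plegma inequalities $s_i(k)<s_{i+1}(k)<s_i(k+1)$ force the elements of distinct $s_j$'s to alternate in a prescribed order, so each $s_j$ can be read off as a specific subsequence of the sorted union.

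The image family $\mathcal{K}$ of codes then sits inside a regular thin family on $M_0$: compactness transfers from the compactness of $\ff^l$; thinness within a fixed size class is immediate since $|s_1\cup\cdots\cup s_l|$ is constant there, so no proper initial segment of a code is itself a code; spreading is inherited from $\ff$ together with the fact that the plegma inequalities are preserved under coordinatewise increase. The original $p$-partition $\mathrm{Plm}_l(\ff\upharpoonright M_0)=\bigcup_{i=1}^p \mathcal{P}_i$ pulls back via the coding to a $p$-partition of $\mathcal{K}$. Applying Nash--Williams to $\mathcal{K}$ yields $L\in[M_0]^\infty$ and a color $i_0$ with every code in $\mathcal{K}\upharpoonright L$ of color $i_0$, and decoding gives $\mathrm{Plm}_l(\ff\upharpoonright L)\subseteq \mathcal{P}_{i_0}$.

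The main obstacle will be the bookkeeping in the first step: constituents of a plegma tuple may have different cardinalities, so stabilizing the full size vector without destroying the regular thin structure of $\ff$ requires care. An alternative that avoids the coding entirely is induction on $l$: holding $s_1\in\ff\upharpoonright M$ fixed, apply the inductive hypothesis to the induced $p$-partition of plegma $(l-1)$-tuples compatible with $s_1$, and then stabilize in $s_1$ via a further Nash--Williams application together with a standard diagonalization. I suspect the proof in \cite{AKT2} is closer to this inductive version.
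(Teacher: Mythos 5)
You should first note that the paper itself offers no proof of Theorem \ref{ramseyforplegma}: it is quoted as Theorem 3.6 of \cite{AKT2}. The closest argument actually carried out in the paper is Proposition \ref{ramseyforblock}, where the Ramsey property for \emph{block} sequences is obtained exactly by your strategy (code a tuple by its union, check the family of unions is thin, apply the partition theorem for thin families), so your high-level plan is the natural one. The genuine gap is in the two places where you invoke a ``fixed size pattern''. For any $\ff$ of infinite order (e.g.\ the maximal Schreier sets $\{s:|s|=\min s\}$) the cardinalities of members of $\ff\upharpoonright L$ are unbounded for every infinite $L$, and within a single $\mathrm{Plm}_l(\ff\upharpoonright L)$ the size vector $(|s_1|,\ldots,|s_l|)$ takes infinitely many values; no preliminary refinement of $M$ can stabilize it. Since both your injectivity claim and your thinness claim (``no proper initial segment of a code is a code since $|s_1\cup\cdots\cup s_l|$ is constant there'') are argued only inside a fixed size class, the coding step is unproved in precisely the cases that matter. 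The danger is real: for an arbitrary finite set $u$, \emph{every} size vector $(a_1,\ldots,a_l)$ with $\sum a_i=|u|$ yields a plegma decomposition of $u$ via the round-robin pattern, so the union map is badly non-injective on plegma tuples of arbitrary finite sets; injectivity can only come from membership of the components in $\ff$.

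What rescues the approach — and what you would need to prove — is a lemma using thinness of $\ff$ itself, with no size stabilization: the plegma conditions force the sorted union to be distributed by a round-robin rule (round $k$ consists of the $k$-th elements of the not-yet-exhausted components, in order of index), so if two plegma tuples with all components in $\ff$ have unions $u\sqsubseteq v$, they must assign elements identically step by step; at the first disagreement, or at the point where $u$ is exhausted while $v$ is not, some component of one tuple is a \emph{proper initial segment} of the corresponding component of the other, contradicting thinness of $\ff$. This single argument gives both that the union map is injective on $\mathrm{Plm}_l(\ff\upharpoonright M)$ and that the family of unions is thin; after that, your appeal to the Nash--Williams/Pudl\'ak--R\"odl theorem goes through as in Proposition \ref{ramseyforblock} (note that spreading/regularity of the code family is neither needed nor true in general — thinness suffices). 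Your fallback sketch by induction on $l$ is too underdeveloped to assess: fixing $s_1$ does not present the compatible $(l-1)$-tuples as $\mathrm{Plm}_{l-1}$ of a regular thin family restricted to a set, and the subsequent stabilization over all admissible $s_1$ requires a nontrivial fusion argument that you do not indicate.
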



\begin{lem}
  Let $X$ be a Banach space and $\ff$ a regular thin family. Also let $k\in\nn$
  and  $(x_s^1)_{s\in\ff},...,(x_s^k)_{s\in\ff}$ bounded $\ff$-sequences in
  $X$. Then for every infinite subset $M$ of $\nn$, every $\ee>0$ and every $l\in\nn$,
  there exists an infinite subset $L$ of $M$ such that for every
  $(s_j)_{j=1}^l,(t_j)_{j=1}^l$ in $\mathrm{{Plm}}_l(\ff\upharpoonright L)$
  and every choice of reals
  $(a_j)_{j=1}^l$ from $[-1,1]$ we have that
  \begin{equation}
    \label{eq012}
    \Bigg|\Big\|\sum_{j=1}^l a_jx^{i_k(j)}_{s_j}\Big\|-\Big\|\sum_{j=1}^l a_jx^{i_k(j)}_{t_j}\Big\|\Bigg|<\ee,
  \end{equation}
  where $i_k$ is as defined in \eqref{eq010}.
\end{lem}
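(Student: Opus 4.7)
The plan is to combine the Ramsey property for plegma families (Theorem \ref{ramseyforplegma}) with a standard discretization of the coefficient cube. The strategy is: (a) reduce ``uniformity over all $(a_j)\in[-1,1]^l$'' to uniformity over a finite $\delta$-net of coefficients; (b) for each fixed finite-net coefficient vector, use Ramsey to stabilize the norm up to a small error on a plegma-rich subset; and (c) diagonalize over the finitely many net vectors.

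Concretely, set $C=\max_{1\mik i\mik k}\sup_{s\in\ff}\|x_s^i\|$, which is finite by boundedness. For any plegma $(s_j)_{j=1}^l$ and any $(a_j)\in[-1,1]^l$, the norm $\|\sum_{j=1}^l a_j x^{i_k(j)}_{s_j}\|$ lies in $[0,lC]$. Fix $\delta=\ee/(4lC)$ and choose a finite $\delta$-net $A\subseteq[-1,1]$; partition $[0,lC]$ into finitely many intervals $I_1,\dots,I_p$ of length strictly less than $\ee/4$. Enumerate $A^l=\{\mathbf{a}^{(1)},\dots,\mathbf{a}^{(N)}\}$, and build a decreasing chain $M=M_0\supseteq M_1\supseteq\cdots\supseteq M_N$ as follows. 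At step $r\meg 1$, partition $\mathrm{Plm}_l(\ff\upharpoonright M_{r-1})$ into $p$ cells by placing $(s_j)_{j=1}^l$ in the $i$-th cell iff $\|\sum_{j=1}^l a^{(r)}_j x^{i_k(j)}_{s_j}\|\in I_i$. By Theorem \ref{ramseyforplegma}, there exists $M_r\in[M_{r-1}]^\infty$ such that $\mathrm{Plm}_l(\ff\upharpoonright M_r)$ is contained in a single cell. Set $L=M_N$.

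To verify the conclusion, fix arbitrary $(s_j)_{j=1}^l,(t_j)_{j=1}^l\in \mathrm{Plm}_l(\ff\upharpoonright L)$ and $(a_j)_{j=1}^l\in[-1,1]^l$. Pick $r\mik N$ with $|a_j-a^{(r)}_j|<\delta$ for all $j$. By the triangle inequality and the definition of $C$,
\begin{equation*}
\Bigl|\Bigl\|\textstyle\sum_{j=1}^l a_j x^{i_k(j)}_{s_j}\Bigr\|-\Bigl\|\textstyle\sum_{j=1}^l a^{(r)}_j x^{i_k(j)}_{s_j}\Bigr\|\Bigr|\mik l\delta C=\ee/4,
\end{equation*}
and the same estimate holds with $(t_j)$ in place of $(s_j)$. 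Since both $(s_j)$ and $(t_j)$ lie in $\mathrm{Plm}_l(\ff\upharpoonright L)\subseteq \mathrm{Plm}_l(\ff\upharpoonright M_r)$, they fall in the same cell of the $r$-th partition, so the two norms $\|\sum_j a^{(r)}_j x^{i_k(j)}_{s_j}\|$ and $\|\sum_j a^{(r)}_j x^{i_k(j)}_{t_j}\|$ belong to the same interval of length $<\ee/4$. Adding the three estimates yields a total error less than $3\ee/4<\ee$, as required.

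I do not expect a genuine obstacle here; the only things to watch are the bookkeeping $\delta=\ee/(4lC)$ so that $l\delta C\mik \ee/4$, and the fact that the chain $M_0\supseteq\cdots\supseteq M_N$ guarantees $\mathrm{Plm}_l(\ff\upharpoonright L)\subseteq \mathrm{Plm}_l(\ff\upharpoonright M_r)$ for every $r\mik N$, so the monochromaticity obtained at each stage is preserved for the final $L$. The role of the map $i_k$ is passive throughout: $i_k(j)$ depends only on the index $j$, so the same superscript appears in both norms being compared, and the argument proceeds exactly as in the single-sequence case.
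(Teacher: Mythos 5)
Your proof is correct and follows essentially the same route as the paper: discretize the coefficient cube $[-1,1]^l$ by a finite net, iterate Theorem \ref{ramseyforplegma} over the net vectors with a partition of $[0,lC]$ into short intervals to stabilize the norms on a decreasing chain of infinite sets, and then pass from net coefficients to arbitrary ones by the triangle inequality. The only differences are cosmetic (your $\ee/4$ bookkeeping versus the paper's $\ee/3$).
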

\begin{proof}
  Let us fix an infinite subset $M$ of $\nn$, a positive real
  $\ee$ and $l\in\nn$. Assume
  $\|x_s^i\|\mik C$ for all $s\in \ff$ and $i=1,...,k$.
  Let $\Lambda$ be a finite $\frac{\ee}{3lC}$-net of $[-1,1]$
  and $((a^q_j)_{j=1}^l)_{q=1}^n$
   an enumeration of all the $l$-tuples consisting of elements from
  $\Lambda$, where $n=|\Lambda|^l$.

  Setting $L_0=M$, we inductively construct a decreasing
  sequence $(L_q)_{q=0}^n$ of infinite subsets of $\nn$ such that
  for every $q=1,...,n$ and every
  $(s_j)_{j=1}^l,(t_j)_{j=1}^l$ from $\mathrm{{Plm}}_l(\ff\upharpoonright L_q)$,
  we have that
  \begin{equation}
    \label{eq013}
    \Bigg|\Big\|\sum_{j=1}^l a^q_jx^{i_k(j)}_{s_j}\Big\|-\Big\|\sum_{j=1}^l a^q_jx^{i_k(j)}_{t_j}\Big\|\Bigg|<\frac{\ee}{3}.
  \end{equation}
  The inductive step consists of an application of
  Theorem \ref{ramseyforplegma}. Indeed, assume that for some $1\mik q\mik n$ the set $L_{q-1}$
  has been chosen. Let $(A_r)_{r=1}^p$ be a partition of $[0,lC]$, such that
  $A_r$ is of diameter at most $\ee/3$ for all $r=1,...,p$.
  Observe that for every $(s_j)_{j=1}^l$ in $\mathrm{{Plm}}_l(\ff\upharpoonright L_q)$
  the vector
  $\sum_{j=1}^la^q_jx^{i_k(j)}_{s_j}$ is of norm at most $lC$.
  Thus setting, for every $r=1,...,p$, $\mathcal{P}_r$ to be the set
  of all $(s_j)_{j=1}^l$ from $\mathrm{{Plm}}_l(\ff\upharpoonright L_{q-1})$
  such that the norm of the vector
  $\sum_{j=1}^la^q_jx^{i_k(j)}_{s_j}$
  belongs to $A_r$, so $(\mathcal{P}_r)_{r=1}^p$ forms a
  partition of
  $\mathrm{{Plm}}_l(\ff\upharpoonright L_{q-1})$. An application of
  Theorem \ref{ramseyforplegma} yields the desired
  $L_q$ and the proof of the inductive step is complete.

  We set $L=L_n$. Clearly,
   for every $q=1,...,n$ and every
  $(s_j)_{j=1}^l,(t_j)_{j=1}^l$ from $\mathrm{{Plm}}_l(\ff\upharpoonright L)$,
  we have
  \begin{equation}
    \label{eq014}
    \Bigg|\Big\|\sum_{j=1}^la^q_jx^{i_k(j)}_{s_j}\Big\|-\Big\|\sum_{j=1}^la^q_jx^{i_k(j)}_{t_j}\Big\|\Bigg|<\frac{\ee}{3}.
  \end{equation}
  It remains to show that $L$ is as desired. Indeed, let $(s_j)_{j=1}^l,(t_j)_{j=1}^l$ in $
  \mathrm{{Plm}}_{kl}(\ff\upharpoonright L)$ and
  $(a_j)_{j=0}^l$ from $[-1,1]$.
  Pick $q_0\in\{1,...,n\}$
  such that $|a_j-a_j^{q_0}|\mik\ee/lC$, for all $j=1,...,l$.
  By the triangle inequality and the choice of $C$,
  \begin{equation}
    \label{eq015}
    \Bigg|\Big\|\sum_{j=1}^l a_jx^{i_k(j)}_{s_j}\Big\|-\Big\|\sum_{j=1}^l
    a^{q_0}_j x^{i_k(j)}_{s_j}\Big\|\Bigg|<\frac{\ee}{3}
  \end{equation}
  and
  \begin{equation}
    \label{eq016}
    \Bigg|\Big\|\sum_{j=1}^l a_jx^{i_k(j)}_{t_j}\Big\|-\Big\|\sum_{j=1}^l
    a^{q_0}_j x^{i_k(j)}_{t_j}\Big\|\Bigg|<\frac{\ee}{3}.
  \end{equation}
  Inequalities \eqref{eq014}-\eqref{eq016} yield
  \begin{equation}
    \label{eq017}
    \Bigg|\Big\|\sum_{j=1}^l a_jx^{i_k(j)}_{s_j}\Big\|-
    \Big\|\sum_{j=1}^l a_jx^{i_k(j)}_{t_j}\Big\|\Bigg|<\ee
  \end{equation}
  and the proof is complete.
\end{proof}
By iterating the above Lemma and diagonalizing, we obtain the following.

\begin{thm}
\label{existence_joint_sm}
  Let $X$ be a Banach space and $\ff$ a regular thin family.
  Also let $k\in\nn$ and $(x_s^1)_{s\in\ff},...,(x_s^k)_{s\in\ff}$ be
  bounded $\ff$-sequences in $X$. Then for every infinite subset
  $M$ of $\nn$ there exists a further infinite subset $L$ of $M$ such that
  the $k$-tuple $((x^i_s)_{s\in\ff\upharpoonright L})_{i=1}^k$ generates
  a joint $\ff$-model.
\end{thm}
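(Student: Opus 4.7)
The plan is to obtain the joint $\ff$-model by iteratively applying the preceding Lemma along a decreasing chain of infinite subsets of $M$ with shrinking tolerances, and then diagonalizing. First, I would fix a null sequence $(\delta_n)_n$ of positive reals and set $L_0=M$. Inductively, for each $n\in\nn$, I would apply the preceding Lemma $n$ times in succession, with $\ee=\delta_n/2$ and $l=m$ for $m=1,\ldots,n$, taking a further infinite subset at each step, to obtain $L_n\subseteq L_{n-1}$ such that for every $1\mik m\mik n$, every pair $(s_j)_{j=1}^m,(t_j)_{j=1}^m\in\mathrm{Plm}_m(\ff\upharpoonright L_n)$ and every $(a_j)_{j=1}^m\in[-1,1]^m$,
\[
\Bigl|\Bigl\|\sum_{j=1}^m a_j x^{i_k(j)}_{s_j}\Bigr\|-\Bigl\|\sum_{j=1}^m a_j x^{i_k(j)}_{t_j}\Bigr\|\Bigr|<\frac{\delta_n}{2}.
\]
Then I would diagonalize by picking an increasing sequence $L(1)<L(2)<\cdots$ with $L(n)\in L_n$ and setting $L=\{L(n):n\in\nn\}$, noting that $L\cap[L(n),\infty)\subseteq L_n$ for every $n$.

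Next, I would define the spreading seminorm by a limit procedure. Take $(e_n)_n$ to be the standard Hamel basis of $c_{00}$. For each $m\in\nn$ and $(a_j)_{j=1}^m\in[-1,1]^m$, and each $n\meg m$, pick some $(s^{(n)}_j)_{j=1}^m\in\mathrm{Plm}_m(\ff\upharpoonright L)$ with $s^{(n)}_1(1)\meg L(n)$. Since every such tuple lies in $\mathrm{Plm}_m(\ff\upharpoonright L_n)$, the construction guarantees that $A_n=\|\sum_{j=1}^m a_j x^{i_k(j)}_{s^{(n)}_j}\|$ is Cauchy and that its limit does not depend on the choices made. I would then set $\|\sum_{j=1}^m a_j e_j\|_*$ equal to this limit, extending to arbitrary real coefficients by positive homogeneity. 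Subadditivity and absolute homogeneity pass to the limit from the norm on $X$, so $\|\cdot\|_*$ is a seminorm on $\mathrm{span}\{e_n:n\in\nn\}$.

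Finally, to verify the joint $\ff$-model inequality \eqref{eq011}: for $1\mik m\mik n$, $(a_j)_{j=1}^m\in[-1,1]^m$ and a plegma $m$-tuple $(s_j)_{j=1}^m$ in $\ff\upharpoonright L$ with $s_1(1)\meg L(n)$, the tuple lies in $\mathrm{Plm}_m(\ff\upharpoonright L_n)$, and for every $n'\meg n$ the stability estimate within $L_n$ gives
\[
\Bigl|\Bigl\|\sum_{j=1}^m a_j x^{i_k(j)}_{s_j}\Bigr\|-A_{n'}\Bigr|<\frac{\delta_n}{2};
\]
letting $n'\to\infty$ yields the required bound $\delta_n$. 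The main obstacle is really just the bookkeeping for the iterated application of the Lemma and the subsequent diagonalization; once the Lemma is granted, the argument is a standard Brunel--Sucheston-type limit construction, with the only new feature relative to the $\ff$-spreading model case being the cyclic indexing $i_k(j)$, which ensures that each of the $k$ sequences contributes infinitely often.
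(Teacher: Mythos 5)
Your proposal is correct and follows exactly the route the paper indicates for Theorem \ref{existence_joint_sm} (which the paper states is obtained ``by iterating the above Lemma and diagonalizing''): iterate the stabilization lemma along a decreasing chain with vanishing tolerances, diagonalize, and define the seminorm $\|\cdot\|_*$ as a Brunel--Sucheston-type limit. The bookkeeping you supply (tuples with $s_1(1)\meg L(n)$ lying in $\mathrm{Plm}_m(\ff\upharpoonright L_n)$, independence of the limit from the choice of tuples, and the final $\delta_n$-estimate) is precisely the standard argument the authors leave implicit.
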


We proceed to the following analogue of Theorem 6.11 from \cite{AKT2} for joint models.

\begin{thm}
  \label{unconditional_joint_spr}
  Let $X$ be a Banach space, $\ff$ a regular thin family and $k\in\nn$.
  Also let $M$ be an infinite subset of $\nn$ and $(x_s^1)_{s\in\ff},...,(x_s^k)_{s\in\ff}$
  seminormalized $\ff$-sequences in $X$ such that the $\ff$-subsequences
  $(x_s^1)_{s\in\ff\upharpoonright M},...,(x_s^k)_{s\in\ff\upharpoonright M}$
  are subordinated and weakly null. Also assume that the $k$-tuple
  $((x_s^i)_{s\in\ff\upharpoonright M})_{i=1}^k$ generates a joint $\ff$-model
  $(e_n)_n$. Then $(e_n)_n$ is (suppression) 1-unconditional.
\end{thm}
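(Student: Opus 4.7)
The plan is to adapt the proof of Theorem 6.11 from \cite{AKT2}, which handles the single-sequence version of this statement. Fix $m\in\nn$, scalars $(a_j)_{j=1}^m\subseteq[-1,1]$, a subset $F\subseteq\{1,\ldots,m\}$ and $\ee>0$; it suffices to establish
\[
\Big\|\sum_{j\in F} a_j e_j\Big\|_* \mik \Big\|\sum_{j=1}^m a_j e_j\Big\|_* + \ee,
\]
since letting $\ee\to 0$ yields the $1$-suppression unconditionality of $(e_n)_n$.

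First I would translate both sides to the $\ff$-sequence side via the joint $\ff$-model property. Choose $n\meg m$ with $2\delta_n<\ee/2$ and set $b_j=a_j$ for $j\in F$ and $b_j=0$ otherwise. Applying Definition \ref{joint_spreading_model} once to $(b_j)_{j=1}^m$ and once to $(a_j)_{j=1}^m$, for every plegma family $(s_j)_{j=1}^m\in\mathrm{Plm}_m(\ff\upharpoonright M)$ with $s_1(1)\meg M(n)$,
\[
\Big\|\sum_{j\in F} a_j e_j\Big\|_* \mik \Big\|\sum_{j\in F} a_j x^{i_k(j)}_{s_j}\Big\| + \delta_n \;\text{ and }\; \Big\|\sum_{j=1}^m a_j x^{i_k(j)}_{s_j}\Big\| \mik \Big\|\sum_{j=1}^m a_j e_j\Big\|_* + \delta_n.
\]
Writing $y=\sum_{j\in F} a_j x^{i_k(j)}_{s_j}$ and $z=\sum_{j\notin F} a_j x^{i_k(j)}_{s_j}$, the problem reduces to producing one such plegma family for which $\|y\|\mik\|y+z\|+\ee/2$. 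Picking $y^*\in B_{X^*}$ norming for $y$ by Hahn--Banach and expanding $\|y\|=y^*(y+z)-y^*(z)$, it suffices to arrange that $|y^*(x^{i_k(j)}_{s_j})|<\ee/(2m)$ for every $j\notin F$, since then $|y^*(z)|\mik\sum_{j\notin F}|a_j|\,|y^*(x^{i_k(j)}_{s_j})|<\ee/2$.

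The main obstacle is this final construction, due to an apparent circularity: $y^*$ depends on the choice of $(s_j)_{j\in F}$, while the bound $|y^*(x^{i_k(j)}_{s_j})|<\ee/(2m)$ (supplied by the weak nullity of each $(x^i_s)_{s\in\ff\upharpoonright M}$) only kicks in once $\min s_j\meg N(y^*)$, and the plegma interleaving condition couples the minima through $\min s_1<\cdots<\min s_m<s_1(2)$. I would resolve this by a diagonal construction in the spirit of \cite{AKT2}: first fix the elements $(s_j)_{j\in F}$ inside $\ff\upharpoonright M$ with widely separated coordinates (the spreading property of $\ff$ ensures that $s_1(2)$, and more generally the successive coordinates of the fixed elements, may be taken arbitrarily large); then determine the norming functional $y^*$ of the resulting $y$; then apply weak nullity uniformly over the finitely many sequences $i=1,\ldots,k$ to extract a threshold $N=N(y^*)$; and finally slot in the remaining positions $(s_j)_{j\notin F}$ within $\ff\upharpoonright M$ so that both the plegma interleaving is preserved and each $\min s_j\meg N$, once again invoking the spreading property of $\ff$ to ensure such elements are available inside the prescribed coordinate gaps. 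The combinatorial heart of the argument is verifying that these interleaving constraints and lower-bound requirements can be simultaneously met, which parallels the corresponding step of the single-sequence proof in \cite{AKT2}.
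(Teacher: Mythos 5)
Your reduction to the inequality $\|\sum_{j\in F}a_je_j\|_*\mik\|\sum_{j=1}^m a_je_j\|_*+\ee$ and the transfer to the $\ff$-side via the joint model are fine, but the step you yourself flag as the ``combinatorial heart'' is a genuine gap, not a routine verification, and as described it fails. The plegma conditions force $s_i(1)<s_j(1)$ for $i<j$ and $s_i(1)<s_j(2)$ for all $i,j$, so every coordinate you would like to make large after the fact (the first coordinates of the dropped positions $j\notin F$, whether these precede or follow the kept ones) is bounded above by coordinates of the already fixed elements $(s_{j'})_{j'\in F}$. Nor can you escape this by ``leaving wide gaps in advance'': the threshold $N(y^*)$ depends on $y^*$, which depends on all coordinates of the fixed elements, so ``wide enough'' is not knowable at the moment the gaps are chosen; weak nullness of the $\ff$-subsequences only controls $|y^*(x_s)|$ for $\min s$ beyond a threshold determined after $y^*$ is fixed, and the interleaving prevents pushing the dropped positions past it. A further warning sign is that your sketch never uses the subordination hypothesis: for order $\meg 2$ a weakly null but non-subordinated $\ff$-sequence can generate a conditional spreading model (e.g.\ for $\ff=[\nn]^2$ the vectors $x_{\{m,n\}}=\sum_{i=m}^{n}e_i$ in $c_0$ are weakly null and generate the summing basis), so any argument relying only on weak nullness of the $\ff$-subsequence would prove too much.

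The paper (following Theorem 6.11 of \cite{AKT2}) resolves exactly this circularity by a convexity argument instead of a norming functional. It removes a single coordinate $p$ at a time and invokes Lemma \ref{Lemma_finding_convex_means}: since subordination and weak nullness give continuous maps $\widehat{\varphi}_i$ with $\widehat{\varphi}_i(\varnothing)=0$, one obtains a completely plegma connected family $(F_j)_{j=1}^l$ in $[\ff\upharpoonright M]^{<\infty}$ and a vector $x_p\in\mathrm{conv}\,\widehat{\varphi}_{i_k(p)}(F_p)$ with $\|x_p\|<\ee/3$. Choosing $s_j\in F_j$ for $j\neq p$ and writing $x_p=\sum_{s\in F_p}\mu_s x^{i_k(p)}_s$, convexity of the norm bounds $\|\sum_{j\neq p}a_jx^{i_k(j)}_{s_j}+a_px_p\|$ by the average over $s\in F_p$ of $\|\sum_{j\neq p}a_jx^{i_k(j)}_{s_j}+a_px^{i_k(p)}_s\|$, and every tuple occurring in this average is a genuine plegma $l$-tuple, hence within the prescribed error of $\|\sum_{j=1}^la_je_j\|_*$. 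The point is that the ``small'' object at the dropped position is constructed simultaneously and plegma-compatibly with the other positions, rather than after a functional has been fixed. To repair your proposal you should replace the norming-functional step by this convex-means argument (and it suffices to delete one index at a time, iterating to get arbitrary $F$).
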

The proof of Theorem \ref{unconditional_joint_spr} follows similar lines as
the proof of Theorem 6.11 from \cite{AKT2}.
Let $l\in\nn$ and $F_1,...,F_l$ be subsets of $[\nn]^{<\infty}$. Recall that
$(F_j)_{j=1}^l$ is \emph{completely plegma connected} if for every choice $s_j\in F_j$ for all
$1\mik j\mik l$,  the $l$-tuple $(s_j)_{j=1}^l$ is a plegma family. Moreover,
the convex hull of a subset $A$ of a Banach space is denoted by $\mathrm{conv}(A)$.
For the proof of Theorem \ref{unconditional_joint_spr} we need to recall
Lemma 6.10 from \cite{AKT2}.

\begin{lem}\label{Lemma_finding_convex_means}
  Let $X$ be a Banach space, $l$ a positive integer, $\ff_1,\ldots,\ff_l$ regular thin families
 and $L$ an infinite subset of $\nn$.  Assume that   for every $i=1,...,l$,
 there exists  a continuous map  $\widehat{\varphi}_i:\widehat{\ff_i}\upharpoonright L\to X$, where $X$ is considered with the weak topology.
 Then for every   $\varepsilon>0$ there exists a completely plegma connected family
  $(F_i)_{i=1}^l$  such that    $F_i\subseteq [\ff_i\upharpoonright L]^{<\infty}$
  and
  $\mathrm{dist}\Big(\widehat{\varphi}_i(\varnothing), \mathrm{conv
  }\widehat{\varphi}_i(F_i)\Big)<\varepsilon,$
  for every $i=1,...,l$.
\end{lem}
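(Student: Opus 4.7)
The argument closely mirrors the proof of Theorem 6.11 in \cite{AKT2}. Fix $n\in\nn$, $F\subseteq\{1,\ldots,n\}$ and $(a_j)_{j=1}^n\in[-1,1]$; it suffices to show that $\|\sum_{j\in F}a_je_j\|_*\mik\|\sum_{j=1}^na_je_j\|_*$. Fix $\ee>0$ and pick $N\in\nn$ large enough that $\delta_N<\ee$. For each $i=1,\ldots,k$, let $\widehat{\varphi}_i:\widehat{\ff}\upharpoonright M\to X$ (with $X$ in the weak topology) be a continuous map witnessing the subordination of $(x_s^i)_{s\in\ff\upharpoonright M}$; weak nullness forces $\widehat{\varphi}_i(\varnothing)=0$. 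Shrink $M$ to some $L\in[M]^\infty$ with $\min L\meg M(N)$, so that $\min s\meg M(N)$ for every $s\in\ff\upharpoonright L$.

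The heart of the proof is a single application of Lemma \ref{Lemma_finding_convex_means} with $l=n$, $\ff_j=\ff$ for each $j$, and the $n$ continuous maps $\widehat{\varphi}_{i_k(j)}|_{\widehat{\ff}\upharpoonright L}$, $1\mik j\mik n$. This produces a completely plegma connected family $(F_j)_{j=1}^n$ of finite subsets $F_j=\{s_{j,r}\}_r$ of $\ff\upharpoonright L$, together with convex weights $(\lambda_{j,r})_r$ (summing to $1$ in $r$) such that the convex combination $v_j:=\sum_r\lambda_{j,r}x_{s_{j,r}}^{i_k(j)}$ satisfies $\|v_j\|<\ee$ for every $j=1,\ldots,n$. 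For each $j\in F$, pick an arbitrary representative $s_j\in F_j$.

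Set $u:=\sum_{j\in F}a_jx_{s_j}^{i_k(j)}+\sum_{j\notin F}a_jv_j$ and expand the convex sums to get
$$u=\sum_{(r_j)_{j\notin F}}\Big(\prod_{j\notin F}\lambda_{j,r_j}\Big)\Big(\sum_{j\in F}a_jx_{s_j}^{i_k(j)}+\sum_{j\notin F}a_jx_{s_{j,r_j}}^{i_k(j)}\Big).$$
For each tuple $(r_j)_{j\notin F}$, the $n$-tuple $(t_j)_{j=1}^n$ defined by $t_j=s_j$ for $j\in F$ and $t_j=s_{j,r_j}$ otherwise is a plegma family in $\ff\upharpoonright L$ by complete plegma connectedness, and $t_1(1)\meg M(N)$. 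The joint $\ff$-model inequality then bounds the norm of each bracketed vector by $\|\sum_{j=1}^na_je_j\|_*+\delta_N$, and convexity of the norm yields $\|u\|\mik\|\sum_{j=1}^na_je_j\|_*+\ee$. Since $\|u-\sum_{j\in F}a_jx_{s_j}^{i_k(j)}\|\mik\sum_{j\notin F}|a_j|\,\|v_j\|\mik n\ee$, we deduce $\|\sum_{j\in F}a_jx_{s_j}^{i_k(j)}\|\mik\|\sum_{j=1}^na_je_j\|_*+(n+1)\ee$.

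A second application of the joint $\ff$-model condition, this time to the truncated coefficients $\tilde{a}_j$ defined by $\tilde{a}_j=a_j$ for $j\in F$ and $\tilde{a}_j=0$ for $j\notin F$, together with any plegma family $(s_j)_{j=1}^n$ obtained by selecting one representative from each $F_j$, yields $\|\sum_{j\in F}a_jx_{s_j}^{i_k(j)}\|\meg\|\sum_{j\in F}a_je_j\|_*-\ee$. Chaining the two bounds gives $\|\sum_{j\in F}a_je_j\|_*\mik\|\sum_{j=1}^na_je_j\|_*+(n+2)\ee$, and letting $\ee\downarrow 0$ concludes the proof. The chief subtlety is engineering Lemma \ref{Lemma_finding_convex_means} so that one and the same completely plegma connected family simultaneously provides near-zero convex replacements at coordinates $j\notin F$ and furnishes the fixed representatives at coordinates $j\in F$; this is forced precisely by the lack of a spreading property for joint $\ff$-models, which would otherwise permit free reindexing.
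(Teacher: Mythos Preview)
Your proposal does not prove Lemma \ref{Lemma_finding_convex_means} at all; it proves Theorem \ref{unconditional_joint_spr} instead. The very first line (``it suffices to show that $\|\sum_{j\in F}a_je_j\|_*\mik\|\sum_{j=1}^na_je_j\|_*$'') is the suppression-unconditionality condition for the joint $\ff$-model $(e_n)_n$, which is the content of Theorem \ref{unconditional_joint_spr}, and you then \emph{invoke} Lemma \ref{Lemma_finding_convex_means} (``The heart of the proof is a single application of Lemma \ref{Lemma_finding_convex_means}\ldots'') as a black box. So the argument is circular as a proof of the lemma: you assume the statement you are asked to establish.

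Lemma \ref{Lemma_finding_convex_means} is a purely combinatorial/topological statement about subordinated $\ff$-sequences: given continuous maps $\widehat{\varphi}_i:\widehat{\ff_i}\upharpoonright L\to(X,w)$, one must \emph{construct} finite sets $F_i\subseteq\ff_i\upharpoonright L$ that are completely plegma connected and for which some convex combination of $\{\widehat{\varphi}_i(s):s\in F_i\}$ lies within $\varepsilon$ of $\widehat{\varphi}_i(\varnothing)$. No joint model $(e_n)_n$, no coefficients $a_j$, and no unconditionality are involved. The paper does not reprove this lemma---it is quoted verbatim as Lemma 6.10 of \cite{AKT2}---and the actual proof there proceeds by induction on $l$ using Mazur's theorem together with the plegma-path combinatorics of regular thin families; none of that appears in your write-up. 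As a proof of Theorem \ref{unconditional_joint_spr} your argument is essentially the paper's (with the minor variation that you handle all indices $j\notin F$ simultaneously via a product of convex weights, whereas the paper suppresses one coordinate at a time), but that is a different statement.
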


\begin{proof}[Proof of Theorem \ref{unconditional_joint_spr}]
Fix
$l\in\nn$, some $1\mik p\mik l$ and $a_1,\ldots,a_l$ in $[-1,1]$. It
suffices to show  that for every $\varepsilon >0$ we have
\begin{equation}
\label{eq018}
\Big{\|}\sum_{\substack{j=1\\j\neq
p}}^l a_je_j\Big{\|}_*<\Big{\|}\sum_{j=1}^la_je_j\Big{\|}_*+\varepsilon.
\end{equation}
Fix $\varepsilon>0$. Since
$((x^i_s)_{s\in\ff\upharpoonright M})_{i=1}^k$
generates $(e_n)_n$ as a joint $\ff$-model, by passing to a
tail of $M$ if it is necessary, we may assume that
\begin{equation}
\label{eq019}
\Bigg{|} \Big{\|}\sum_{\substack{j=1\\j\neq p}}^l a_j
x^{i_k(j)}_{s_j}\Big{\|} -\Big{\|}
\sum_{\substack{j=1\\j\neq p}}^l a_je_j\Big{\|}_* \Bigg{|}<\frac{\varepsilon}{3}
\;\;\text{and}\;\; \Bigg{|}\Big{\|}\sum_{j=1}^l a_j
x^{i_k(j)}_{s_j}\Big{\|}-\Big{\|}\sum_{j=1}^l a_je_j\Big{\|}_*
\Bigg{|}<\frac{\varepsilon}{3},
\end{equation}
 for every plegma $l$-tuple $(s_j)_{j=1}^l$ in $\ff\upharpoonright
M$, where $i_k$ is as defined in \eqref{eq010}.
The first inequality in \eqref{eq019} is obtained by setting $a_p$
to be $0$.
Since for every $1\mik i\mik k$ the $\ff$-subsequence
$(x^i_s)_{s\in\ff\upharpoonright M}$ is subordinated, there exists a continuous map
$\widehat{\varphi}_i:\widehat{\ff}\upharpoonright M\to X$ such that
$\widehat{\varphi}_i(s)=x^i_s$ for every $s\in\ff\upharpoonright M$.
Moreover, for every $1\mik i\mik k$,
since $(x^i_s)_{s\in\ff\upharpoonright M}$ is weakly convergent to
$\widehat{\varphi}_i(\varnothing)$ and by assumption weakly null,
we have that
$\widehat{\varphi}_i(\varnothing)=0$. Therefore by
Lemma \ref{Lemma_finding_convex_means} (for ``$\ff_j=\ff$'' and
``$\widehat{\varphi}_j=\widehat{\varphi}_{i_k(j)}$'', for all $j=1,...,l$),
there exist a completely plegma connected family $(F_j)_{j=1}^l$
and a sequence $(x_j)_{j=1}^l$ in $X$
such that $F_j$ is subset of $[\ff\upharpoonright M]^{<\infty}$,
$x_j\in \mathrm{conv}\
\widehat{\varphi}_{i_k(j)}(F_j)$ and $\| x_j\|<\varepsilon/3$, for all
$1\mik j\mik l$. Let $(\mu_s)_{s\in F_p}$ be a sequence in $[0,1]$
such that $\sum_{s\in F_p}\mu_s=1$ and $x_p=\sum_{s\in
F_p}\mu_s\widehat{\varphi}_{i_k(p)}(s)$ and for each $j\neq p$ choose
$s_j\in
F_j$.
Clearly $\|x_p\|=\|\sum_{s\in F_p}\mu_s x_s\|<\frac{\varepsilon}{3}$.
Since $(F_j)_{j=1}^l$ is completely plegma connected we have
for every $s$ in $F_p$ that the $l$-tuple
$(s_1,\ldots,s_{p-1},s,s_{p+1},\ldots, s_l)$ is a plegma family.
Therefore by  \eqref{eq019} we have
  \begin{equation}
  \label{eq020}
  \begin{split}
  \Big{\|}\sum_{\substack{j=1\\j\neq p}}^la_je_j\Big{\|}_* &
  \mik\Big{\|}\sum_{\substack{j=1\\j\neq p}}^la_jx^{i_k(j)}_{s_j}\Big{\|}
  +\frac{\varepsilon}{3}
  \mik \Big{\|}\sum_{\substack{j=1\\j\neq p}}^la_jx^{i_k(j)}_{s_j}
  +a_px_p\Big{\|}+
  |a_p|\frac{\varepsilon}{3}+\frac{\varepsilon}{3}\\
  &\mik \Big{\|}\sum_{\substack{j=1\\j\neq p}}^la_jx^{i_k(j)}_{s_j}
  +a_p\sum_{s\in F_p}\mu_s x^{i_k(p)}_s\Big{\|}+\frac{2\varepsilon}{3}\\
  &\mik\sum_{s\in F_p}\mu_s \Big{\|}\sum_{\substack{j=1\\j\neq p}}^la_jx^{i_k(j)}_{s_j}
  +a_p x^{i_k(p)}_s\Big{\|}+\frac{2\varepsilon}{3}\\
  &\mik \sum_{s\in F_p}\mu_s \Big{(}\Big{\|}\sum_{j=1}^l a_je_j \Big{\|}_*
  +\frac{\varepsilon}{3} \Big{)}+\frac{2\varepsilon}{3} =\Big{\|}\sum_{j=1}^l a_je_j \Big{\|}_* +\varepsilon.
  \end{split}
  \end{equation}
  The proof is complete.
\end{proof}
The following lemma allows us to establish the semi-lattice structure of $SM_\xi^w(X)$.
\begin{lem}
  \label{lem_semi_lattice}
  Let $X$ be a Banach space and $\xi $  a countable ordinal.
  Also let $(e_n^1)_n,...,(e_n^k)_n$ be elements of $SM_\xi^w(X)$. Then there exists $(e_n)_n$ in $SM_\xi^w(X)$
  such that
  \begin{equation}
  \label{eq021}
 \max_{1\mik i\mik k}\Big\|\sum_{j=1}^la_je_j^i\Big\|\mik
  \Big\|\sum_{j=1}^la_je_j\Big\|
  \mik \sum_{i=1}^k \Big\|\sum_{j=1}^l a_je^i_j \Big\|\;\;\;\;
  \Big(\mik k\cdot\max_{1\mik i\mik k}\Big\|\sum_{j=1}^la_je_j^i\Big\|\Big)
\end{equation}
for every choice of $l\in \nn$ and $a_1,...,a_l\in\rr$.
\end{lem}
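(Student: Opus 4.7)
The plan is to apply the joint $\ff$-model machinery of this section to an appropriate $k$-tuple of $\ff$-sequences, and then realize the desired spreading model in $SM_\xi^w(X)$ via a concrete $\ff$-sequence built from these. First, by iterating Corollary \ref{generating_subord_sm}, I fix a regular thin family $\ff$ of order $\xi$ together with an infinite $M\subseteq\nn$, and for each $i=1,\ldots,k$ a subordinated weakly null $\ff$-sequence $(x^i_s)_{s\in\ff}$ such that $(x^i_s)_{s\in\ff\upharpoonright M}$ generates $(e^i_n)_n$ as an $\ff$-spreading model. Applying Theorem \ref{existence_joint_sm}, I pass to $L\subseteq M$ so that $((x^i_s)_{s\in\ff\upharpoonright L})_{i=1}^k$ generates a joint $\ff$-model $(\bar e_n)_n$, which is $1$-suppression unconditional by Theorem \ref{unconditional_joint_spr}.

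The target sequence is the block-sum $\tilde e_n := \sum_{i=1}^k \bar e_{(n-1)k+i}$ in the underlying seminormed space. To realize $(\tilde e_n)_n$ as an element of $SM_\xi^w(X)$, I construct an $\ff$-sequence $(y_s)_{s\in\ff\upharpoonright L'}$ by $y_s := \sum_{i=1}^k x^i_{\sigma_i(s)}$, where $\sigma_1,\ldots,\sigma_k:\ff\upharpoonright L'\to\ff$ are continuous maps chosen so that for every plegma $l$-family $(s_j)_{j=1}^l$ in $\ff\upharpoonright L'$ the concatenation $(\sigma_1(s_1),\sigma_2(s_1),\ldots,\sigma_k(s_1),\sigma_1(s_2),\ldots,\sigma_k(s_l))$ is a plegma $lk$-family in $\ff$. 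The existence of such $\sigma_i$'s, which is the main technical obstacle, follows from a careful use of Theorem \ref{continuous_plegma_embending} together with the regular-thin structure of $\ff$ (passing, if needed, to a convenient regular thin family of the same order $\xi$). Granted the $\sigma_i$'s, the sequence $(y_s)$ is subordinated and weakly null, and a direct computation using the joint $\ff$-model property shows that $(y_s)_{s\in\ff\upharpoonright L'}$ generates $(\tilde e_n)_n$ as an $\ff$-spreading model.

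Finally, the inequalities follow routinely from the joint model property together with the properties of the $(e^i_n)$'s and $(\bar e_n)$. For the upper bound, applying in turn the joint model property, the triangle inequality with respect to the cyclic decomposition $j\mapsto i_k(j)$, and the $\ff$-spreading model property of each $(x^i_s)$, gives
\begin{equation*}
\Big\|\sum_{j=1}^l a_j \tilde e_j\Big\|
\approx \Big\|\sum_{j,i} a_j x^i_{t_{(j-1)k+i}}\Big\|
\mik \sum_{i=1}^k \Big\|\sum_{j=1}^l a_j x^i_{t_{(j-1)k+i}}\Big\|
\approx \sum_{i=1}^k \Big\|\sum_{j=1}^l a_j e^i_j\Big\|,
\end{equation*}
where $(t_p)_{p=1}^{lk}$ is a sufficiently deep plegma family in $\ff$ and each subfamily $(t_{(j-1)k+i})_{j=1}^l$ is itself plegma in $\ff$. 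For the lower bound, fix $i_0$; applying $1$-suppression unconditionality of $(\bar e_n)$ followed by the joint $\ff$-model and the $\ff$-spreading model property of $(x^{i_0}_s)$,
\begin{equation*}
\Big\|\sum_{j=1}^l a_j \tilde e_j\Big\|
= \Big\|\sum_{j,i} a_j \bar e_{(j-1)k+i}\Big\|
\meg \Big\|\sum_{j=1}^l a_j \bar e_{(j-1)k+i_0}\Big\|
\approx \Big\|\sum_{j=1}^l a_j e^{i_0}_j\Big\|.
\end{equation*}
The only nontrivial step is the construction of the $\sigma_i$'s ensuring the plegma structure lifts correctly; the rest is a routine application of the joint $\ff$-model theory developed in this section.
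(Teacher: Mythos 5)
Your overall strategy is the same as the paper's: generate each $(e^i_n)_n$ by subordinated weakly null $\ff$-subsequences over a common family $\ff$ of order $\xi$ (Corollary \ref{generating_subord_sm}), pass to a joint $\ff$-model via Theorem \ref{existence_joint_sm}, use its suppression $1$-unconditionality (Theorem \ref{unconditional_joint_spr}) for the lower estimate and the triangle inequality for the upper one, and realize the candidate spreading model by an $\ff$-sequence of the form $\sum_{i=1}^k x^i_{(\cdot)}$ evaluated along $k$ ``interleaved'' copies of $\ff$. The two norm estimates you sketch are exactly the paper's computation.

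However, there is a genuine gap at precisely the point you flag as ``the main technical obstacle'': the existence of the maps $\sigma_1,\ldots,\sigma_k$ is asserted, not proved, and it does not follow from Theorem \ref{continuous_plegma_embending}. That theorem produces a \emph{single} continuous, plegma-preserving surjection $\widehat{\g}\upharpoonright N'\to\widehat{\ff}\upharpoonright M'$ between two families of comparable order; it says nothing about producing $k$ maps $\sigma_1,\ldots,\sigma_k$ on the same family whose interleaved images $(\sigma_1(s_1),\ldots,\sigma_k(s_1),\ldots,\sigma_1(s_l),\ldots,\sigma_k(s_l))$ form plegma $kl$-tuples for every plegma $l$-tuple $(s_j)_{j=1}^l$, which is the combinatorial heart of the lemma. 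Moreover, for $(y_s)$ to be subordinated you need each $\sigma_i$ to extend continuously to $\widehat{\ff}\upharpoonright L'$ (continuity on the discrete set $\ff\upharpoonright L'$ is vacuous), and this too must come out of the construction. The paper fills this gap by a direct construction rather than by Theorem \ref{continuous_plegma_embending}: using Corollary \ref{large_F} one may assume $\ff$ is very large in $M$; one then chooses successive blocks $F_1<F_2<\cdots$ in $M$ each of cardinality $k$, sets $N=\{\max F_n:n\in\nn\}$, and for $s\in\ff\upharpoonright N$ defines $t^i_s$ to be the unique element of $\ff$ that is an initial segment of $\{F_{s(q)}(i):1\mik q\mik |s|\}$ (existence by very-largeness and the spreading property of $\widehat{\ff}$, uniqueness by thinness). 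The block structure of the $F_n$'s is what makes $(t^1_{s_1},\ldots,t^k_{s_1},\ldots,t^1_{s_l},\ldots,t^k_{s_l})$ plegma whenever $(s_j)_{j=1}^l$ is, and it also makes $s\mapsto t^i_s$ compatible with initial segments, which yields subordination and weak nullness of $z_s=\sum_{i=1}^k x^i_{t^i_s}$ (needed for the resulting spreading model to lie in $SM^w_\xi(X)$, a point your proposal also leaves implicit). Without this construction, or an equivalent one, your argument is incomplete; with it, the rest of your proof goes through and coincides with the paper's.
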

Before we proceed to the proof of Lemma \ref{lem_semi_lattice}
let us recall some notation. A family of finite subsets $\mathcal{H}$ of $\nn$ is called \emph{large}
(resp. \emph{very large}) in an infinite subset $M$
of $\nn$, if every further infinite subset $L$ of $\nn$ contains an element
(resp. has an initial segment) in $\mathcal{H}$.
It is immediate that both the notions of large and very large are hereditary,
i.e. if $\mathcal{H}$ is large (resp. very large) in some $M$
then  $\mathcal{H}$ is large (resp. very large) in any $L$ infinite
subset of $M$. We will need the following well known result due to F. Galvin
and K. Prikry \cite{GP} (it is actually a reformulation provided in
\cite{Go}).
\begin{thm} \label{Galvin prikry}
Let $\mathcal{H}$ be a family of finite subsets of $\nn$ and $M$ an
infinite subset of $\nn$. If $\mathcal{H}$
is large in $M$ then there exists an infinite subset $L$ of $M$ such that $\mathcal{H}$
is very large in $L$.
\end{thm}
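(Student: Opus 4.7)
The plan is to introduce the natural open set encoding the ``very large'' property, invoke the open-set version of the Galvin--Prikry (equivalently, Nash--Williams) partition theorem, and use the largeness hypothesis to rule out the trivial alternative.

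First, define
\[
\mathcal{A} = \{L\in[\nn]^\infty : L \text{ has an initial segment in } \mathcal{H}\}.
\]
Because membership in $\mathcal{A}$ is witnessed by a finite initial segment, $\mathcal{A}$ is open in the product topology on $\{0,1\}^\nn$ (identifying infinite subsets of $\nn$ with their characteristic functions, as the excerpt already does for regular thin families). The assertion ``$\mathcal{H}$ is very large in $L$'' is then exactly $[L]^\infty \subseteq \mathcal{A}$.

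Next I would apply the open-set Galvin--Prikry dichotomy: for any open $\mathcal{A}\subseteq[\nn]^\infty$ and any $M\in[\nn]^\infty$, there exists $L\in[M]^\infty$ such that either $[L]^\infty\subseteq\mathcal{A}$ or $[L]^\infty\cap\mathcal{A}=\emptyset$. If the first alternative holds, $\mathcal{H}$ is very large in $L$ and we are done. Assume we are in the second alternative, so that no infinite subset of $L$ has an initial segment in $\mathcal{H}$. By the heritability of largeness noted in the excerpt, $\mathcal{H}$ is still large in $L$, so there exists $s\in\mathcal{H}$ with $s\subseteq L$. Setting $n=\max s$, form
\[
L' = s\cup\{l\in L : l>n\}.
\]
Then $L'\in[L]^\infty$ and, since every element of $L'\setminus s$ exceeds $n$, the set $s$ is an initial segment of $L'$; hence $L'\in\mathcal{A}$, contradicting $[L]^\infty\cap\mathcal{A}=\emptyset$. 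Therefore only the first alternative can occur, which is the conclusion of the theorem.

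The delicate ingredient is the open-set Galvin--Prikry dichotomy itself, which the authors explicitly cite from \cite{GP} (in the reformulation from \cite{Go}) and so may be invoked as a black box in the body of the paper. A self-contained proof proceeds by a diagonal fusion: starting from $M$, one builds a nested sequence of infinite sets and candidate minima $l_1<l_2<\cdots$, at each stage using openness of $\mathcal{A}$ together with a pigeonhole on whether every infinite extension of the current finite initial segment eventually enters $\mathcal{A}$; the resulting diagonal set $L=\{l_1,l_2,\ldots\}$ is homogeneous. All other steps in the proof above are purely combinatorial manipulations using the definitions, and the rearrangement $L'=s\cup(L\cap(n,\infty))$ is the only construction required.
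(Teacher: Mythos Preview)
The paper does not supply its own proof of this theorem: it is stated as a known result, attributed to Galvin and Prikry \cite{GP} in the reformulation given in \cite{Go}, and used as a black box. Your argument is a correct derivation of this reformulation from the classical open-set Galvin--Prikry dichotomy; the reduction via the open set $\mathcal{A}$ and the contradiction obtained from $L'=s\cup\{l\in L:l>\max s\}$ are exactly the standard way to pass between the two formulations. One small remark: when you say the authors ``explicitly cite'' the open-set dichotomy from \cite{GP}, note that what they actually cite is precisely the present statement, not the open-set version you invoke; your appeal to the open-set form is of course legitimate since that is what \cite{GP} proves, but the justification is external to the paper rather than internal to it.
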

It is easy to see that every regular thin family $\ff$ is
large in $\nn$. Thus by the above Theorem we have the following.
\begin{cor}
  \label{large_F}
  Let $\ff$ be a regular thin family and $M$ an infinite subset of $\nn$.
  Then there exists an infinite subset $L$ of $M$ such that $\ff$ is very large in
  $L$.
\end{cor}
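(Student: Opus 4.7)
The plan is a direct invocation of the Galvin--Prikry theorem. The paragraph immediately preceding the corollary records the (easy) fact that every regular thin family $\ff$ is large in $\nn$, i.e.\ every infinite subset of $\nn$ contains an element of $\ff$. Since every $N\in[M]^\infty$ is in particular an element of $[\nn]^\infty$, this immediately gives that $\ff$ is large in $M$ as well. Applying Theorem \ref{Galvin prikry} with $\mathcal{H}=\ff$ then produces an $L\in[M]^\infty$ in which $\ff$ is very large, which is precisely the statement of the corollary.

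The only point worth spelling out is the assertion, used as a black box above, that $\ff$ is large in $\nn$. Writing $\ff$ as the set of maximal (under inclusion) elements of a regular family $\mathcal{R}$ and fixing $N\in[\nn]^\infty$, compactness of $\mathcal{R}$ inside $\{0,1\}^\nn$ forces some finite initial segment of $N$ to lie outside $\mathcal{R}$; otherwise the characteristic function of $N$ would belong to the closure of $\mathcal{R}$, contradicting that $\mathcal{R}$ consists only of finite sets. Taking $k$ minimal with $\{N(1),\ldots,N(k)\}\notin \mathcal{R}$, the set $s_0=\{N(1),\ldots,N(k-1)\}$ lies in $\mathcal{R}$, and one extends $s_0$ inside $N$ to a maximal element $s^*$ of $\mathcal{R}\cap 2^N$; the process of extension terminates because, again by compactness, $\mathcal{R}$ contains no infinite increasing chain.

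What remains is to promote the maximality of $s^*$ in $\mathcal{R}\cap 2^N$ to maximality in $\mathcal{R}$ itself, which is the only slightly delicate point. Suppose $t\in\mathcal{R}$ with $s^*\subsetneq t$. By hereditariness, $s^*\cup\{j\}\in\mathcal{R}$ for some $j\notin s^*$. A two-step application of the spreading property---first, if necessary, replacing $j$ by some integer exceeding $\max s^*$, and then replacing that integer by a sufficiently large $N(q)$---yields $s^*\cup\{N(q)\}\in\mathcal{R}$, a strict extension of $s^*$ inside $N$, contradicting the maximality of $s^*$ in $\mathcal{R}\cap 2^N$. Hence $s^*\in \ff\cap 2^N$, which establishes that $\ff$ is large in $\nn$; the corollary then follows by the one-line argument of the first paragraph. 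Since the heavy lifting is done by Theorem \ref{Galvin prikry}, I do not anticipate any real obstacle.
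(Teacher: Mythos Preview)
Your proposal is correct and follows exactly the paper's approach: the corollary is deduced from Theorem~\ref{Galvin prikry} together with the observation (stated without proof in the paper) that every regular thin family is large in $\nn$. You simply supply the details of this observation, which the paper leaves as ``easy to see''.
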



\begin{proof}[Proof of Lemma \ref{lem_semi_lattice}]
  Let $\ff$ be a regular thin family of order $\xi$.
  Applying Corollary \ref{generating_subord_sm}, we obtain an infinite subset $M_0$
  of $\nn$ and $\ff$-sequences $(x^1_s)_{s\in\ff},...,(x^k_s)_{s\in\ff}$ in $X$ such that for every
  $1\mik i\mik k$ the $\ff$ subsequence $(x^i_s)_{s\in\ff\upharpoonright M_0}$ is subordinated, weakly null
  and generates $(e_n^i)_n$ as an $\ff$-spreading model.
  By Corollary \ref{large_F}, we may assume that $\ff$ is very large in $M_0$.
  Using Theorem \ref{existence_joint_sm},
  we pass to an infinite subset $M$ of $M_0$ such that the $k$-tuple
  $((x^i_s)_{s\in\ff\upharpoonright M})_{i=1}^k$ generates
  a joint $\ff$-model $(v_n)_n$. Theorem \ref{unconditional_joint_spr} yields that
  $(v_n)_n$ is (suppression) 1-unconditional.  We pick a sequence $(F_n)_n$ of finite subsets of $M$
  such that $\max F_n<\min F_{n+1}$ and $F_n$ is of cardinality $k$ for all $n\in\nn$.
  We set $N=\{\max F_n:n\in\nn\}$. Clearly $N$ is an infinite subset of $M$. For every
  $s\in\ff\upharpoonright N$ and every $1\mik i\mik k$ we set $t^i_s$ to be the unique element in $\ff$ being
  initial segment of $\{F_{s(q)}(i):1\mik q\mik |s|\}$. Observe that the existence of $t_s^i$ is guaranteed
  by the spreading property of $\widehat{\ff}$ and the fact that $\ff$ is very large, while its uniqueness is a consequence of the fact that $\ff$ is
  thin. Also observe that for every $l\in\nn$ and every plegma family $\mathbf{s}=(s_j)_{j=1}^l$ in $\ff\upharpoonright N$ of length $l$, setting $(t^{\mathbf{s}}_q)_{q=1}^{kl}=(t^1_{s_1},t^2_{s_1},...,t^k_{s_1},...,t^1_{s_l},t^2_{s_l},...,t^k_{s_l})$,
  we have that $(t^{\mathbf{s}}_q)_{q=1}^{kl}$ is a plegma family in
  $\ff\upharpoonright M$. For every $s\in\ff\upharpoonright N$ we set
  $z_s=\sum_{i=1}^kx^i_{t_s^i}$. Pass to an infinite subset $L$ of $N$ such that
  $(z_s)_{s\in\ff\upharpoonright L}$ generates an $\ff$-spreading model $(e_n)_n$.
  The following claim holds.

  \noindent\textbf{Claim:} Let $l\in\nn$, $a_1,...,a_l$ in $[-1,1]$ and $\ee>0$. Then
  \begin{equation}
  \label{eq022}
  \max_{1\mik i\mik k}\Big\|\sum_{j=1}^la_je_j^i\Big\|-\ee\mik
  \Big\|\sum_{j=1}^la_je_j\Big\|
  \mik \sum_{i=1}^k \Big\|\sum_{j=1}^l a_je^i_j \Big\|+\frac{\ee}{2}.
  \end{equation}
  \begin{proof}[Proof of Claim]
  Let $(b_q)_{q=1}^{kl}$ defined by $b_{(j-1)k+i}=a_j$ for all $1\mik j\mik l$ and $1\mik i\mik k$. We pass to a final segment $L'$ of $L$ such that
  for every plegma family $\mathbf{s}=(s_j)_{j=1}^l$ in $\ff\upharpoonright L'$ we have that
  \begin{equation}
    \label{eq023}
    \Bigg|\Big\|\sum_{j=1}^l a_je_j \Big\|-\Big\|\sum_{j=1}^l a_jz_{s_j}  \Big\|\Bigg|<\ee/4,
  \end{equation}
  \begin{equation}
    \label{eq024}
    \Bigg|\Big\|\sum_{j=1}^l a_je^i_j \Big\|-\Big\|\sum_{j=1}^l a_jx^i_{t^i_{s_j}}  \Big\|\Bigg|<\ee/4k
  \end{equation}
  for all $1\mik i\mik k$ and
  \begin{equation}
    \label{eq025}
    \Bigg|\Big\|\sum_{q\in F} b_qv_q \Big\|-\Big\|\sum_{q\in F} b_q x^{i_k(q)}_{t^{\mathbf{s}}_q}  \Big\|\Bigg|<\ee/4
  \end{equation}
  for all $F\subseteq\{1,...,kl\}$. We set $F_i=\{(j-1)k+i:j=1,...,l\}$ for all
  $1\mik i\mik k$. Let us also fix some plegma family $\mathbf{s}=(s_j)_{j=1}^l$ in $\ff\upharpoonright L'$. Recall that by Theorem \ref{unconditional_joint_spr} it follows that
  $(v_n)_n$ is (suppression) 1-unconditional. Hence for every $1\mik i\mik k$, we have
  \begin{equation}
    \label{eq026}
    \begin{split}
      \Big\|\sum_{j=1}^la_je_j^i\Big\|-\ee&
      \stackrel{\eqref{eq024}}{\mik}\Big\|\sum_{j=1}^l a_jx^i_{t^i_{s_j}}\Big\|-\frac{3\ee}{4}
      =\Big\|\sum_{q\in F_i} b_q x^{i_k(q)}_{t^{\mathbf{s}}_q}\Big\|-\frac{3\ee}{4}\\
      &\stackrel{\eqref{eq025}}{\mik}\Big\|\sum_{q\in F_i} b_qv_q \Big\|-\frac{\ee}{2}
      \mik \Big\|\sum_{q=1}^{kl} b_qv_q \Big\|-\frac{\ee}{2}\\
      &\stackrel{\eqref{eq025}}{\mik}\Big\|\sum_{q=1}^{kl} b_q x^{i_k(q)}_{t^{\mathbf{s}}_q}\Big\|-\frac{\ee}{4}
      = \Big\|\sum_{j=1}^l a_jz_{s_j}\Big\|-\frac{\ee}{4}
      \stackrel{\eqref{eq023}}{\mik}\Big\|\sum_{j=1}^l a_je_j \Big\|.
    \end{split}
  \end{equation}
  Since \eqref{eq026} holds for all $1\mik i\mik k$ we get
  \begin{equation}
    \label{eq027}
    \max_{1\mik i\mik k}\Big\|\sum_{j=1}^la_je_j^i\Big\|-\ee\mik
    \Big\|\sum_{j=1}^la_je_j\Big\|.
  \end{equation}
  Making use of the triangle inequality we have that
\begin{equation}
    \label{eq028}
    \begin{split}
      \Big\|\sum_{j=1}^l a_je_j \Big\|
      &\stackrel{\eqref{eq023}}{\mik}\Big\|\sum_{j=1}^l a_jz_{s_j}\Big\|+\frac{\ee}{4}
      =\Big\|\sum_{q=1}^{kl} b_q x^{i_k(q)}_{t^{\mathbf{s}}_q}\Big\|+\frac{\ee}{4}\\
      &\mik \sum_{i=1}^k \Big\|\sum_{q\in F_i} b_q x^{i_k(q)}_{t^{\mathbf{s}}_q}\Big\|+\frac{\ee}{4}
      =\sum_{i=1}^k \Big\|\sum_{j=1}^l a_j x^{i_k(F_i(j))}_{t^{\mathbf{s}}_{F_i(j)}}\Big\|+\frac{\ee}{4}\\
      &=\sum_{i=1}^k \Big\|\sum_{j=1}^l a_j x^i_{t^{\mathbf{s}}_{F_i(j)}}\Big\|+\frac{\ee}{4}
      \stackrel{\eqref{eq024}}{\mik}\sum_{i=1}^k \Big\|\sum_{j=1}^l a_je^i_j \Big\|+\frac{\ee}{2}.
    \end{split}
  \end{equation}
  Clearly \eqref{eq022} follows by \eqref{eq027} and \eqref{eq028}.
\end{proof}
\noindent By the Claim above the result is immediate.
\end{proof}

The above lemma has the following immediate consequence.
\begin{thm}\label{lattice}
  Let $X$ be a Banach space and $\xi $ a countable ordinal.
  Then $\mathbf{SM}_\xi^w(X)$ is a (upper) semi-lattice.
\end{thm}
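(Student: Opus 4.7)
The plan is to deduce the theorem as a direct corollary of Lemma~\ref{lem_semi_lattice} applied with $k=2$. Recall that an upper semi-lattice is a partially ordered set in which every pair of elements admits a least upper bound; by induction this is equivalent to existence of finite suprema, so it suffices to handle the case of two elements.

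Given two classes in $\mathbf{SM}_\xi^w(X)$, choose representatives $(e_n^1)_n,(e_n^2)_n\in SM_\xi^w(X)$ and invoke Lemma~\ref{lem_semi_lattice} to obtain some $(e_n)_n\in SM_\xi^w(X)$ satisfying
\[
\max_{i=1,2}\Big\|\sum_{j=1}^l a_j e_j^i\Big\|\;\mik\;\Big\|\sum_{j=1}^l a_j e_j\Big\|\;\mik\;\sum_{i=1}^{2}\Big\|\sum_{j=1}^l a_j e_j^i\Big\|
\]
for all $l\in\nn$ and $(a_j)_{j=1}^l$ in $\rr$. The left inequality shows that $(e_n)_n$ $1$-dominates each $(e_n^i)_n$, so $(e_n^i)_n\preccurlyeq(e_n)_n$ for $i=1,2$; hence the equivalence class $[(e_n)_n]$ is an upper bound for $[(e_n^1)_n]$ and $[(e_n^2)_n]$ in $\mathbf{SM}_\xi^w(X)$.

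To see this upper bound is the least one, consider any $(e'_n)_n\in SM_\xi^w(X)$ with $(e_n^1)_n\preccurlyeq(e'_n)_n$ and $(e_n^2)_n\preccurlyeq(e'_n)_n$, witnessed by constants $C_1,C_2>0$. Chaining the right inequality above with these two dominations yields
\[
\Big\|\sum_{j=1}^l a_j e_j\Big\|\;\mik\;\sum_{i=1}^{2}\Big\|\sum_{j=1}^l a_j e_j^i\Big\|\;\mik\;(C_1+C_2)\Big\|\sum_{j=1}^l a_j e'_j\Big\|,
\]
so $(e_n)_n\preccurlyeq(e'_n)_n$. Thus $[(e_n)_n]$ is indeed the supremum of $[(e_n^1)_n]$ and $[(e_n^2)_n]$ in $\mathbf{SM}_\xi^w(X)$, and the proof is complete. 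There is no real obstacle here: the substantive work (construction of a joint $\ff$-model and its interaction with the norms) is already packaged in Lemma~\ref{lem_semi_lattice}, and the theorem is essentially pure bookkeeping in the quotient poset.
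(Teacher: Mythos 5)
Your proof is correct and follows exactly the route the paper intends: Theorem~\ref{lattice} is stated there as an immediate consequence of Lemma~\ref{lem_semi_lattice}, and your argument (left inequality gives an upper bound, right inequality chained with the domination constants gives leastness in $\mathbf{SM}_\xi^w(X)$) is precisely the omitted bookkeeping.
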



\begin{thm}\label{lattice_countable}
  Let $X$ be a Banach space and $\xi$  a countable ordinal.
  Let $(c_k)_k$ be a sequence of positive reals satisfying $\sum_{k=1}^\infty c_k^{-1}<\infty$
  and for every $k\in\nn$
  let $(e_n^k)_n$ be a normalized sequence that belongs to $SM_\xi^w(X)$.
  Then there exist $(e_n)_n$ in $SM_\xi^w(X)$ and a real $K$ with  $\max_{k\in\nn}c_k^{-1}\mik K\mik\sum_{k=1}^\infty c_k^{-1}$ satisfying the following.
  \begin{enumerate}
    \item[(i)] The sequence $(e_n)_n$ is normalized.
    \item[(ii)] The sequence $(e_n)_n$  $(c_kK)$-dominates $(e^k_n)_n$,
  for all $k\in\nn$.
    \item[(iii)] For every $l\in\nn$ and every choice of reals $a_1,...,a_l$ in $[-1,1]$ we have that $\|\sum_{j=1}^la_je_j\|\mik K^{-1}\sum_{k=1}^\infty c_k^{-1}\|\sum_{j=1}^la_je^k_j\|$.
  \end{enumerate}
\end{thm}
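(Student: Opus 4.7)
My plan is to extend the construction of Lemma \ref{lem_semi_lattice} from a finite family to a countable one by exploiting the summability of $(c_k^{-1})$ to control infinite sums.

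First, fix a regular thin family $\ff$ of order $\xi$. Corollary \ref{generating_subord_sm} together with a diagonal argument yields an infinite set $M_0\in[\nn]^\infty$ and, for each $k\in\nn$, a subordinated weakly null $\ff$-sequence $(x^k_s)_{s\in\ff}$ realized by a weak-continuous map $\widehat\varphi_k:\widehat\ff\upharpoonright M_0\to X$ with $\widehat\varphi_k(\emptyset)=0$, such that $(x^k_s)_{s\in\ff\upharpoonright M_0}$ generates $(e^k_n)_n$. Corollary \ref{large_F} and passing to tails let me assume $\ff$ is very large in $M_0$ and $\|x^k_s\|\mik 1+2^{-k}$ for every $k$ and $s\in\ff\upharpoonright M_0$. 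A further diagonal application of Theorem \ref{existence_joint_sm} to finite tuples yields $M\in[M_0]^\infty$ such that, for each $k\in\nn$, the $k$-tuple $((x^i_s)_{s\in\ff\upharpoonright M})_{i=1}^k$ admits a joint $\ff$-model $(w^{(k)}_n)_n$; by Theorem \ref{unconditional_joint_spr}, each $(w^{(k)}_n)$ is suppression $1$-unconditional.

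Next I replicate the block construction from the proof of Lemma \ref{lem_semi_lattice} with variable block size: pick finite sets $F_1<F_2<\cdots$ in $M$ with $|F_n|=n$ and $\max F_n<\min F_{n+1}$, and set $N=\{\max F_n:n\in\nn\}$. For $s\in\ff\upharpoonright N$ with $\min s=\max F_{n_1}$, write $K(s)=n_1$ and, for $k\mik K(s)$, let $t^k_s$ be the unique initial segment in $\ff$ of $\{F_{n_q}(k):1\mik q\mik|s|\}$ (existence and uniqueness as in Lemma \ref{lem_semi_lattice}). Define
\[
z_s=\sum_{k=1}^{K(s)}c_k^{-1}\,x^k_{t^k_s}.
\]
The family $(z_s)_{s\in\ff\upharpoonright N}$ is norm-bounded by $\sum_k c_k^{-1}(1+2^{-k})<\infty$, and it is subordinated and weakly null: each piece $\widehat t\mapsto\widehat\varphi_k(t^k_{\widehat t})$ is weak-continuous and vanishes at $\emptyset$, the tail $\sum_{k>K}c_k^{-1}(1+2^{-k})$ is uniformly small for large $K$, and a dominated convergence argument assembles the pieces into a weak-continuous extension $\widehat\psi:\widehat\ff\upharpoonright N\to X$ with $\widehat\psi(\emptyset)=0$. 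Choose $L\in[N]^\infty$ such that $(z_s)_{s\in\ff\upharpoonright L}$ generates an $\ff$-spreading model $(v_n)_n$, which therefore belongs to $SM_\xi^w(X)$.

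Setting $K=\|v_1\|$ and $e_n=v_n/K$ gives a normalized element of $SM_\xi^w(X)$. For conclusion (iii), the triangle inequality on $\sum_j a_j z_{s_j}=\sum_j\sum_k a_j c_k^{-1} x^k_{t^k_{s_j}}$, combined with the observation that for each fixed $k$ the family $(t^k_{s_j})_j$ is plegma in $\ff\upharpoonright M$ with minima tending to infinity, yields in the spreading limit $\|\sum_j a_j v_j\|\mik\sum_k c_k^{-1}\|\sum_j a_j e^k_j\|$; the interchange of limit and $k$-sum is justified by summability. For conclusion (ii), fix $k_0$ and any $k\meg k_0$, and consider a plegma $(s_j)_{j=1}^m$ in $\ff\upharpoonright L$ with $\min s_1$ large enough that $K(s_1)\meg k$. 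The concatenation $(s'_q)_{q=1}^{mk}=(t^1_{s_1},\dots,t^k_{s_1},\dots,t^1_{s_m},\dots,t^k_{s_m})$ is a plegma family in $\ff\upharpoonright M$ (same argument as in Lemma \ref{lem_semi_lattice}), and with $b_{(j-1)k+i}=a_j c_i^{-1}$ we obtain $\sum_j a_j z_{s_j}=\sum_q b_q x^{i_k(q)}_{s'_q}$. The joint-$\ff$-model relation for $(w^{(k)}_n)$ approximates this by $\sum_q b_q w^{(k)}_q$; by suppression-$1$-unconditionality, the latter norm is at least $c_{k_0}^{-1}\|\sum_j a_j w^{(k)}_{(j-1)k+k_0}\|$, which the joint-model relation together with the fact that $(x^{k_0})$ generates $(e^{k_0})$ identifies, up to error $\ee$, with $c_{k_0}^{-1}\|\sum_j a_j e^{k_0}_j\|$. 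Passing to the spreading limit gives $\|\sum_j a_j v_j\|\meg c_{k_0}^{-1}\|\sum_j a_j e^{k_0}_j\|$. The bounds $\max_k c_k^{-1}\mik K\mik\sum_k c_k^{-1}$ then follow by specializing (ii) and (iii) to $l=m=1$, $a_1=1$. The main technical delicacy is harmonizing the two independent extractions---the diagonalization producing the joint models for all $k$ and the subsequent spreading-model extraction on $(z_s)$---so that they live over a common family of plegma structures; this is precisely what the summability of $(c_k^{-1})$ enables, by keeping the tails of $z_s$ negligible regardless of which truncation $K(s)$ is in effect.
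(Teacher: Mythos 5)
Your construction follows the paper's proof in all essentials: the same regular thin family $\ff$, the same blocks $F_1<F_2<\cdots$ of increasing cardinality with $N=\{\max F_n\}$, the vectors $z_s=\sum_{i\mik n_s}c_i^{-1}x^i_{t^i_s}$, the joint $\ff$-models extracted by diagonalization together with their suppression $1$-unconditionality (Theorems \ref{existence_joint_sm} and \ref{unconditional_joint_spr}), and the normalization $K=\|v_1\|$ with the two bounds on $K$ read off from (ii) and (iii) exactly as in the paper.

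One step, as written, is not correct and needs the paper's repair. In your argument for (ii) the identity $\sum_j a_j z_{s_j}=\sum_q b_q x^{i_k(q)}_{s'_q}$ is false: $z_{s_j}$ also contains the coordinates $i=k+1,\dots,K(s_j)$, and for a \emph{fixed} $k\meg k_0$ (``any $k\meg k_0$'') the discarded part has norm up to $\sum_j|a_j|\sum_{i>k}c_i^{-1}\sup_s\|x^i_s\|$, an error which does \emph{not} shrink as the minima of the plegma family grow, so the inequality $\|\sum_j a_j v_j\|\meg c_{k_0}^{-1}\|\sum_j a_j e^{k_0}_j\|$ does not follow for that fixed $k$. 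You must fix $\ee$ and the coefficients first and then choose $k$ so large that this tail is below $\ee$; this is precisely the paper's choice of $k'$ in \eqref{eq031} and the estimate \eqref{eq036}. Your closing remark about keeping the tails of $z_s$ negligible shows you are aware of the mechanism, but it has to be invoked at this point of the proof (with $k$ depending on $\ee$ and $\sum_j|a_j|$), not only in the subordination argument. A second, cosmetic point: you take joint models of the unscaled sequences $(x^i_s)$ and then use coefficients $b_q=a_jc_i^{-1}$, which need not lie in $[-1,1]$ since finitely many $c_i$ may be smaller than $1$, whereas Definition \ref{joint_spreading_model} only controls coefficients in $[-1,1]$; either rescale the coefficients (the additive error then only scales by a fixed constant) or, as the paper does, pass to $y^i_s=c_i^{-1}x^i_s$ before extracting the joint models. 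Finally, your claim that a single $M_0$ works simultaneously for all $k$ should be stated in the tail sense provided by the diagonal of the decreasing sets $M'_k$, which is also what justifies the uniform bounds $\|x^i_{t^i_s}\|\mik B_i$ used in the tail estimate.
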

\begin{proof}
  Let $\ff$ be a regular thin family of order $\xi$.
  Applying Corollary \ref{large_F} we obtain an infinite subset $M_0$ of $\nn$ such that
  $\ff$ is very large in $M_0$.
  Using
  Corollary \ref{generating_subord_sm}, we inductively obtain a decreasing sequence $(M'_k)_k$
  of infinite subsets of $M_0$ and for every $k\in\nn$ a seminormalized $\ff$-sequence
  $(x^k_s)_{s\in\ff}$ such that for every $k\in\nn$ the $\ff$-subsequence
  $(x^k_s)_{s\in\ff\upharpoonright M'_k}$
  is subordinated, weakly null and generates $(e_n^k)_n$
  as an $\ff$-spreading model. Moreover, setting $B_k=\sup\{\|x_s^k\|:s\in\ff\upharpoonright M'_k\}$,
  we may assume that $B_k\mik2$, for all $k\in\nn$.
  Hence
  $\sum_{k=1}^\infty c_k^{-1}B_k<\infty$.
  For every $k\in\nn$ and
  $s\in \ff$ we set $y_s^k=c_k^{-1}x_s^k$. Clearly for every $k\in\nn$ the $\ff$-sequence
  $(y^k_s)_{s\in\ff}$ is seminormalized, while the $\ff$-subsequence
  $(y^k_s)_{s\in\ff\upharpoonright M'_k}$
  is subordinated, weakly null and generates $(c_k^{-1}e_n^k)_n$
  as an $\ff$-spreading model. Let $M'$ be an infinite subset of
  $M_0$ such that $M'(k)\in M'_k$ for all $k\in\nn$. Using
  Theorem \ref{existence_joint_sm} we obtain a decreasing sequence
  $(M_k)_k$ of infinite subsets of $M'$ such that for every $k\in\nn$ the
  $k$-tuple $((y^i_s)_{s\in\ff\upharpoonright M_k})_{i=1}^k$ generates
  a joint $\ff$-model $(v_n^k)_n$. Observe that, by Theorem
  \ref{unconditional_joint_spr}, the sequence $(v_n^k)_n$ is (suppression)
  1-unconditional for all $k\in\nn$. Let $M$ be an infinite subset
  of $M'$ such that $M(k)\in M_k$ for all $k\in\nn$. Fix a sequence $(F_n)_n$ of finite subsets
  of $M$ such that for every $n\in\nn$ $\max F_n<\min F_{n+1}$ and
  $F_n$ is of cardinality $n$. Set $L=\{\max F_n: n\in\nn\}$.
  For every $s\in\ff\upharpoonright L$ let $n_s$ to be
  such that $\min s=\max F_{n_s}$ and $t_s^i$ to be the unique initial
  segment of $\{F_{s(j)}(i):j=1,...,|s|\}$ belonging to $\ff$, for
  all $1\mik i\mik n_s$. The existence of $t_s^i$
  follows by the fact that $\ff$ is very large in $L$ and $\widehat{\ff}$ is spreading,
  while the uniqueness of $t_s^i$ follows by the fact that $\ff$ is thin.
  Observe that for every $s\in\ff\upharpoonright L$ and every $1\mik i\mik n_s$,
  $\min t^i_s\meg M(i)\meg M'(i)\in M'_i$ and therefore
  \begin{equation}
    \label{eq029}
    \|x^i_{t_s^i}\|\mik B_i.
  \end{equation}
  For every $s$ in $\ff\upharpoonright L$ set
  $z_s=\sum_{i=1}^{n_s}y^i_{t_s^i}$ and pass to an infinite subset
  $L'$ of $L$ such that the $\ff$-subsequence
  $(z_s)_{s\in\ff\upharpoonright L'}$ generates an $\ff$-spreading model
  $(e'_n)_n$. Since $({c}_k^{-1} B_k)_k$ is summable and for
  each $k\in\nn$ the $\ff$-subsequence $(x_s^k)_{s\in\ff\upharpoonright L'}$
  is subordinated and weakly null, by \eqref{eq029}, one can easily derive that
  $(z_s)_{s\in\ff\upharpoonright L'}$ is also subordinated and
  weakly null. Thus $(e'_n)_n$ belongs to $SM_\xi^w(X)$.
  First we prove the following claim.

  \noindent\textbf{Claim:}
  The sequence $(e'_n)_n$ $c_k$-dominates $(e^k_n)_n$,
  for all $k\in\nn$.
  \begin{proof}[Proof of Claim]
  Let us fix some $k\in\nn$.
  Pick $l\in\nn$, real numbers $a_1,...,a_l$ in $[-1,1]$
  and $\ee>0$. We will show that
  \begin{equation}
    \label{eq030}
    \Big\|\sum_{j=1}^la_je^k_j\Big\|\mik c_k\Big\|\sum_{j=1}^la_je'_j\Big\|+\ee.
  \end{equation}
  Pick $k'\meg k$ such that
  \begin{equation}
    \label{eq031}
    \sum_{q=k'+1}^\infty c_q^{-1}B_q<\frac{\ee}{5c_k\sum_{j=1}^l|a_j|}.
  \end{equation}
  Let $(b_q)_{q=1}^{k'l}$ defined by $b_{(j-1)k'+i}=a_j$ for all $1\mik j\mik l$ and $1\mik i\mik k'$.
  Moreover, for every plegma family $\mathbf{s}=(s_j)_{j=1}^l$ in $\ff\upharpoonright L$ with $n_{s_1}\meg L(k')$ we set
  $(t_q^{\mathbf{s}})_{q=1}^{k'l}=(t^1_{s_1},...,t^{k'}_{s_1},...,t^1_{s_l},...,t^{k'}_{s_l})$.
  Observe that for every plegma
  family $\mathbf{s}=(s_j)_{j=1}^l$ in $\ff\upharpoonright L$ with $n_{s_1}\meg L(k')$ both the $k'l$-tuple $(t_q^{\mathbf{s}})_{q=1}^{k'l}$ and the
  $\big(\sum_{j=1}^ln_{s_j}\big)$-tuple
  $(t^1_{s_1},...,t^{n_{s_1}}_{s_1},...,t^1_{s_l},...,t^{n_{s_l}}_{s_l})$
  are plegma families.
  We pass to an infinite subset $L'$ of $L$ such that $\min L'\meg L(k')$ and for every plegma
  family $\mathbf{s}=(s_j)_{j=1}^l$ in $\ff\upharpoonright L'$  we have that
  \begin{equation}
    \label{eq032}
    \Bigg|\Big\|\sum_{j=1}^la_je'_j\Big\|-\Big\|\sum_{j=1}^la_jz_{s_j}\Big\|\Bigg|<\frac{\ee}{5c_k},
  \end{equation}
  \begin{equation}
    \label{eq033}
    \Bigg|c^{-1}_k\cdot\Big\|\sum_{j=1}^la_je^{k}_j\Big\|-\Big\|\sum_{j=1}^la_jy^k_{t^k_{s_j}}\Big\|\Bigg|<\frac{\ee}{5c_k},
  \end{equation}
  and
  \begin{equation}
    \label{eq034}
    \Bigg|\Big\|\sum_{q\in F}b_qy^{i_{k'}(q)}_{t^{\mathbf{s}}_q}\Big\|- \Big\|\sum_{q\in F}b_qv^{k'}_q\Big\|\Bigg| <\frac{\ee}{5c_k}
  \end{equation}
  for all $F\subseteq\{1,...,k'l\}$, where $i_{k'}$ is as defined in \eqref{eq010}.
  Fix a plegma family $\mathbf{s}=(s_j)_{j=1}^l$
  in $\ff\upharpoonright L'$ and set $F_k=\{(j-1)k'+k:j=1,...,l\}$.
  By the unconditionality of $(v_n^{k'})_n$ we get that
  \begin{equation}
    \label{eq035}
    \begin{split}
      \Big\|\sum_{q=1}^{k'l}b_q y^{i_{k'}(q)}_{t^{\mathbf{s}}_q}\Big\|
      &\stackrel{\eqref{eq034}}{\meg}\Big\|\sum_{q=1}^{k'l}b_q v^{k'}_q\Big\|-\frac{\ee}{5c_k}
      \meg \Big\|\sum_{q\in F_k}b_q v^{k'}_q\Big\|-\frac{\ee}{5c_k}\\
      &\stackrel{\eqref{eq034}}{\meg}\Big\|\sum_{q\in F_k}b_q y^{i_{k'}(q)}_{t^{\mathbf{s}}_q}\Big\|-\frac{2\ee}{5c_k}
      =\Big\|\sum_{j=1}^la_jy^k_{t^k_{s_j}}\Big\|-\frac{2\ee}{5c_k}\\
      &\stackrel{\eqref{eq033}}{\meg}
      c^{-1}_k\cdot\Big\|\sum_{j=1}^la_je^{k}_j\Big\|-\frac{3\ee}{5c_k}.
    \end{split}
  \end{equation}
  Moreover we have the following.
  \begin{equation}
    \label{eq036}
    \begin{split}
      c_k\Big\|\sum_{j=1}^la_je'_j\Big\|+\ee
      &\stackrel{\eqref{eq032}}{\meg}c_k\Big\|\sum_{j=1}^la_jz_{s_j}\Big\|+\frac{4\ee}{5}
      =c_k\Big\|\sum_{j=1}^la_j\sum_{i=1}^{n_{s_j}}y^i_{t_{s_j}^i}\Big\|+\frac{4\ee}{5}\\
      &\meg c_k\Big\|\sum_{j=1}^la_j\sum_{i=1}^{k'}y^i_{t_{s_j}^i}\Big\|
        -c_k\sum_{j=1}^l|a_j|\cdot \Big\|\sum_{i=k'+1}^{n_{s_j}}y^i_{t_{s_j}^i}\Big\|+\frac{4\ee}{5}\\
      &\meg c_k\Big\|\sum_{j=1}^la_j\sum_{i=1}^{k'}y^i_{t_{s_j}^i}\Big\|
        -c_k\sum_{j=1}^l|a_j| \sum_{i=k'+1}^{n_{s_j}}c_i^{-1}\Big\|x^i_{t_{s_j}^i}\Big\|+\frac{4\ee}{5}\\
      &\stackrel{\eqref{eq029},\eqref{eq031}}{\meg}c_k\Big\|\sum_{j=1}^la_j\sum_{i=1}^{k'}y^i_{t_{s_j}^i}\Big\|
        +\frac{3\ee}{5}
      =c_k\Big\|\sum_{q=1}^{k'l}b_q y^{i_{k'}(q)}_{t^{\mathbf{s}}_q}\Big\|+\frac{3\ee}{5}.
    \end{split}
  \end{equation}
  Clearly \eqref{eq030} follows by \eqref{eq035} and \eqref{eq036}.
  Since \eqref{eq030} holds for every choice of natural numbers $k,l$, real numbers $a_1,...,a_l$ in $[-1,1]$
  and $\ee>0$, the claim follows.
\end{proof}

Let $K=\|e'_1\|$ and $(e_n)_n=(K^{-1}e'_n)_n$.
  Then $(e_n)_n$ is a normalized sequence belonging to $SM_\xi^w(X)$, i.e.
  assertion (i) of the theorem is satisfied.
  By the claim, assertion (ii) of the theorem is immediate and $K=\|e_1\|\meg c_k^{-1}\|e_1^k\|=c_k^{-1}$ for all $k\in\nn$.
  Thus $K\meg\max_{k\in\nn}c_k^{-1}$. Finally by the definition of $(e'_n)_n$ and $(e_n)_n$ it is easy to check that
  assertion (iii) of the lemma is also true and $K\mik\sum_{k=1}^\infty c^{-1}_k$. The proof is complete.
\end{proof}

\section{From countable to uncountable increasing sequences of spreading models}\label{section_S}

Using identical arguments
as the ones used in the proof of Theorem 2.2 from \cite{Sa} one can prove the following.

\begin{thm}\label{from_countable_to_uncountable}
  Let $\mathcal{C}$ be a family of normalized Schauder basic sequences satisfying the following.
  \begin{enumerate}
    \item[(i)] For every $(x_n)_n$ and $(y_n)_n$ in $\mathcal{C}$ there exists $(z_n)_n$ in $C$ such that $(z_n)_n$ $2$-dominates both $(x_n)_n$ and $(y_n)_n$.
    \item[(ii)] For every sequence $(c_k)_k$ of positive reals satisfying $\sum_{k=1}^\infty c_k^{-1}<\infty$
  and every infinite sequence $(x^1_n)_n,(x^2_n)_n,...$ in $\mathcal{C}$ there exist $(x_n)_n$ in $\mathcal{C}$ and a constant $K$ with  $\max_{k\in\nn}c_k^{-1}\mik K\mik\sum_{k=1}^\infty c_k^{-1}$ satisfying the following.
  \begin{enumerate}
    \item[(a)] The sequence $(x_n)_n$  $(c_kK)$-dominates $(x^k_n)_n$,
  for all $k\in\nn$.
    \item[(b)] For every $l\in\nn$ and every choice of reals $a_1,...,a_l$ in $[-1,1]$ we have that $\|\sum_{j=1}^la_jx_j\|\mik K^{-1}\sum_{k=1}^\infty c_k^{-1}\|\sum_{j=1}^la_jx^k_j\|$.
  \end{enumerate}
  \end{enumerate}
  If $\mathcal{C}$ contains a strictly increasing, with respect to domination, sequence of length $\omega$, then $\mathcal{C}$ contains a
  strictly increasing sequence of length $\omega_1$.
\end{thm}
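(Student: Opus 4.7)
My plan is transfinite induction on $\alpha<\omega_1$, producing a strictly $\preccurlyeq$-increasing chain $(y^\alpha_n)_{\alpha<\omega_1}$ in $\mathcal{C}$, along the same lines as the argument of \cite{Sa}.

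The basic observation powering the whole construction is that property (ii), when applied to a strictly $\preccurlyeq$-increasing $\omega$-sequence, automatically yields a \emph{strict} upper bound: if $(z^k_n)_k$ is strictly increasing in $\mathcal{C}$ and $(z_n)\in\mathcal{C}$ dominates every $(z^k_n)$, then $(z_n)\sim(z^{k_0}_n)$ would force $(z^{k_0+1}_n)\preccurlyeq(z_n)\sim(z^{k_0}_n)$, contradicting $(z^{k_0}_n)\prec(z^{k_0+1}_n)$. Thus, once an upper bound has been produced at a limit stage, strict domination is automatic.

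The difficulty is the successor step, since hypotheses (i) and (ii) only produce upper bounds, which could collapse to being equivalent to the element being extended. To get around this, I would carry along at stage $\alpha$, in addition to $(y^\alpha_n)$, an entire auxiliary strictly $\preccurlyeq$-increasing $\omega$-chain $(w^{\alpha,k}_n)_{k\geq 0}$ in $\mathcal{C}$ with $(y^\alpha_n)=(w^{\alpha,0}_n)$. Initialize with $(w^{0,k}_n):=(x^{k+1}_n)$, which is strictly increasing by hypothesis. At a successor $\alpha+1$, simply shift the reserve: $(w^{\alpha+1,k}_n):=(w^{\alpha,k+1}_n)$. At a limit $\alpha$ with cofinal $\alpha_m\nearrow\alpha$, apply property (ii) separately to each of the $\omega$-sequences $(w^{\alpha_m,k}_n)_m$, for $k\geq 0$, with weights $(c_m^{(k)})$ chosen diagonally, to obtain $(w^{\alpha,k}_n)$.

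At the limit $\alpha$, strict domination $(y^\beta_n)\prec(y^\alpha_n)$ for every $\beta<\alpha$ is then immediate from the opening observation applied to the strictly increasing sequence $(y^{\alpha_m}_n)_m$. The genuine technical point is verifying that the new chain $(w^{\alpha,k}_n)_k$ is still strictly increasing. Combining clause (b) of (ii) at level $k$ with clause (a) at level $k+1$ and the inductive constants $D_{m,k}$ witnessing $(w^{\alpha_m,k}_n)\preccurlyeq(w^{\alpha_m,k+1}_n)$ produces the inequality $(w^{\alpha,k}_n)\preccurlyeq(w^{\alpha,k+1}_n)$ with constant bounded by $K_k^{-1}K_{k+1}\sum_m (c_m^{(k)})^{-1}D_{m,k}c_m^{(k+1)}$, while strictness of this new inequality follows by one further application of the opening observation. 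The main obstacle I anticipate is exactly this diagonal choice of weights $(c_m^{(k)})_{m,k}$: one must arrange, simultaneously for every $k\geq 0$, both $\sum_m (c_m^{(k)})^{-1}<\infty$ and $\sum_m (c_m^{(k)})^{-1}D_{m,k}c_m^{(k+1)}<\infty$ using only inductive data available at the single limit stage. This is the bookkeeping carried out in \cite{Sa}, which is why the authors can invoke that argument essentially verbatim.
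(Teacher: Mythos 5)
Your global architecture---a transfinite induction carrying an auxiliary strictly increasing $\omega$-chain, shifting it at successor stages, and using the observation that a (ii)-upper bound of a strictly increasing chain is automatically a \emph{strict} upper bound of each of its members---is the right skeleton (the paper itself offers no details, deferring to the proof of Theorem 2.2 in \cite{Sa}, which has this shape), and your verification that the elements produced at a limit stage dominate one another, by combining (ii)(b) at level $k$, the inductive constants $D_{m,k}$ and (ii)(a) at level $k+1$ with a summable choice of weights, is fine as far as it goes. The problem is the clause you dispose of in passing: ``strictness of this new inequality follows by one further application of the opening observation.'' That is a genuine gap, and it is the heart of the matter. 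The opening observation can only separate an upper bound from the \emph{members} of the strictly increasing chain it dominates; it can never separate two newly constructed upper bounds $w^{\alpha,k}$ and $w^{\alpha,k+1}$ from each other, since neither is a member of a strictly increasing chain dominated by the other. Worse, with the staircase relations $w^{\alpha_m,k+1}\preccurlyeq w^{\alpha_{m+1},k}$ that your scheme implicitly maintains, every upper bound of the $k$-th column of the grid is an upper bound of the $(k+1)$-st column and conversely, so the two families to which you apply (ii) have exactly the same cone of upper bounds. Nothing in the order-theoretic data, nor in the estimates you invoke (every lower bound you have for $w^{\alpha,k+1}$ factors through some $w^{\alpha_m,k+1}$, which is itself dominated by a constant multiple of $w^{\alpha,k}$ via the staircase and (ii)(a)), rules out that all these column-wise suprema are equivalent; in that case the new auxiliary chain is flat and the induction dies at the next successor step.

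Closing this gap is exactly where the quantitative content of the hypotheses must enter, and it is the delicate part of the argument in \cite{Sa} that the authors are invoking: the precise two-sided control $\max_k c_k^{-1}\leq K\leq\sum_k c_k^{-1}$, the freedom to choose the weights relative to \emph{all} domination constants accumulated so far, and hypothesis (i) with its explicit constant $2$ (which your proposal never uses at all) are what allow one to produce, above the limit element, a provably strictly increasing sequence---strictness being certified by explicit coefficient vectors extracted from the strict inequalities $w^{\alpha_m,k}\prec w^{\alpha_m,k+1}$ in the grid and shown to survive the weighted sums appearing in (ii)(b). By contrast, the difficulty you single out as the main obstacle, namely arranging simultaneously $\sum_m (c^{(k)}_m)^{-1}<\infty$ and $\sum_m (c^{(k)}_m)^{-1}D_{m,k}c^{(k+1)}_m<\infty$, is routine: for each fixed $m$ set, say, $c^{(k+1)}_m=c^{(k)}_m/(2^mD_{m,k})$ with $c^{(0)}_m$ chosen sufficiently large. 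So the proposal nails the easy bookkeeping but waves away the one point where the theorem is actually proved.
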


By Lemma \ref{lem_semi_lattice} and Theorem \ref{lattice_countable} the collection $\mathcal{C}$ of the normalized elements of $SM_\xi^w(X)$ satisfies the assumptions of Theorem \ref{from_countable_to_uncountable}. Hence we have the following.
\begin{cor}\label{long_chains}
  Let $X$ be a Banach space and $\xi$ a countable ordinal.
  If $SM_\xi^w(X)$ contains a strictly increasing sequence of length $\omega$, then $\mathcal{C}$ contains a
  strictly increasing sequence of length $\omega_1$.
\end{cor}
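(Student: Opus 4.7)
The plan is to obtain Corollary \ref{long_chains} as a direct application of Theorem \ref{from_countable_to_uncountable} to the family $\mathcal{C}$ of normalized elements of $SM_\xi^w(X)$. Thus the task reduces to verifying that $\mathcal{C}$ satisfies hypotheses (i) and (ii) of that theorem, and to certifying that a strictly increasing $\omega$-sequence in $SM_\xi^w(X)$ yields one inside $\mathcal{C}$.

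For hypothesis (ii), the content of Theorem \ref{lattice_countable} is exactly what is required: given a summable sequence $(c_k^{-1})_k$ of positive reals and a sequence of normalized $(e_n^k)_n$ in $SM_\xi^w(X)$, the element $(e_n)_n$ produced there is normalized by clause (i) of that theorem, and clauses (ii) and (iii) coincide with the sub-conditions (a) and (b) of hypothesis (ii) in Theorem \ref{from_countable_to_uncountable}, with the constant $K$ already lying in the required interval.

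For hypothesis (i), I would apply Lemma \ref{lem_semi_lattice} with $k=2$ to two normalized sequences $(x_n)_n, (y_n)_n\in \mathcal{C}$. The lemma supplies $(\tilde z_n)_n\in SM_\xi^w(X)$ which $1$-dominates both $(x_n)_n$ and $(y_n)_n$, while $\|\tilde z_1\|\mik\|x_1\|+\|y_1\|=2$ by the right-hand inequality in \eqref{eq021}, and $\|\tilde z_1\|\meg\|x_1\|=1$ since $(\tilde z_n)_n$ dominates $(x_n)_n$. Setting $(z_n)_n=\|\tilde z_1\|^{-1}(\tilde z_n)_n\in\mathcal{C}$, one obtains a normalized sequence which $\|\tilde z_1\|$-dominates, hence $2$-dominates, each of $(x_n)_n$ and $(y_n)_n$, verifying (i).

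Finally, suppose $SM_\xi^w(X)$ contains a strictly increasing sequence $(e_n^k)_n$, $k\in\nn$. By Theorem 6.11 of \cite{AKT2} recalled at the beginning of Section \ref{section_AOST}, each term is either $1$-suppression unconditional or identically seminormed zero; the latter type is dominated by every element of $SM_\xi^w(X)$, so it can appear only as the (at most unique) bottom term, which we discard. The remaining terms are seminormalized and after multiplying each by $\|e_1^k\|^{-1}$ we obtain a strictly increasing sequence inside $\mathcal{C}$, the strict inequalities being unaffected by these positive rescalings. An application of Theorem \ref{from_countable_to_uncountable} then yields a strictly increasing sequence of length $\omega_1$ in $\mathcal{C}\subseteq SM_\xi^w(X)$. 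The only real bookkeeping is tracking that the $2$-domination constant from Lemma \ref{lem_semi_lattice} survives normalization, which I expect to be the only mildly non-automatic point.
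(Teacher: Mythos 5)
Your proposal is correct and follows essentially the same route as the paper, which deduces the corollary by checking that the collection $\mathcal{C}$ of normalized elements of $SM_\xi^w(X)$ satisfies the hypotheses of Theorem \ref{from_countable_to_uncountable}, hypothesis (i) via Lemma \ref{lem_semi_lattice} and hypothesis (ii) via Theorem \ref{lattice_countable}. The extra bookkeeping you carry out (rescaling to stay in $\mathcal{C}$ while keeping the $2$-domination constant, and discarding a possible trivial bottom term of the given $\omega$-chain) is exactly the routine verification the paper leaves implicit.
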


\section{On the richness of $SM_\xi^w(X)$} \label{section_D}
In this section we will generalize some results from \cite{Do}.
We will code the set $SM_\xi^w(X)$, for $X$ separable, as an analytic subset of $[\nn]^\infty$. Recall that a binary relation $\preccurlyeq$ on some set $A$ is called a pre-partial order if the following are satisfied.
\begin{enumerate}
  \item[(i)] For every $a\in A$, we have that $a\preccurlyeq a$ and
  \item[(ii)] for every $a,b,c\in A$ with $a\preccurlyeq b$ and $b\preccurlyeq c$, we have that $a\preccurlyeq c$.
\end{enumerate}
As usual, every binary relation on a set $A$ can be viewed as a subset of the Cartesian $A\times A$. In the case that $A$ is a topological space, we endow the Cartesian product with the product topology of $A$.

\begin{prop}\label{descriptive}
  Let $\preccurlyeq$ be an $F_\sigma$ pre-partial order on $[\nn]^\infty$ and $\thickapprox$ the equivalence relation defined by $a\thickapprox b$ iff $a\preccurlyeq b$ and $b\preccurlyeq a$. Let $A$ be an analytic subset of
  $[\nn]^\infty$ such that either $A$ does not contain any strictly increasing sequence of length $\omega$,
  or $A$ contains a strictly increasing sequence of length $\omega_1$. Then the following hold.
  \begin{enumerate}
    \item[(i)] If $A/_\thickapprox$ is uncountable then $A$ contains an antichain of size the continuum, i.e. there exists a subset $P\subset A$ of cardinality $\mathfrak{c}$ such that for every $a\neq b$ in $P$ we have that
        $a\not\preccurlyeq b$ and $b\not\preccurlyeq a$.
    \item[(ii)] If $A$ contains a strictly decreasing sequence of
        length $\omega_1$, then $A$ contains a strictly increasing sequence of length $\omega_1$.
    \item[(iii)] If $A$ does not contain a strictly increasing sequence of length $\omega$, then there exists a countable ordinal $\zeta$ such that $A$ does not contain any decreasing sequence of length $\zeta$.
  \end{enumerate}
\end{prop}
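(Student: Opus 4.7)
The plan is to prove the three assertions in the order (iii), (ii), (i), since each relies on the previous.

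For (iii), I would apply the Kunen--Martin theorem. Since $\preccurlyeq$ is $F_\sigma$, its complement is $G_\delta$, so the strict order $\prec$ and its converse $\succ$ are both Borel. The hypothesis that $A$ contains no strictly increasing sequence of length $\omega$ is precisely the well-foundedness of $\succ$ restricted to $A$. Applied to this well-founded analytic relation on the analytic set $A$, Kunen--Martin bounds the rank of $\succ|_A$ by some countable ordinal $\zeta_0$. A strictly decreasing sequence of length $\zeta$ in $A$ witnesses rank $\meg\zeta$ for $\succ|_A$, so taking $\zeta=\zeta_0+1<\omega_1$ provides the desired bound on decreasing sequences.

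For (ii), suppose $A$ contains a strictly decreasing sequence of length $\omega_1$. If $A$ had no strictly increasing $\omega$-sequence, part (iii) would produce $\zeta<\omega_1$ bounding the lengths of all decreasing sequences in $A$, which is incompatible with the $\omega_1$-decreasing sequence. Hence $A$ contains a strictly increasing $\omega$-sequence, and the disjunctive hypothesis of the proposition immediately yields a strictly increasing sequence of length $\omega_1$.

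For (i), suppose $A/_\thickapprox$ is uncountable. Since $\thickapprox$ is $F_\sigma$, Silver's dichotomy for analytic equivalence relations implies $A/_\thickapprox$ has cardinality exactly $\mathfrak{c}$, and one may extract a perfect set of pairwise $\thickapprox$-inequivalent representatives in $A$. The comparability graph $\{(a,b)\in A\times A:a\ne b,\ a\preccurlyeq b\text{ or }b\preccurlyeq a\}$, being $F_\sigma$, admits the Kechris--Solecki--Todorcevic $G_0$-dichotomy (equivalently, Galvin's open coloring theorem for $F_\sigma$ graphs on Polish spaces): either there is a perfect independent set, i.e.\ a perfect antichain of size $\mathfrak{c}$ and we are done, or $A$ is a countable union $A=\bigcup_n C_n$ of Borel chains. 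In this second alternative, pigeonhole yields some chain $C_n$ with $C_n/_\thickapprox$ uncountable, and we derive a contradiction from the disjunctive hypothesis together with (ii) and (iii). In the case where $A$ contains no strictly $\omega$-increasing sequence, (iii) forces every chain in $A$ to be reverse-well-founded of countable rank, hence countable, contradicting $C_n/_\thickapprox$ uncountable. In the case where $A$ contains a strictly $\omega_1$-increasing sequence, pigeonholing this sequence into the chain cover places an $\omega_1$-well-ordered subset inside some analytic $C_n$ carrying a Borel linear order, which is incompatible with Kunen--Martin applied to the well-founded part of this linear order on $C_n$.

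The main obstacle is executing the last step cleanly: ensuring that the presence of an $\omega_1$-well-ordered subset inside an analytic set with a Borel linear order genuinely yields a contradiction with Kunen--Martin requires a careful descriptive set theoretic argument, reducing the situation to a well-founded Borel relation on an analytic set of uncountable rank, following the strategy developed by Dodos in \cite{Do}.
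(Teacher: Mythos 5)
Your parts (iii) and (ii) are correct and are essentially the arguments the paper has in mind when it refers to Theorem 3 of \cite{Do}: (iii) is the Kunen--Martin boundedness theorem applied to the analytic well-founded relation $\succ\cap(A\times A)$ (well-founded precisely because $A$ has no strictly increasing $\omega$-sequence), and (ii) follows at once from (iii) together with the standing disjunction in the hypothesis. One small remark: in (i) you invoke ``Silver's dichotomy for analytic equivalence relations''; Silver's theorem is for coanalytic (in particular Borel) relations, which suffices here since $\thickapprox$ is $F_\sigma$, but what you actually need is the version relativized to the analytic set $A$ --- this is exactly Lemma 5 of \cite{Do}, which is what the paper cites.

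Part (i), however, contains a genuine gap, located exactly where you flag your ``main obstacle''. First, the dichotomy you invoke is mis-attributed: the Kechris--Solecki--Todorcevic $G_0$-dichotomy says that an analytic graph either has countable Borel chromatic number (a countable cover by Borel \emph{independent} sets, i.e.\ antichains) or admits a continuous homomorphism from $G_0$; neither horn is ``perfect independent set'' nor ``countable union of Borel chains'', and Galvin's partition theorem would only give a single Cantor set homogeneous for comparability or incomparability, not a chain cover of $A$. The dichotomy you actually need --- perfect antichain versus a countable union of Borel chains --- is the Harrington--Marker--Shelah theorem on Borel orderings, and even then one must check it applies to a quasi-order restricted to an analytic set $A$, with the antichain produced inside $A$; for general $F_\sigma$ graphs no such statement holds, transitivity being essential. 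Second, and more seriously, the closing contradiction in the case that $A$ carries an $\omega_1$-increasing sequence does not follow from Kunen--Martin: the Borel linear (quasi-)order on a chain $C_n$ is in general not well-founded, its ``well-founded part'' is not an analytic set, and the $\omega_1$-well-ordered subset you pigeonhole into $C_n$ is not analytic either, so there is no analytic well-founded relation of rank $\meg\omega_1$ to which the theorem applies. What is needed there is the theorem that $\omega_1$ does not order-embed into a Borel linear order (Friedman, Harrington--Marker--Shelah, e.g.\ via the embedding of Borel linear orders into $2^\alpha$, $\alpha<\omega_1$, with the lexicographic order), which is a genuinely different and deeper result; as written, this step is a placeholder rather than a proof. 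For comparison, the paper does not reprove (i) either: it extracts the perfect set of pairwise $\thickapprox$-inequivalent elements from Silver's theorem via Lemma 5 of \cite{Do} and defers the passage from that set to a continuum-sized antichain to the arguments developed for Theorem 3 of \cite{Do}; your outline becomes a proof only after the $G_0$-dichotomy and the Kunen--Martin step are replaced by those (or equivalent) ingredients.
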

The first assertion of the above proposition is a consequence of a result due to J.H. Silver \cite{Si} (see also Lemma 5 from \cite{Do} for simplified version adapted to our needs). Actually, by Silver's Theorem the set $P$ can be chosen to be a perfect subset of $A$. Assertions (ii) and (iii) of the above proposition follow by similar arguments to the ones developed in the proof of Theorem 3 of \cite{Do}.

\begin{thm}\label{thm_gen_D}
  Let $X$ be a Banach space with separable dual and $\xi$ a countable ordinal. Then the following hold true.
  \begin{enumerate}
    \item[(i)] If $\mathbf{SM}_\xi^w(X)$ is uncountable then there exist continuum many pairwise incomparable elements of $SM_\xi^w(X)$.
    \item[(ii)] If $SM_\xi^w(X)$ contains a strictly decreasing sequence of
        length $\omega_1$, then $SM_\xi^w(X)$ contains a strictly increasing sequence of length $\omega_1$.
    \item[(iii)] If $SM_\xi^w(X)$ does not contain a strictly increasing sequence of length $\omega$, then there exists a countable ordinal $\zeta$ such that $SM_\xi^w(X)$ does not contain any decreasing sequence of length $\zeta$.
  \end{enumerate}
\end{thm}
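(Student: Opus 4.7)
The plan is to deduce Theorem \ref{thm_gen_D} directly from Proposition \ref{descriptive}. I would code $SM_\xi^w(X)$ as an analytic subset $A\subseteq[\nn]^\infty$ on which the pre-partial order $\preccurlyeq$ of domination corresponds to an $F_\sigma$ relation, and observe that the dichotomy hypothesis of Proposition \ref{descriptive} -- ``$A$ contains no strictly increasing sequence of length $\omega$ or contains a strictly increasing sequence of length $\omega_1$'' -- holds automatically: this is precisely the content of Corollary \ref{long_chains} applied to $SM_\xi^w(X)$. Once these ingredients are in place, assertions (i), (ii), (iii) follow, respectively, from parts (i), (ii), (iii) of Proposition \ref{descriptive}.

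For the coding, fix a regular thin family $\ff$ of order $\xi$. Since $X^*$ is separable, the weak topology on every norm-bounded subset of $X$ is Polish, so the collection of continuous maps $\widehat{\varphi}:\widehat{\ff}\to (X,w)$ satisfying $\widehat{\varphi}(\emptyset)=0$ and having uniformly bounded range (with uniform bound ranging over $\nn$) forms a standard Borel space. Each such $\widehat{\varphi}$ represents a subordinated weakly null $\ff$-sequence, and the arguments behind the existence of $\ff$-spreading models -- combined with Borel uniformization to select an infinite $L\subseteq\nn$ and with a fixed universal null sequence $(\delta_n)_n$ -- yield a Borel assignment of a generated spreading model, which is naturally coded by the tuple of norms $\bigl(\|\sum_{i=1}^k a_i e_i\|\bigr)_{(a_i)\in Q}$ indexed by the countable set $Q$ of all finite rational sequences. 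Composing with a standard Borel injection of $\rr^Q$ into $[\nn]^\infty$ transports the image to an analytic subset $A\subseteq[\nn]^\infty$ whose quotient by $\sim$ is $\mathbf{SM}_\xi^w(X)$. On this coding, for fixed $C\in\nn$ the $C$-domination condition is a countable conjunction of inequalities in continuous coordinates and is therefore closed, so $\preccurlyeq$ is $F_\sigma$ as the union over $C\in\nn$; we extend this to an $F_\sigma$ pre-partial order on all of $[\nn]^\infty$ by declaring elements outside $A$ related to themselves only.

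With the coding set up, Corollary \ref{long_chains} verifies the dichotomy hypothesis of Proposition \ref{descriptive}, and the three conclusions of that proposition translate back through the coding to give (i), (ii), (iii) of Theorem \ref{thm_gen_D}; in particular, Silver's theorem produces the continuum-sized antichain in part (i), while the rank-theoretic arguments on $\omega_1$-chains in analytic $F_\sigma$ quasi-orders handle parts (ii) and (iii). The principal obstacle is the coding step: one must check carefully that the map sending $\widehat{\varphi}$ to its generated spreading model, which a priori depends on the auxiliary choices of $L$ and $(\delta_n)_n$, can be made Borel, and that the resulting assignment extends the domination relation to a bona fide $F_\sigma$ pre-partial order on the whole of $[\nn]^\infty$. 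All remaining machinery is already packaged in Proposition \ref{descriptive} and Corollary \ref{long_chains}.
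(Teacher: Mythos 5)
Your overall architecture is the same as the paper's: code $SM_\xi^w(X)$ by an analytic subset $A$ of $[\nn]^\infty$, note that Corollary \ref{long_chains} supplies the dichotomy hypothesis of Proposition \ref{descriptive}, and read off (i)--(iii). The gap is in the coding itself. You code a spreading model by its norm data $\bigl(\|\sum_i a_ie_i\|\bigr)_{(a_i)\in Q}\in\rr^Q$ and then ``transport'' this to $[\nn]^\infty$ by a Borel injection, claiming the domination relation remains $F_\sigma$. It does not: $[\nn]^\infty$ is zero-dimensional while $\rr^Q$ is not, so no injection of $\rr^Q$ into $[\nn]^\infty$ can be a topological embedding, and the image of a closed (or $F_\sigma$) relation under a merely Borel injection is only guaranteed to be Borel. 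Proposition \ref{descriptive} is stated, and proved (following Dodos), for an $F_\sigma$ pre-partial order on $[\nn]^\infty$ --- the decomposition of $\preccurlyeq$ into countably many closed pieces is used in parts (ii) and (iii) --- so with your transported relation its hypotheses are simply not verified. A second, smaller, problem is your appeal to ``Borel uniformization'' to select the infinite set $L$ and obtain a Borel assignment $\widehat{\varphi}\mapsto(e_n)_n$: Borel sets need not admit Borel uniformizations (Jankov--von Neumann only gives $\sigma(\Sigma^1_1)$-measurable selections, Luzin--Novikov needs countable sections), and in any case no selection is needed, since analyticity only requires taking a projection.

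The paper's coding avoids both issues by exploiting unconditionality and Pe{\l}czy\'nski's universal basis $(u_n)_n$. Since every element of $SM_\xi^w(X)$ is suppression $1$-unconditional, it is equivalent to $(u_n)_{n\in M}$ for some $M\in[\nn]^\infty$; the pre-partial order on $[\nn]^\infty$ is then the concrete relation ``$(u_n)_{n\in M}$ dominates $(u_n)_{n\in L}$'', which is visibly $F_\sigma$ (for each fixed constant $C$ the $C$-domination condition is a countable intersection of conditions depending continuously on finitely many coordinates of $L$ and $M$, and one takes the union over $C\in\nn$). The set $A$ is defined as the first-coordinate projection of an explicit set $G\subseteq[\nn]^\infty\times[\nn]^\infty\times X^{\ff}\times X^{\widehat{\ff}}$ recording $M$, the set $L$, the $\ff$-sequence, and the map witnessing subordination; separability of $X^*$ is what makes $G$ Borel (weak nullity and subordination are checked against countably many functionals), so $A$ is analytic with no selection argument, and Corollary \ref{generating_subord_sm} together with universality gives that $A$ codes exactly $SM_\xi^w(X)$ up to equivalence. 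If you replace your $\rr^Q$-coding and uniformization step by this device, the rest of your argument goes through as you describe.
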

The above theorem follows by Proposition \ref{descriptive}
and the following analogue of Lemma 7 from \cite{Do} which provides the desirable coding of $SM_\xi^w(X)$. In order to state it we need some additional notation. Let $(u_n)_n$ be the standard unconditional Schauder basis of the universal space of Pe{\l}czy\'nski for the unconditional Schauder basic sequences (see \cite{Pe}). We define the following pre-partial ordering $\preccurlyeq$ on $[\nn]^\infty$. For every $L$ and $M$ in $[\nn]^\infty$ we set $L\preccurlyeq M$ iff $(u_n)_{n\in M}$ dominates $(u_n)_{n\in L}$.
\begin{lem}\label{lem_cod_1}
  Let $X$ be a Banach space with separable dual and $\xi$ be a countable ordinal. Then there exists an analytic subset $A$ of $[\nn]^\infty$
  satisfying the following.
  \begin{enumerate}
    \item[(i)] For every $(e_n)_n$ in $SM_\xi^w(X)$ there exists $M$ in $A$ such that the sequences $(e_n)_n$ and $(u_n)_{n\in M}$ are equivalent.
    \item[(ii)] For every $M$ in $A$  there exists $(e_n)_n$ in $SM_\xi^w(X)$ such that the sequences $(e_n)_n$ and $(u_n)_{n\in M}$ are equivalent.
  \end{enumerate}
\end{lem}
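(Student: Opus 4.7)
The plan is to fix once and for all a regular thin family $\ff$ of order $\xi$ and to parameterize subordinated weakly null $\ff$-sequences in $B_X$ by a single Polish space. Since $X^*$ is separable, $(B_X,w)$ is compact metrizable, and $\widehat{\ff}$ is a countable compact metric subspace of $\{0,1\}^\nn$; hence
\[
\mathcal{S}=\{\widehat{\varphi}\in C(\widehat{\ff},(B_X,w)):\widehat{\varphi}(\emptyset)=0\}
\]
endowed with the supremum metric is a Polish space, and every $\widehat{\varphi}\in\mathcal{S}$ yields a subordinated weakly null $\ff$-sequence $(\widehat{\varphi}(s))_{s\in\ff}\subset B_X$. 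Conversely, since $\widehat{\ff}\upharpoonright L$ is closed in $\widehat{\ff}$ and $X$ with the weak topology is locally convex, Dugundji's extension theorem ensures that any continuous $\widehat{\varphi}_0:\widehat{\ff}\upharpoonright L\to B_X$ with $\widehat{\varphi}_0(\emptyset)=0$ extends to some element of $\mathcal{S}$; so every subordinated weakly null $\ff$-subsequence of norm at most one arises as $(\widehat{\varphi}(s))_{s\in\ff\upharpoonright L}$ for some $(\widehat{\varphi},L)\in\mathcal{S}\times[\nn]^\infty$.

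For $(\widehat{\varphi},L,M,C)\in\mathcal{S}\times[\nn]^\infty\times[\nn]^\infty\times\mathbb{Q}^+$ let $\mathrm{Gen}(\widehat{\varphi},L,M,C)$ be the predicate asserting that for every $k\in\nn$, every $(a_j)_{j=1}^k\in[-1,1]^k\cap\mathbb{Q}^k$ and every $\ee\in\mathbb{Q}^+$ there exists $l_0\in\nn$ such that
\[
C^{-1}\Big\|\sum_{j=1}^k a_j u_{M(j)}\Big\|-\ee\ \mik\ \Big\|\sum_{j=1}^k a_j\widehat{\varphi}(s_j)\Big\|\ \mik\ C\Big\|\sum_{j=1}^k a_j u_{M(j)}\Big\|+\ee
\]
for every $l\meg l_0$ and every plegma family $(s_j)_{j=1}^k$ in $\ff\upharpoonright L$ with $s_1(1)\meg L(l)$. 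This is a countable Boolean combination of closed conditions, each depending on only finitely many coordinates of $\widehat{\varphi}$, $L$ and $M$, and is therefore Borel. Set $B=\bigcup_{C\in\mathbb{Q}^+}\{(\widehat{\varphi},L,M):\mathrm{Gen}(\widehat{\varphi},L,M,C)\}$ and let $A$ be the projection of $B$ onto the third coordinate; then $A$ is analytic.

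For assertion (ii), given $M\in A$ with witness $(\widehat{\varphi},L,C)$, a standard diagonal application of Theorem \ref{ramseyforplegma} produces $L'\in[L]^\infty$ on which all plegma limits of norms of combinations of $\widehat{\varphi}(s_j)$ exist, yielding an $\ff$-spreading model $(e_n)_n\in SM_\xi^w(X)$ (subordination and weak nullity are inherited by restriction), and the uniform bounds in $\mathrm{Gen}$ force $(e_n)_n\sim(u_n)_{n\in M}$. For assertion (i), given $(e_n)_n\in SM_\xi^w(X)$, Corollary \ref{generating_subord_sm} provides a subordinated weakly null $\ff$-subsequence generating it; rescaling to norm at most one and invoking Dugundji's theorem lifts it to some $(\widehat{\varphi},L)$ with $\widehat{\varphi}\in\mathcal{S}$. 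By Theorem 6.11 of \cite{AKT2}, $(e_n)_n$ is either identically zero or $1$-suppression unconditional; in the non-trivial case Pe{\l}czy\'nski's universality of $(u_n)_n$ yields $M\in[\nn]^\infty$ with $(e_n)_n\sim(u_n)_{n\in M}$, whence $\mathrm{Gen}(\widehat{\varphi},L,M,C)$ holds for an appropriate $C\in\mathbb{Q}^+$ and $M\in A$. The single trivial $\sim$-class, which is not equivalent to any $(u_n)_{n\in M}$, can be absorbed by adjoining a distinguished element to $A$ without affecting analyticity.

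The main technical point is the verification that $\mathrm{Gen}$ is Borel: one must check that the combined statement ``$(\widehat{\varphi}(s))_{s\in\ff\upharpoonright L}$ generates an $\ff$-spreading model (after possible refinement by Ramsey) equivalent to $(u_n)_{n\in M}$'' admits a uniform countable description by finitely supported closed conditions on the parameters, with the convergence issue handled on the $M$-side by the Ramsey pass from $L$ to $L'$. Once that is done, the rest is bookkeeping: Dugundji's extension theorem makes the parameterization $\mathcal{S}$ complete, Corollary \ref{generating_subord_sm} transfers any spreading-model generator onto the fixed $\ff$, Theorem 6.11 of \cite{AKT2} delivers the unconditionality required to invoke Pe{\l}czy\'nski's universal basis, and the projection argument produces analyticity of $A$.
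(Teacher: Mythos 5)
Your overall scheme is the same as the paper's: fix a regular thin family $\ff$ of order $\xi$, form a Borel set of witnesses consisting of (a parameterization of) a subordinated weakly null $\ff$-subsequence together with $L$, $M$ and a constant expressing the two-sided plegma estimates against $(u_n)_{n\in M}$, project to get the analytic set $A$, prove (i) via Corollary \ref{generating_subord_sm} plus Pe{\l}czy\'nski universality, and prove (ii) by a Ramsey pass to an $L'\subseteq L$ on which a spreading model is actually generated. However, the step on which the analyticity of $A$ rests is flawed as written. You justify that $\mathcal{S}=\{\widehat{\varphi}\in C(\widehat{\ff},(B_X,w)):\widehat{\varphi}(\emptyset)=0\}$ is Polish by asserting that $(B_X,w)$ is compact metrizable because $X^*$ is separable; separability of the dual gives metrizability of the weak topology on bounded sets, but compactness would require reflexivity, and the lemma is stated only for separable dual (e.g.\ $X=c_0$). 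Worse, the repair is not merely cosmetic: $(B_{c_0},w)$ is homeomorphic to $\{x\in[-1,1]^{\nn}:x_n\to 0\}$ with the product topology, which is not a $G_\delta$ in $[-1,1]^{\nn}$ and hence not completely metrizable, so the target of your function space is not Polish and the sup-metric space $\mathcal{S}$ has no evident standard Borel structure. Since the analyticity of $A$ is obtained by projecting a Borel subset of $\mathcal{S}\times[\nn]^\infty\times[\nn]^\infty$, this gap undermines the key descriptive-set-theoretic conclusion. (A secondary quibble: your ``Gen'' conditions are not closed in your topology, because the norm is only weakly lower semicontinuous, so the lower estimate defines a $G_\delta$ rather than a closed set; this does not destroy Borelness, but the claim as stated is inaccurate.)

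The fix is exactly the device the paper uses, and it makes the Dugundji extension unnecessary: since $\widehat{\ff}$ is a countable set and $X$ is separable (norm topology), code a witness as a point of the Polish space $[\nn]^\infty\times[\nn]^\infty\times X^{\ff}\times X^{\widehat{\ff}}$ with the product of norm topologies, assigning arbitrary values $y_t$ for $t\notin\widehat{\ff}\upharpoonright L$, and express ``$(x_s)_{s\in\ff\upharpoonright L}$ is subordinated with witness $(y_t)_{t\in\widehat{\ff}\upharpoonright L}$ and weakly null'' as a Borel condition by quantifying over a countable norm-dense sequence in $X^*$ --- this is precisely where the separable-dual hypothesis enters. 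Alternatively, one could salvage your function-space picture by embedding $C(\widehat{\ff},(B_X,w))$ into the compact metrizable cube $(B_{X^{**}},w^*)^{\widehat{\ff}}$ and checking that ``takes values in $X$'' is Borel there (e.g.\ via Lusin--Souslin, since $(B_X,\|\cdot\|)$ maps continuously and injectively into $(B_{X^{**}},w^*)$), but that requires an argument you have not supplied. Finally, your remark about adjoining a distinguished element of $A$ for the trivial (seminorm-zero) class does not literally make (i) true for that class, since the zero sequence is equivalent to no $(u_n)_{n\in M}$; this is a defect of the statement itself that the paper's proof also passes over, so it is not held against you, but the adjoined element buys nothing.
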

\begin{proof}
  Fix a regular thin family $\ff$ of order $\xi$.  We consider the following subset $G$ of $[\nn]^\infty\times [\nn]^\infty\times X^\ff\times X^{\widehat{\ff}}$. We set $(M, L,(x_s)_{s\in\ff},(y_t)_{t\in\widehat{\ff}})\in G$
  if the following are satisfied:
  \begin{enumerate}
    \item[(a)] There exists $C>0$ such that for every $k\in\nn$, every $(s_j)_{j=1}^k$ in $\mathrm{Plm}_k(\ff\upharpoonright L)$ with $s_1(1)\meg L(k)$ and every $a_1,...,a_k$ reals we have
        \begin{equation}
          \label{eq037}
          C^{-1}\Big\|\sum_{j=1}^ka_jx_{s_j}\Big\|
          \mik\Big\|\sum_{j=1}^ka_ju_{M(j)}\Big\|
          \mik C\Big\|\sum_{j=1}^ka_jx_{s_j}\Big\|
        \end{equation}
        where $(u_n)_n$ is the standard unconditional Schauder basis of the universal space of Pe{\l}czy\'nski for the unconditional Schauder basic sequences.
    \item[(b)] The $\ff$-subsequence
        $(x_s)_{s\in\ff\upharpoonright L}$ is subordinated and weakly null. Moreover, if $\widehat{\varphi}:\widehat{\ff}\upharpoonright L\to X$ is the map witnessing the fact that
        $(x_s)_{s\in\ff\upharpoonright L}$ is subordinated , then $\widehat{\varphi}(t)=y_t$ for all $t\in\widehat{\ff}\upharpoonright L$.
  \end{enumerate}
  Invoking the separability of $X^*$, it is easy to check that $G$ is a Borel subset of $[\nn]^\infty\times [\nn]^\infty\times X^\ff\times X^{\widehat{\ff}}$. We set $A$ to be the projection of $G$ to the first coordinate, that is
  \begin{equation}
          \label{eq038}
          \begin{split}
            A=\{M\in[\nn]^\infty:\;&\text{there exists}\;(L,(x_s)_{s\in\ff},(y_t)_{t\in\widehat{\ff}})\in[\nn]^\infty\times X^{\ff}\times X^{\widehat{\ff}}\;\\
          &\text{such that}(M,L,(x_s)_{s\in\ff},(y_t)_{t\in\widehat{\ff}})\in G\;\}.
          \end{split}
        \end{equation}
  Since $G$ is Borel, we get that $A$ is analytic. It remains to check that $A$ satisfies (i) and (ii) of the lemma. Indeed, let $(e_n)_n$ in $SM_\xi^w(X)$. By Corollary \ref{generating_subord_sm}, there exist
  an infinite subset $L$ of $\nn$ and an $\ff$-sequence $(x_s)_{s\in\ff}$ in $X$ such that the $\ff$-subsequence $(x_s)_{s\in\ff\upharpoonright L}$ is subordinated, weakly null and generates $(e_n)_n$ as an $\ff$-spreading model. Moreover, by the universality of Pe{\l}czy\'nski's space, there exists an infinite subset $M$ of $\nn$ such that the sequences $(u_n)_{n\in M}$ and $(e_n)_n$ are equivalent. Finally, if $\widehat{\varphi}:\widehat{\ff}\to X$ is the continuous map witnessing the fact that $(x_s)_{s\in\ff\upharpoonright L}$ is subordinated, then we set $y_t=\widehat{\varphi}(t)$ for all $t\in \widehat{\ff}\upharpoonright L$ and $y_t=0$ otherwise. It follows readily that $(M,L, (x_s)_{s\in\ff},(y_t)_{t\in\widehat{\ff}})$ belongs to $G$ and therefore $M$ belongs to $A$. Since the sequences $(u_n)_{n\in M}$ and $(e_n)_n$ are equivalent, conclusion (i) of the lemma is satisfied.
  Conversely, let $M\in A$. By the definition of $A$, there exist an infinite subset $L$ of $\nn$, an $\ff$-sequence $(x_s)_{s\in\ff})$ in $X$ and a family $(y_t)_{t\in\widehat{\ff}}$ of elements in $X$ such that $(M, L,(x_s)_{s\in\ff},(y_t)_{t\in\widehat{\ff}})$ belongs to $G$. We pass to an infinite subset $L'$ of $L$ such that the $\ff$-subsequence $(x_s)_{s\in\ff\upharpoonright L'}$ generates an $\ff$-spreading model $(e_n)_n$. By \eqref{eq037}, it is easy to see that the sequences $(e_n)_n$ and $(u_n)_{n\in M}$ are equivalent, while by (b) above we have that $(e_n)_n$ belongs to $SM_\xi^w(X)$. That is, the conclusion (ii) of the lemma holds true and the proof is complete.
\end{proof}

A question of interest is whether one can drop the separable dual assumption in Theorem \ref{thm_gen_D}. Towards that direction we have the following.

\begin{thm}\label{thm_gen_D_2}
  Let $X$ be a separable Banach space admitting no spreading model of order 1 equivalent to the standard basis of $\ell^1$ and $\xi$ a countable ordinal. Then the following hold true.
  \begin{enumerate}
    \item[(i)] If $\mathbf{SM}_\xi^w(X)$ is uncountable then there exist continuum many pairwise incomparable elements of $SM_\xi^w(X)$.
    \item[(ii)] If $SM_\xi^w(X)$ contains a strictly decreasing sequence of
        length $\omega_1$, then $SM_\xi^w(X)$ contains a strictly increasing sequence of length $\omega_1$.
    \item[(iii)] If $SM_\xi^w(X)$ does not contain any strictly increasing sequence of length $\omega$, then there exists a countable ordinal $\zeta$ such that $SM_\xi^w(X)$ does not contain any decreasing sequence of length $\zeta$.
  \end{enumerate}
\end{thm}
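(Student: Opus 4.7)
The strategy is to mirror the proof of Theorem \ref{thm_gen_D}. By Corollary \ref{long_chains} the set $SM_\xi^w(X)$ automatically satisfies the dichotomy required by Proposition \ref{descriptive}, and the Pe{\l}czy\'nski dominating pre-order $\preccurlyeq$ on $[\nn]^\infty$ is manifestly $F_\sigma$. Thus everything reduces to proving the analogue of Lemma \ref{lem_cod_1} under the weaker hypothesis that $X$ is separable and admits no order $1$ spreading model equivalent to the standard basis of $\ell^1$.

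In Lemma \ref{lem_cod_1} the separability of $X^*$ was used in exactly one place: to render condition (b), which asserts that $(x_s)_{s\in\ff\upharpoonright L}$ is subordinated and weakly null, into a Borel condition on $X^\ff\times X^{\widehat{\ff}}$, by testing weak convergence against a countable norm-dense subset of $X^*$. My plan is to replace this direct test by an existential coding based on convex block witnesses. Namely, I would enlarge the data defining $G$ by sequences $(z_{t,n})_n$ in $X$ and $(N_{t,n})_n$ in $\nn$, indexed by non-maximal $t\in\widehat{\ff}\upharpoonright L$, subject to the Borel requirements that $z_{t,n}$ is a rational convex combination of $\{y_{t\cup\{m\}}-y_t:m\meg N_{t,n}\}$ with $\|z_{t,n}\|<1/n$ and $N_{t,n}\to\infty$, together with $y_\emptyset = 0$. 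These are all Borel conditions on a product of Polish spaces, so projecting onto the first $[\nn]^\infty$-coordinate produces an analytic set $A$.

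The no $\ell^1$ hypothesis then enters in verifying that this projection still parametrises $SM_\xi^w(X)$ in both directions. The easy direction follows from Mazur's theorem: a subordinated weakly null $\ff$-subsequence automatically supplies the required convex block witnesses at every non-maximal node. For the converse, given a tuple in the Borel set I would invoke Rosenthal's $\ell^1$ theorem and diagonalise over $\widehat{\ff}$ to pass to an infinite $L'\subseteq L$ along which each difference sequence $(y_{t\cup\{m\}}-y_t)_m$, for $t\in \widehat{\ff}\upharpoonright L'$ non-maximal, becomes weakly Cauchy. For weakly Cauchy sequences the existence of norm-small convex combinations of tails forces the weak limit to be $0$, so the restricted $\ff$-subsequence is genuinely subordinated and weakly null, and hence generates an element of $SM_\xi^w(X)$ equivalent to $(u_n)_{n\in M}$.

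The main obstacle is orchestrating this diagonalisation coherently: one must simultaneously stabilise the weakly Cauchy behaviour of countably many difference sequences while preserving both the Pe{\l}czy\'nski equivalence inequalities of \eqref{eq037} and the validity of the chosen convex block witnesses on the restricted family. A Ramsey-style diagonal argument along $\widehat{\ff}$, combined with the no $\ell^1$ assumption in the form of Rosenthal's theorem, should suffice; indeed, without the no $\ell^1$ hypothesis this step would fail, since the convex block witnesses alone do not force weak nullity when $X$ contains $\ell^1$. Once this analogue of Lemma \ref{lem_cod_1} is established, the three statements of Theorem \ref{thm_gen_D_2} follow by applying Proposition \ref{descriptive} to $A$ and transporting the conclusions back through the coding, exactly as in the proof of Theorem \ref{thm_gen_D}.
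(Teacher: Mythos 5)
Your high-level plan coincides with the paper's: reduce Theorem \ref{thm_gen_D_2} to an analytic coding lemma replacing Lemma \ref{lem_cod_1} (this is the paper's Lemma \ref{lem_cod_2}), note that the Pe{\l}czy\'nski domination order is $F_\sigma$, and apply Proposition \ref{descriptive}. You also correctly locate the difficulty in making ``subordinated and weakly null'' expressible without a countable norming set, and correctly guess that the no-$\ell^1$ hypothesis is spent via Rosenthal-type arguments. However, the coding you propose does not work as argued, and the gap is precisely in your verification that every $M$ in the projection arises from an element of $SM_\xi^w(X)$. First, your convex-block witnesses are existential and tied to the original set $L$: after you invoke Rosenthal's theorem and pass to $L'\subseteq L$ to make the difference sequences weakly Cauchy, the witnesses (whose indices need not lie in $L'$) no longer constrain the restricted sequence, so its weak limit need not be $y_t$. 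For example, if $(y_{t\cup\{m\}}-y_t)_{m\in L}$ equals a fixed nonzero vector $x_0$ on one infinite subset of $L$ and a weakly null sequence with norm-small tail averages on the complementary infinite subset, then norm-small rational convex combinations of every tail exist, yet there is a weakly Cauchy subsequence with weak limit $x_0\neq 0$; so ``weakly Cauchy plus norm-small convex combinations of tails forces weak limit $0$'' is false for the restriction. Second, and independently, your witnesses only see immediate successors $t\cup\{m\}$, and even if after restriction every sequence $(y_{t\cup\{m\}})_{m\in L'}$ converged weakly to $y_t$, this level-by-level convergence does not give weak continuity of $t\mapsto y_t$ on $\widehat{\ff}\upharpoonright L'$: continuity must hold along sequences $s_n\to t$ of arbitrary depth, and iterated limits do not control diagonal ones. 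Concretely, for a family of order $2$ take $y_\emptyset=y_{\{m\}}=0$, $y_{\{m,k\}}=x_0+e_k$ if $k\mik 2^m$ and $y_{\{m,k\}}=e_k$ if $k>2^m$, with $(e_k)$ orthonormal and $x_0\neq 0$: all of your Borel requirements are satisfiable, yet $y_{\{n,n+1\}}\to x_0\neq 0$ weakly, so no restriction of this tree with these prescribed values is subordinated. Hence subordination and weak nullity of the restricted $\ff$-subsequence do not follow from your conditions, and property (ii) of the coding lemma is not established by your argument.

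The paper's Lemma \ref{lem_cod_2} circumvents both problems by using a \emph{universal}, purely norm-stated Ces\`aro condition at every non-maximal node $t$, ranging over all block sequences in $\ff_{[t]}\upharpoonright L$, i.e.\ over completions of $t$ to \emph{maximal} nodes (condition (iii) there). This condition is Borel, is inherited by every infinite subset of $L$, and, via Lemma \ref{pushing_cont} (which reduces weak continuity on $\widehat{\ff}\upharpoonright L$ to convergence along sequences of maximal nodes), it yields subordination and weak nullity outright, with no hypothesis on $X$ (Claim 1 in the paper). The no-$\ell^1$ assumption is then used in the opposite direction (Claim 2): by Rosenthal's dichotomy between Ces\`aro summability and $\ell^1$ spreading models, combined with a Ramsey stabilization over block families (Proposition \ref{ramseyforblock}) and a diagonalization over the countably many nodes, one extracts an $L$ on which the Ces\`aro condition holds, so every element of $SM_\xi^w(X)$ admits a witness. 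Note that the roles are swapped relative to your plan: with your weak existential condition the forward direction is trivial (Mazur) but the backward direction breaks; with the paper's strong universal condition the backward direction is soft and the hypothesis is spent on the forward one. Any repair of your approach would essentially have to replace your witnesses by a condition that survives passage to infinite subsets and controls the values at maximal nodes, which is exactly what the paper's Ces\`aro condition accomplishes.
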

Theorem \ref{thm_gen_D_2} follows by Proposition \ref{descriptive} and the following analogue of Lemma \ref{lem_cod_1} which provides us with the desirable analytic coding of the set $SM_\xi^w(X)$.

\begin{lem}\label{lem_cod_2}
  Let $X$ be a separable Banach space admitting no spreading model of order 1 equivalent to the standard basis of $\ell^1$ and $\xi$ a countable ordinal.
  Then there exists an analytic subset $A$ of $[\nn]^\infty$
  satisfying the following.
  \begin{enumerate}
    \item[(i)] For every $(e_n)_n$ in $SM_\xi^w(X)$ there exists $M$ in $A$ such that the sequences $(e_n)_n$ and $(u_n)_{n\in M}$ are equivalent.
    \item[(ii)] For every $M$ in $A$  there exists $(e_n)_n$ in $SM_\xi^w(X)$ such that the sequences $(e_n)_n$ and $(u_n)_{n\in M}$ are equivalent.
  \end{enumerate}
\end{lem}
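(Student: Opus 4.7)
The plan is to adapt the proof of Lemma \ref{lem_cod_1}, whose only use of the separability of $X^*$ was to render the ``subordinated and weakly null'' clause (b) in the definition of $G$ Borel. We will replace (b) by a clause that is Borel using only the norm-separability of $X$, and then use the no-$\ell^1$-spreading-model-of-order-$1$ hypothesis to justify that the resulting projection still codes $SM_\xi^w(X)$. Concretely, fix a regular thin family $\ff$ of order $\xi$ and define $G\subseteq[\nn]^\infty\times[\nn]^\infty\times X^\ff\times X^{\widehat{\ff}}$ to consist of the quadruples $(M,L,(x_s)_{s\in\ff},(y_t)_{t\in\widehat{\ff}})$ satisfying clause (a) exactly as in Lemma \ref{lem_cod_1} together with:
\begin{enumerate}
\item[(b$'$)] $\sup_{s\in\ff}\|x_s\|<\infty$; $y_\emptyset=0$; $y_s=x_s$ for every $s\in\ff\upharpoonright L$; and for every $t\in\widehat{\ff}\upharpoonright L$, every rational $\ee>0$ and every basic open neighborhood $U$ of $t$ in $\widehat{\ff}\upharpoonright L$, the norm distance from $y_t$ to $\mathrm{conv}\{y_u:u\in U\cap(\ff\upharpoonright L)\}$ is strictly less than $\ee$.
\end{enumerate}
Since $\widehat{\ff}\upharpoonright L$ has a countable basis of clopen sets, $\ff\upharpoonright L$ is countable, and $X$ is norm-separable, (b$'$) is Borel. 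Hence $G$ is Borel and its projection $A$ to $[\nn]^\infty$ is analytic.

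For assertion (i), fix $(e_n)_n\in SM_\xi^w(X)$ and apply Corollary \ref{generating_subord_sm} to obtain $L\in[\nn]^\infty$ and an $\ff$-sequence $(x_s)_{s\in\ff}$ such that $(x_s)_{s\in\ff\upharpoonright L}$ is subordinated, weakly null and generates $(e_n)_n$. Let $\widehat{\varphi}:\widehat{\ff}\upharpoonright L\to X$ be the weakly continuous witness; set $y_t=\widehat{\varphi}(t)$ for $t\in\widehat{\ff}\upharpoonright L$ and $y_t=0$ otherwise. Weak continuity of $\widehat{\varphi}$, combined with Mazur's theorem, forces $y_t$ to lie in the norm-closed convex hull of $\{y_u:u\in U\cap(\ff\upharpoonright L)\}$ for every neighborhood $U$ of $t$; thus (b$'$) holds. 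Pe{\l}czy\'nski's universality then produces $M\in A$ with $(u_n)_{n\in M}\sim(e_n)_n$, establishing (i).

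Assertion (ii) is the technical core. Given $(M,L,(x_s),(y_t))\in G$, the usual Ramsey extraction (as in Lemma \ref{lem_cod_1}) yields $L'\in[L]^\infty$ along which $(x_s)_{s\in\ff\upharpoonright L'}$ generates an $\ff$-spreading model $(e_n)_n$, and clause (a) gives $(e_n)_n\sim(u_n)_{n\in M}$. The main obstacle is to certify $(e_n)_n\in SM_\xi^w(X)$, that is, to exhibit a further $\ff$-subsequence that is subordinated and weakly null. This is where the hypothesis enters: the absence of an order-$1$ $\ell^1$-spreading model implies that $\ell^1$ does not embed in $X$, and so by Rosenthal's $\ell^1$-theorem every bounded sequence in $X$ has a weakly Cauchy subsequence. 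For any sequence $t_n\to t$ in $\widehat{\ff}\upharpoonright L'$, Rosenthal's theorem yields a weakly convergent subsequence of $(x_{t_n})$; clause (b$'$) and Mazur's theorem then force the weak limit to coincide with $y_t$. Enumerating the countable set $\widehat{\ff}\upharpoonright L'$ and applying Galvin--Prikry (Theorem \ref{Galvin prikry}) repeatedly with a standard diagonal argument extracts $L''\in[L']^\infty$ along which $(x_s)_{s\in\ff\upharpoonright L''}$ admits a weakly continuous extension agreeing with $y_\cdot$, hence is subordinated; and since $y_\emptyset=0$ this $\ff$-subsequence is weakly null. Clause (a) is preserved under thinning of $L$, so $(e_n)_n\sim(u_n)_{n\in M}$ persists on $L''$, completing the argument. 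The principal technical difficulty lies in arranging the diagonalization so that genuine (not merely sequential-at-countably-many-points) weak continuity is achieved simultaneously across all of $\widehat{\ff}\upharpoonright L''$.
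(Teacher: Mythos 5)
Your clause (b$'$) and the Borelness argument are fine, and assertion (i) does follow from Mazur's theorem as you say. The gap is in assertion (ii), and it is twofold. First, Rosenthal's $\ell^1$-theorem only yields \emph{weakly Cauchy} subsequences; since the hypothesis gives no weak sequential completeness (think of $X=c_0$, which admits no $\ell^1$ spreading model), a bounded sequence $(x_{s_n})$ with $s_n\to t$ need not have any weakly convergent subsequence in $X$, so the sentence ``Rosenthal's theorem yields a weakly convergent subsequence'' is simply false as stated. Second, and more fundamentally, (b$'$) is an \emph{existential} convex-hull condition and cannot pin down weak limits: if the values $y_u$ for $u$ near $t$ split into two camps clustering weakly at $+v$ and $-v$ while $y_t=0$, then $y_t$ lies in the norm-closed convex hull of every tail (mix the camps), so (b$'$) holds, yet no subsequence converges weakly to $y_t$; worse, on any further $L''$ one typically sees a single camp, and then every weakly continuous extension of $s\mapsto x_s$ takes a nonzero value at $\emptyset$, so no subordinated \emph{weakly null} witness exists. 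Thus ``(b$'$) and Mazur force the weak limit to coincide with $y_t$'' is a non sequitur, and your Galvin--Prikry diagonalization has nothing to diagonalize towards. Whether clause (a) happens to exclude all such configurations (in the example above it does, but for reasons you never invoke) is exactly the point that would need proof, and it is not addressed.

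The paper avoids this by replacing your existential condition with a \emph{uniform} Ces\`aro condition (its clause (iii)): all long block averages $\frac1n\sum_j y_{t\cup t'_j}$ must be $\ee$-close to $y_t$. Sufficiency of that clause is then immediate without any hypothesis on $X$ and without further extraction: if $x^*(y_{s_n}-y_t)\meg 2\ee$ along $s_n\to t$, averaging $n$ of them contradicts (iii), and sequential continuity suffices by metrizability (Lemma \ref{pushing_cont}); so subordination and weak nullity hold already on $L$. The no-$\ell^1$-spreading-model hypothesis is needed in the \emph{other} direction (the analogue of your (i)): to show every element of $SM_\xi^w(X)$ admits a representative satisfying the uniform clause, one uses Rosenthal's dichotomy between Ces\`aro summability and $\ell^1$ spreading models for the weakly null sequences $(y_{t\cup t_n}-y_t)_n$, together with a Ramsey theorem for block sequences in $\ff_{[t]}$ (Proposition \ref{ramseyforblock}) to stabilize the averages. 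Your proposal uses the hypothesis only to misquote Rosenthal's $\ell^1$-theorem, and so misses this mechanism entirely.
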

Let us recall that $(u_n)_n$ is the standard unconditional Schauder basis of the universal space of Pe{\l}czy\'nski for the unconditional Schauder basic sequences (see \cite{Pe}).
For the proof of Lemma \ref{lem_cod_2} we will need the following lemma.
\begin{lem}
  \label{pushing_cont}
  Let $\ff$ be a regular thin family and $L$ an infinite subset of $\nn$. Also let $X$ be a Banach
  space and $\widehat{\varphi}:\widehat{\ff}\upharpoonright L\to X$ a map such that for every $t$ in $(\widehat{\ff}\upharpoonright L)\setminus \ff$ and every sequence $(s_n)_n$ in $\ff\upharpoonright L$ convergent to $t$, we have that $\widehat{\varphi}(s_n)\stackrel{w}{\to}\widehat{\varphi}(t)$. Then $\widehat{\varphi}$ is subordinated.
\end{lem}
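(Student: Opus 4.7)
The plan is to check continuity of $\widehat{\varphi}:\widehat{\ff}\upharpoonright L\to (X,w)$ sequentially and pointwise on $X^*$. Since $\widehat{\ff}\upharpoonright L$ is a closed subset of $\{0,1\}^{\nn}$, it is compact metrizable, so $\widehat{\varphi}$ is continuous if and only if it is sequentially continuous. Moreover, each $s\in\ff\upharpoonright L$ is isolated in $\widehat{\ff}\upharpoonright L$: the basic neighbourhood $\{t\in\widehat{\ff}\upharpoonright L: t\cap[1,\max s]=s\}$ contains only $s$, since any such $t$ satisfies $s\subseteq t$ and $s$ is maximal in $\widehat{\ff}$. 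Thus continuity at the points of $\ff\upharpoonright L$ is automatic, and it remains to handle $t\in(\widehat{\ff}\upharpoonright L)\setminus\ff$. Since the weak topology on $X$ is the initial topology generated by $X^*$, it suffices to show that, for every $x^*\in X^*$, the real-valued map $f_{x^*}:=x^*\circ\widehat{\varphi}$ is sequentially continuous at each such $t$.

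Fix $x^*\in X^*$, $t\in(\widehat{\ff}\upharpoonright L)\setminus\ff$ and a sequence $(t_n)_n$ in $\widehat{\ff}\upharpoonright L$ with $t_n\to t$. I construct an auxiliary sequence $(s_n)_n$ in $\ff\upharpoonright L$ with $s_n\to t$ and $|f_{x^*}(s_n)-f_{x^*}(t_n)|<1/n$. If $t_n\in\ff$, set $s_n=t_n$. Otherwise, using the density of $\ff\upharpoonright L$ in $\widehat{\ff}\upharpoonright L$ (it is the closure), pick a sequence $(r_k)_k$ in $\ff\upharpoonright L$ with $r_k\to t_n$; since $t_n$ is finite, after passing to a tail we may assume $r_k\supseteq t_n$ and $\min(r_k\setminus t_n)\to\infty$. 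The hypothesis applied at $t_n$ gives $f_{x^*}(r_k)\to f_{x^*}(t_n)$, so for $k$ sufficiently large a single $r_k=:s_n$ satisfies simultaneously $s_n\supseteq t_n$, $\min(s_n\setminus t_n)>n$, and $|f_{x^*}(s_n)-f_{x^*}(t_n)|<1/n$.

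A routine check in the product topology then shows $s_n\to t$: for any $M\in\nn$ and every $n\meg M$ one has $s_n\cap[1,M]=t_n\cap[1,M]$ (the extras of $s_n$ over $t_n$, if present, lie beyond $n\meg M$), and for $n$ large enough $t_n\cap[1,M]=t\cap[1,M]$. Since $(s_n)_n\subseteq \ff\upharpoonright L$ and $t\notin\ff$, the hypothesis yields $f_{x^*}(s_n)\to f_{x^*}(t)$; combining with $|f_{x^*}(s_n)-f_{x^*}(t_n)|<1/n$ gives $f_{x^*}(t_n)\to f_{x^*}(t)$, as required.

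The principal obstacle is that the weak topology on $X$ is not first-countable, so a single diagonal extraction of $s_n$ cannot be expected to control all functionals simultaneously. This is bypassed by the reduction above: sequential weak continuity of $\widehat{\varphi}$ is equivalent to the scalar condition that $x^*\circ\widehat{\varphi}$ be sequentially continuous for every $x^*\in X^*$, which licenses choosing $s_n$ depending on the particular functional under consideration.
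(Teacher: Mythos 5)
Your proof is correct and follows essentially the same route as the paper's: reduce to sequential continuity via metrizability of $\widehat{\ff}\upharpoonright L$, fix a functional $x^*$, replace each $t_n$ by an end-extension $s_n\in\ff\upharpoonright L$ whose $x^*$-value is within $1/n$, check $s_n\to t$, and apply the hypothesis at $t$. Your explicit treatment of the isolated points of $\ff\upharpoonright L$ and the requirement $\min(s_n\setminus t_n)>n$ are minor refinements of the same argument.
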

\begin{proof}
  Since the topology on $\widehat{\ff}\upharpoonright L$ is metrizable, it suffices to check that $\widehat{\varphi}$ is sequentially continuous. Let $(t_n)_n$ be a convergent sequence in $\widehat{\ff}\upharpoonright L$ to some $t$. Clearly $t$ belongs to $\widehat{\ff}\upharpoonright L$. Moreover, without loss of generality, we may assume that $\min (t_n\setminus t)\to\infty$.
  We need to show that $\widehat{\varphi}(t_n)\stackrel{w}{\to}\widehat{\varphi}(t)$. Fix $x^*\in X^*$. For every $n\in\nn$, we pick $s_n\in \ff\upharpoonright L$ such that $t_n\sqsubseteq s_n$
  and $|x^*(\widehat{\varphi}(t_n))-x^*(\widehat{\varphi}(s_n))|<1/n$. Since $\min (t_n\setminus t)\to\infty$, we get that $(s_n)_n$ converges to $t$. By the assumptions on the map $\widehat{\varphi}$, we have that $x^*(\widehat{\varphi}(s_n))\to x^*(\widehat{\varphi}(t))$. Since $|x^*(\widehat{\varphi}(t_n))-x^*(\widehat{\varphi}(s_n))|<1/n$ for all $n\in\nn$, we get that
  $x^*(\widehat{\varphi}(t_n))\to x^*(\widehat{\varphi}(t))$ and the proof is complete.
\end{proof}
Before we proceed to the proof of Lemma \ref{lem_cod_2}, we need to introduce some additional notation.
Let $\ff$ be a regular thin family, $L$ be an infinite subset of $\nn$ and $k$ be a positive integer. We set
\begin{equation}
  \label{eq029(new)}
  \mathrm{Bl}_k(\ff\upharpoonright L)=\{(s_i)_{i=1}^k: (s_i)_{i=1}^k\text{ is a block sequence in }\ff\upharpoonright L\}.
\end{equation}
Let us observe that the families $\mathrm{Bl}_k(\ff\upharpoonright L)$ share the Ramsey property.
In particular, we have the following.
\begin{prop}
  \label{ramseyforblock}
  Let $\ff$ be a regular thin family, $L$ an infinite subset of $\nn$ and $k$ a positive integer.
  Then for every finite coloring of $\mathrm{Bl}_k(\ff\upharpoonright L)$ there exists an infinite subset $L'$ of $L$ such that the set $\mathrm{Bl}_k(\ff\upharpoonright L')$ is monochromatic.
\end{prop}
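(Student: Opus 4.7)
The plan is to reduce Proposition \ref{ramseyforblock} to the Nash--Williams style statement already recorded as Theorem \ref{Galvin prikry}, by encoding each block sequence as a single finite subset of $\nn$. Concretely, for $\mathbf{s} = (s_1,\dots,s_k) \in \mathrm{Bl}_k(\ff)$ set $u_{\mathbf{s}} = s_1 \cup \cdots \cup s_k$ and define
\[
\ff^{(k)} = \{u_{\mathbf{s}} : \mathbf{s} \in \mathrm{Bl}_k(\ff)\}.
\]
Since $\ff$ is thin, $s_1$ is the unique initial segment of $u_{\mathbf{s}}$ belonging to $\ff$; peeling off $s_1$ and iterating recovers $(s_2,\dots,s_k)$ from $u_{\mathbf{s}} \setminus s_1$. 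Hence the assignment $\mathbf{s} \mapsto u_{\mathbf{s}}$ is a bijection from $\mathrm{Bl}_k(\ff \upharpoonright L)$ onto $\ff^{(k)} \upharpoonright L$ for every infinite $L \subseteq \nn$. The same peeling-off argument shows that $\ff^{(k)}$ is itself thin: if $u \sqsubsetneq v$ were both in $\ff^{(k)}$, then their $\ff$-initial segments would both be initial segments of $v$ and, by thinness of $\ff$, would coincide with a common $s_1 = t_1$; induction on $k$ would then force $u = v$, contradicting the strict inclusion.

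Next I would verify that $\ff^{(k)}$ is large in every infinite $L \subseteq \nn$. Since $\ff$ is regular thin, it is large in $L$ by Corollary \ref{large_F}, so one picks $s_1 \in \ff \upharpoonright L$, then $s_2 \in \ff \upharpoonright (L \cap (\max s_1, \infty))$, and iterates $k$ times to produce a block sequence in $\ff \upharpoonright L$ whose union lies in $\ff^{(k)} \upharpoonright L$.

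Given a finite coloring $\mathrm{Bl}_k(\ff \upharpoonright L) = \bigcup_{i=1}^p \mathcal{P}_i$, transfer it via the bijection to $\ff^{(k)} \upharpoonright L = \bigcup_{i=1}^p \mathcal{H}_i$. A standard shrinking argument yields some index $i_0$ and an infinite $M \subseteq L$ in which $\mathcal{H}_{i_0}$ is large: if $\mathcal{H}_1$ fails to be large in $L$ pass to $L_1 \subseteq L$ avoiding $\mathcal{H}_1$, if $\mathcal{H}_2$ fails to be large in $L_1$ pass to $L_2 \subseteq L_1$ avoiding $\mathcal{H}_2$, and so on; after at most $p$ steps either some $\mathcal{H}_{i_0}$ was large along the way or else $L_p$ contradicts largeness of $\ff^{(k)}$ established above. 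Theorem \ref{Galvin prikry} then delivers $L' \subseteq M$ in which $\mathcal{H}_{i_0}$ is very large. To conclude $\ff^{(k)} \upharpoonright L' \subseteq \mathcal{H}_{i_0}$, take any $u \in \ff^{(k)} \upharpoonright L'$, extend it to an infinite $N \subseteq L'$ having $u$ as initial segment, and use very-largeness to find an initial segment $v \in \mathcal{H}_{i_0}$ of $N$; both $u$ and $v$ are then comparable initial segments of $N$ belonging to $\ff^{(k)}$, so thinness of $\ff^{(k)}$ forces $u = v$. Transferring back via the bijection gives $\mathrm{Bl}_k(\ff \upharpoonright L') \subseteq \mathcal{P}_{i_0}$, which is the desired conclusion. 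The only slightly delicate point is the verification of thinness of $\ff^{(k)}$; everything else is a routine reuse of the Nash--Williams / Galvin--Prikry machinery already invoked in the paper for Theorem \ref{ramseyforplegma}.
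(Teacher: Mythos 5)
Your argument is correct, and its core step is exactly the paper's: you encode a block sequence $(s_1,\dots,s_k)$ by the union $s_1\cup\dots\cup s_k$, use thinness of $\ff$ to see that this encoding is a bijection onto a family (your $\ff^{(k)}$, the paper's $\mathrm{UBl}_k(\ff\upharpoonright L)$) which is itself thin, and then apply a Ramsey theorem for thin families to the transferred coloring. Where you diverge is in which Ramsey input you invoke at the end: the paper first passes to an $L$ in which $\ff$ is very large and then quotes the Ramsey property for thin families as a black box (Nash--Williams \cite{NW}, Pudlak--Rodl \cite{PR}, or Proposition 2.6 of \cite{AKT2}), obtaining a monochromatic $\mathrm{UBl}_k(\ff\upharpoonright L')$ in one stroke; you instead re-derive that statement for $\ff^{(k)}$ from Theorem \ref{Galvin prikry} alone, by checking that $\ff^{(k)}$ is large in every infinite subset of $L$, running the standard shrinking argument over the finitely many color classes to find one that is large, upgrading it to very large via Theorem \ref{Galvin prikry}, and then using thinness of $\ff^{(k)}$ to conclude that the restriction is contained in that class. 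This buys you a self-contained proof relying only on results explicitly stated in the paper (at the cost of a page of routine verification), and it also makes the preliminary passage to a set where $\ff$ is very large unnecessary; the paper's version is shorter because the thin-family Ramsey theorem absorbs precisely the largeness/very-largeness bookkeeping you carry out by hand. One cosmetic remark: largeness of $\ff$ in every infinite set is the observation preceding Corollary \ref{large_F} (regular thin families are large in $\nn$, and largeness is hereditary), not Corollary \ref{large_F} itself, which asserts very largeness after passing to a subset; this does not affect your argument.
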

\begin{proof}
  By passing to an infinite subset of $L$ is necessary, we may assume that $\ff$ is very large in $L$.
  For every infinite subset $L'$ of $L$, we set
  \begin{equation}
    \label{eq040new}
    \mathrm{UBl}_k(\ff\upharpoonright L')=\Big\{\bigcup_{i=1}^k s_i:(s_i)_{i=1}^k\in\mathrm{Bl}_k(\ff\upharpoonright L')\Big\}.
  \end{equation}
  It is easy to observe that the family $\mathrm{UBl}_k(\ff\upharpoonright L)$ is a family of finite subsets of $\nn$ which is very large in $L$ and thin. Moreover, for every infinite subset $L'$ of $L$, the operator sending each $(s_i)_{i=1}^k$ from $\mathrm{Bl}_k(\ff\upharpoonright L')$ to $\bigcup_{i=1}^ks_i$ is 1-1 and onto $\mathrm{UBl}_k(\ff\upharpoonright L')$. So fixing a finite coloring on $\mathrm{Bl}_k(\ff\upharpoonright L)$ we induce a finite coloring on $\mathrm{UBl}_k(\ff\upharpoonright L)$. By the Ramsey property for thin families (see \cite{NW}, \cite{PR} or Proposition 2.6 from \cite{AKT2}) we pass to an infinite subset $L'$ of $L$ such that $\mathrm{UBl}_k(\ff\upharpoonright L')$ is monochromatic. Clearly, $\mathrm{Bl}_k(\ff\upharpoonright L')$ is monochromatic.
\end{proof}
Finally, let $\ff$ be a regular thin family and $t$ an element of $\widehat{\ff}$. We set
\begin{equation}
  \label{eq041new}
  \ff_{[t]}=\{s\in[\nn]^{<\infty}: \min s>\max t\text{ and }t\cup s\in\ff\}.
\end{equation}
It follows easily that $\ff_{[t]}$ is regular thin. We are ready to proceed to the proof of Lemma \ref{lem_cod_2}.
\begin{proof}[Proof of Lemma \ref{lem_cod_2}]
  Let $\ff$ be a regular thin family of order $\xi$.
  We define a subset $G$ of the product $[\nn]^\infty\times [\nn]^\infty\times X^{\widehat{\ff}}\times X^{\ff}$ as follows.
  We set $(M,L,(y_t)_{t\in\widehat{\ff}},(x_s)_{s\in\ff})$ in $G$ if the following are satisfied.
  \begin{enumerate}
    \item[(i)] There exists $C>0$ such that for every $k\in\nn$, every $(s_j)_{j=1}^k$ in $\mathrm{Plm}_k(\ff\upharpoonright L)$ with $s_1(1)\meg L(k)$ and every $a_1,...,a_k$ reals we have
        \begin{equation}
          \label{eq042new}
          C^{-1}\Big\|\sum_{j=1}^ka_jx_{s_j}\Big\|
          \mik\Big\|\sum_{j=1}^ka_ju_{M(j)}\Big\|
          \mik C\Big\|\sum_{j=1}^ka_jx_{s_j}\Big\|
        \end{equation}
        where $(u_n)_n$ is the standard unconditional Schauder basis of the universal space of Pe{\l}czy\'nski for the unconditional Schauder basic sequences.
    \item[(ii)] For every $s\in\ff\upharpoonright L$ we have $y_s=x_s$ and $y_\emptyset=0$.
    \item[(iii)] For every $t$ in $(\widehat{\ff}\upharpoonright L)\setminus \ff$, setting $L'=\{q\in L:q>\max t\}$, we have that for every $\ee>0$ there exists $n_0\in\nn$ such that for every $n\meg n_0$ and every block sequence $(t'_j)_{j=1}^n$ in $\ff_{[t]}\upharpoonright L$ with $\min t'_1\meg L'(n)$ we have
        \begin{equation}
          \label{eq043new}
          \Big\|\frac{1}{n}\sum_{j=1}^n y_{t\cup t'_j}-y_t\Big\|\mik\ee.
        \end{equation}
  \end{enumerate}
  It is easy to check that $G$ is a Borel subset of $[\nn]^\infty\times[\nn]^\infty\times X^{\widehat{\ff}_k}\times X^{\ff_k}$.
  We have the following claim.

  \noindent\textbf{Claim 1:} Let $(M,L,(y_t)_{t\in\widehat{\ff}},(x_s)_{s\in\ff})$ in $G$. Then every $\ff$-spreading model generated by an $\ff$-subsequence of $(x_s)_{s\in\ff\upharpoonright L}$ belongs to $SM_\xi^w(X)$ and it is equivalent to $(u_n)_{n\in M}$.
  \begin{proof}[Proof of Claim 1]
    First observe that the second part of the conclusion of Claim 1 is immediate by property (i) above.
    Let $\widehat{\varphi}:\widehat{\ff}\upharpoonright L\to X$ such that $\widehat{\varphi}(t)=y_t$ for all
    $t\in\widehat{\ff}\upharpoonright L$. We need to show that $\widehat{\varphi}$ is continuous, where $X$ is considered with the weak topology.
    By Lemma \ref{pushing_cont}, it suffices to show that for every $t\in \widehat{\ff}\upharpoonright L$ and every $(s_n)_n$ in $\ff\upharpoonright L$ convergent to $t$, we have that $y_{s_n}\stackrel{w}{\to}y_t$.
    Assume to the contrary that there exist $t\in \widehat{\ff}\upharpoonright L$, a sequence $(s_n)_n$ in $\ff\upharpoonright L$ convergent to $t$, an element $x^*$ in $X^*$ of norm $1$ and some $\ee>0$ such that $x^*(y_{s_n}-y_t)\meg2\ee$ for all $n$. Passing to a subsequence of $(s_n)_n$ if necessary, we may assume that each $s_n$ end-extends $t$ and $(s_n\setminus t)_n$ is a block sequence. We set $t_n=s_n\setminus t$ for all $n$. Then for every $n\in\nn$ we get that
    \begin{equation}
      \label{eq044new}
      \Big\|\frac{1}{n}\sum_{j=1}^ny_{s_{n+j}}-y_t\Big\|\meg x^*\Big(\frac{1}{n}\sum_{j=1}^ny_{s_{n+j}}-y_t\Big)\meg2\ee,
    \end{equation}
    which contradicts (iii) above. Hence $(x_s)_{s\in\ff\upharpoonright L}$ is subordinated. Moreover, by (ii), it follows that $\widehat{\varphi}(\emptyset)=0$ and therefore $(x_s)_{s\in\ff\upharpoonright L}$ is weakly null. The proof of the claim is complete.
  \end{proof}
  The converse of Claim 1 holds as well.

  \noindent\textbf{Claim 2:} For every element $(e_n)_n$ of
  $SM_\xi^w(X)$ there exists $(M,L,(y_t)_{t\in\widehat{\ff}},(x_s)_{s\in\ff})$ in $G$ such that the $\ff$-subsequence $(x_s)_{s\in\ff\upharpoonright L}$ generates $(e_n)_n$ as an $\ff$-spreading model.
  \begin{proof}[Proof of Claim 2]
    Fix $(e_n)_n$ in $SM_\xi^w(X)$. By the universality property of Pe{\l}czy\'nski's space, there exists an infinite subset $M$ of $\nn$ such that the sequences $(e_n)_n$ and $(u_n)_{n\in M}$ are equivalent. By Corollary \ref{generating_subord_sm}, there exist an infinite subset $P$ of $\nn$ and an $\ff$-sequence $(x_s)_{s\in\ff}$ such that the
    $\ff$-subsequence $(x_s)_{s\in\ff\upharpoonright P}$ is subordinated, weakly null and generates $(e_n)_n$ as an $\ff$-spreading model. For every $t\in\widehat{\ff}\upharpoonright P$ we set $y_t=\widehat{\varphi}(t)$ and we pick an arbitrary $y_t$ for every
    $t\in \widehat{\ff}\setminus(\widehat{\ff}\upharpoonright P)$.
    Clearly for every infinite subset $P'$ of $P$,
    $(M,P',(y_t)_{t\in\widehat{\ff}},(x_s)_{s\in\ff})$ satisfies (i)
    and (ii). It suffices to choose an infinite subset $L$ of $P$ such that $(M,L,(y_t)_{t\in\widehat{\ff}},(x_s)_{s\in\ff})$  satisfies (iii).
    First, let us observe that the following property holds true.
    \begin{enumerate}
      \item[($\mathcal{P}$)] For every infinite subset $P'$ of $P$ and every $t$ in $(\widehat{\ff}\upharpoonright M)\setminus \ff$,
      there exists an infinite subset $P''$ of $P'$ such that $\min P''>\max t$ and for every $n\in\nn$ and every
      block sequences $(t_j)_{j=1}^n,(t'_j)_{j=1}^n$ in $\ff_{[t]}\upharpoonright P''$ with
      $\min t_1,\min t'_1\meg P''(n)$ we have that
      \[\Bigg|\Big\|\frac{1}{n}\sum_{j=1}^ny_{t\cup t_j}-y_t\Big\|-\Big\|\frac{1}{n}\sum_{j=1}^ny_{t\cup t'_j}-y_t\Big\|\Bigg|\mik\frac{1}{n}.\]
    \end{enumerate}
  Indeed, let $P'$ and $t$ as above. We set $Q_0=P'$ and we inductively construct a decreasing
  sequence $(Q_n)_n$ of infinite subsets of $Q_0$ such that for every $n\in\nn$ we have that
  \begin{equation}
    \label{eq045new}
    \Bigg|\Big\|\frac{1}{n}\sum_{j=1}^ny_{t\cup t_j}-y_t\Big\|-\Big\|\frac{1}{n}\sum_{j=1}^ny_{t\cup t'_j}-y_t\Big\|\Bigg|\mik\frac{1}{n},
  \end{equation}
  for every choice of block sequences $(t_j)_{j=1}^n,(t'_j)_{j=1}^n$ in $\ff_{[t]}\upharpoonright Q_n$.
  Assume that for some $n\in\nn$ the sets $Q_0,...,Q_{n-1}$ have been chosen.
  Since $\widehat{\varphi}$ is continuous and $\widehat{\ff}\upharpoonright Q_{n-1}$ is compact, the set $\{y_s:s\in\widehat{\ff}\upharpoonright Q_{n-1}\}$ is weakly compact and therefore bounded. Let $C>0$ such that $\|y_s\|\mik C$ for all $s\in\widehat{\ff}\upharpoonright Q_{n-1}$. Let $(A_i)_{i=1}^{i_0}$ be a partition of $[0,C]$ into sets of diameter at most $1/n$. We define a finite coloring on $\mathrm{Bl}_n(\ff_{[t]}\upharpoonright Q_{n-1})$ as follows. We assign to a block sequence $(t_j)_{j=1}^n$ in $\ff_{[t]}\upharpoonright Q_{n-1}$ the color $i\in\{1,...,i_0\}$ if
  \begin{equation}
     \label{eq046new}
     \Big\|\frac{1}{n}\sum_{j=1}^{n}y_{t\cup t_j}-y_{t}\Big\|\in A_i.
  \end{equation}
  Applying Proposition \ref{ramseyforblock} we obtain an infinite subset $Q_n$ of $Q_{n-1}$ such that the set
  $\mathrm{Bl}_n(\ff_{[t]}\upharpoonright Q_n)$ is monochromatic. It is easy to check that $Q_n$ is as desired
  and the inductive step is complete. Pick an infinite subset $P''$ of $P'$ such that $P''(n)\in Q_n$ for all $n$.

  Set $L_0=P$ and inductively construct a decreasing sequence $(L_q)_q$ of infinite subsets of $P$ and a strictly increasing sequence $(l_q)_q$ in $P$ satisfying the following for every $q\in\nn$.
    \begin{enumerate}
      \item[(a)] $l_q=\min L_q$.
      \item[(b)] For every $t$ subset of $\{l_p:1\mik p\mik q-1\}$ belonging to $\widehat{\ff}\setminus \ff$, every $n\in\nn$ and every block sequences $(t_j)_{j=1}^n, (t'_j)_{j=1}^n$ in $\ff_l\upharpoonright L_q$ with $\min t_1,\min t'_1\meg L_q(n)$ we have that
      \begin{equation}
        \label{eq047new}
        \Bigg|\Big\|\frac{1}{n}\sum_{j=1}^ny_{t\cup t_j}-y_t\Big\|-\Big\|\frac{1}{n}\sum_{j=1}^ny_{t\cup t'_j}-y_t\Big\|\Bigg|\mik\frac{1}{n}.
      \end{equation}
    \end{enumerate}
    The inductive step of the construction is carried out as follows. Assume that for some $q\in\nn$ the
    sets $L_0,...,L_{q-1}$ are constructed. Let $\{t^r\}_{r=1}^N$ be an enumeration of
    the set $\big\{t\in\widehat{\ff}\setminus\ff:t\subseteq\{l_p:1\mik p\mik q-1\}\big\}$.
    Applying property $(\mathcal{P})$, we construct a decreasing
    sequence $(Q_r)_{r=1}^N$ of infinite subsets of $L_{q-1}\setminus\{l_{q-1}\}$ such that for every $n\in\nn$ and every block sequences $(t_j)_{j=1}^n, (t'_j)_{j=1}^n$ in $\ff_{[t]}\upharpoonright Q_r$ with $\min t_1,\min t'_1\meg Q_r(n)$ we have that
      \begin{equation}
        \label{eq048new}
        \Bigg|\Big\|\frac{1}{n}\sum_{j=1}^ny_{t^r\cup t_j}-y_{t^r}\Big\|-\Big\|\frac{1}{n}\sum_{j=1}^ny_{t^r\cup t'_j}-y_{t^r}\Big\|\Bigg|\mik\frac{1}{n}.
      \end{equation}
      Setting $L_q=Q_N$, the inductive step of the construction is complete.

      We set $L=\{l_q:q\in\nn\}$. Then
      for every $t$ in $(\widehat{\ff}\upharpoonright L)\setminus \ff$,
      every $n\in\nn$ and every block sequences $(t_j)_{j=1}^n, (t'_j)_{j=1}^n$ in $\ff_{[t]}\upharpoonright L'$, where $L'=\{q\in L:q\meg\max t\}$, with $\min t_1,\min t'_1\meg L'(n)$ it follows that
      \begin{equation}
        \label{eq049new}
        \Bigg|\Big\|\frac{1}{n}\sum_{j=1}^ny_{t\cup t_j}-y_t\Big\|-\Big\|\frac{1}{n}\sum_{j=1}^ny_{t\cup t'_j}-y_t\Big\|\Bigg|\mik\frac{1}{n}.
      \end{equation}
      In order to check that property (iii) is satisfied we fix some $t$ from
      $(\widehat{\ff}\upharpoonright L)\setminus \ff$.
      Let $L'=\{q\in L:q>\max t\}$. Let $(t_n)_n$ be a block sequence in $\ff_{[t]}\upharpoonright L'$. By the lemma's assumptions, the sequence $(y_{t\cup t_n}-y_t)_n$ admits no spreading model equivalent to the standard basis of $\ell^1$. Moreover, by the continuity of the map $\widehat{\varphi}$,  $(y_{t\cup t_n}-y_t)_n$ is weakly null. Hence, by a well known dichotomy of H.P. Rosenthal concerning Ces\`aro summability and $\ell^1$ spreading models, it follows that there exists a subsequence $(y_{t\cup t_{m_n}}-y_t)_n$ of $(y_{t\cup t_n}-y_t)_n$ which is Ces\`aro summable to zero. Hence
      \begin{equation}
        \label{eq050new}
        \begin{split}
          \lim_{n\to\infty}\Big\|\frac{1}{n}\sum_{j=1}^n & y_{t\cup t_{m_{n+j}}}-y_{t}\Big\| \\
          &\mik 2\lim_{n\to\infty}\Big\|\frac{1}{2n}\sum_{j=1}^{2n}y_{t\cup t_{m_{j}}}-y_{t}\Big\|
          -\lim_{n\to\infty}\Big\|\frac{1}{n}\sum_{j=1}^ny_{t\cup t_{m_{j}}}-y_{t}\Big\|=0.
        \end{split}
      \end{equation}
      Clearly, (iii) follows by \eqref{eq049new} and \eqref{eq050new}. The proof of the claim is complete.
  \end{proof}
  Let $A$ to be the projection of $G$ to the first coordinate, that is
  \begin{equation}
          \label{eq051new}
          \begin{split}
            A=\{M\in[\nn]^\infty:\;&\text{there exists}\;(L,(y_t)_{t\in\widehat{\ff}},(x_s)_{s\in\ff})\in[\nn]^\infty\times X^{\widehat{\ff}}\times X^{\ff}\;\\
          &\text{such that}(M,L,(y_t)_{t\in\widehat{\ff}},(x_s)_{s\in\ff})\in G\;\}.
          \end{split}
        \end{equation}
  Since $G$ is Borel, $A$ is analytic. It remains to check that $A$ satisfies (i) and (ii) of the lemma. Indeed, let $(e_n)_n$ in $SM_\xi^w(X)$. By Claim 2, there exists $(M,L,(y_t)_{t\in\widehat{\ff}},(x_s)_{s\in\ff})$ in $G$ such that the $\ff$-subsequence $(x_s)_{s\in\ff\upharpoonright L}$ generates $(e_n)_n$ as an $\ff$-spreading model. By the definition of $A$, $M$ belongs to $A$ and by property (i) the sequences $(e_n)_n$ and $(u_n)_{n\in M}$ are equivalent.
  Conversely, let $M\in A$. By the definition of $A$, there exist an infinite subset $L$ of $\nn$, an $\ff$-sequence $(x_s)_{s\in\ff}$ in $X$ and a family $(y_t)_{t\in\widehat{\ff}}$ of elements in $X$ such that
  $(M, L,(y_t)_{t\in\widehat{\ff}},(x_s)_{s\in\ff})$ belongs to $G$.
  We pass to an infinite subset $L'$ of $L$ such that the $\ff$-subsequence $(x_s)_{s\in\ff\upharpoonright L'}$ generates an $\ff$-spreading model $(e_n)_n$. By Claim 1 the sequences $(e_n)_n$ and $(u_n)_{n\in M}$ are equivalent
  and that $(e_n)_n$ belongs to $SM_\xi^w(X)$. The proof is complete.
\end{proof}

\end{document}